\numberwithin{equation}{section}
\newtheorem{theorem}{Theorem}[section]
\newtheorem{lemma}[theorem]{Lemma}
\newtheorem{proposition}[theorem]{Proposition}
\newtheorem{rem}[theorem]{Remark}
\begin{document}

\title{\Large{The Logarithmic Sobolev Inequality in Infinite dimensions  for Unbounded Spin Systems on the Lattice with non Quadratic Interactions.}}

\author{Ioannis Papageorgiou \thanks{   Department of Mathematics, Imperial College London, 180 Queen's Gate, London, SW7 2AZ. Email: ioannis.papageorgiou05@imperial.ac.uk  }
  }

\date{}
\maketitle
\begin{abstract}We are interested in the Logarithmic Sobolev Inequality for the infinite volume Gibbs measure with no quadratic interactions. We consider unbounded spin systems on the one dimensional Lattice with interactions that go beyond the usual strict convexity and without uniform bound on the second derivative. We assume that the one dimensional single-site measure with boundaries satisfies the Log-Sobolev inequality uniformly on the boundary conditions and we  determine conditions under which  the Log-Sobolev Inequality can be extended to the infinite volume Gibbs measure.
\end{abstract}

\noindent\textbf{Keywords}: Logarithmic Sobolev inequality, Gibbs measure, Infinite dimensions, Spin systems. 

~

\noindent\textbf{Mathematics Subject Classification (2000)}: 60E15,  26D10


%

\section{Introduction}
We are interested in the $q$ Logarithmic Sobolev Inequality  (LSq) for measures related to systems of unbounded spins on the one dimensional Lattice with nearest neighbour interactions that are not strictly convex. Suppose that the Log-Sobolev Inequality is true for the single site measure
with a constant uniformly bound on the boundary conditions. The aim of this paper
is to present a criterion under which the inequality can be extended to the  infinite  volume Gibbs measure. More specifically, we extend the already
 know results for interactions $V$ that satisfy $\left\Vert \nabla_i \nabla_j V(x_i,x_j) \right\Vert_{\infty}<\infty$ to the more general case of interactions
with
$\left\Vert \nabla_i \nabla_j V(x_i,x_j) \right\Vert_{\infty}=\infty$.

 Regarding the Log-Sobolev Inequality for the local specification
 $\{\mathbb{E}^{\Lambda,\omega}\}_{\Lambda\subset\subset \mathbb{Z}^d,\omega \in
\Omega}$ on a d-dimensional Lattice, criterions and examples of measures   $\mathbb{E}^{\Lambda,\omega}$ that satisfy
the Log-Sobolev -with a constant uniformly on the set $\Lambda$ and the boundary
conditions $\omega-$ are investigated in [Z2], [B-E], [B-L], [Y] and [B-H].  For  
 
  $\left\Vert \nabla_i \nabla_j V(x_i,x_j) \right\Vert_{\infty}<\infty$ the Log-Sobolev is proved  when the phase
$\phi$ is strictly convex and convex at infinity. Furthermore, in [G-R]
the Spectral Gap Inequality is proved to be true for phases beyond the convexity at infinity, while in [M-M] and [B-J-S] the Decay of Correlation is studied.

For
the measure  $\mathbb{E}^{\{i\},\omega}$ on the real line, necessary
and sufficient  conditions are presented in  [B-G], [B-Z] and [R-Z], so that
  the Log-Sobolev Inequality is satisfied uniformly on the boundary
conditions $\omega$.

The problem
of the Log-Sobolev inequality for the Infinite dimensional Gibbs measure on the
Lattice is examined
in [G-Z], [Z1] and [Z2]. The first two study the LS for measures
on a d-dimensional Lattice for bounded spin systems, while the third one
looks at continuous spins systems on the one dimensional Lattice.

In [M] and [O-R],  criterions are presented  in order to pass from the Log-Sobolev
Inequality for the single-site measure $\mathbb{E}^{\{i\},\omega}$
to the LS2 for the Gibbs measure $\nu_{N}$ on a finite N-dimensional product space. Furthermore, using these criterions  one can conclude  the Log-Sobolev Inequality for the family $\{\nu_N,N\in\mathbb{N}\}$ with a constant uniformly
on $N$. Concerning the same problem for the LSq ($q\in (1,2]$) inequality in the case of Heisenberg groups with
quadratic interactions in 
[I-P]  a similar criterion is presented for the Gibbs  measure based on the methods
developed in [Z1] and [Z2].

All the pre mentioned developments refer to measures with interactions $V$
that satisfy 
$\left\Vert \nabla_i \nabla_j V(x_i,x_j) \right\Vert_{\infty}<\infty$. The question
that arises is whether similar assertions can be  verified for the infinite
dimensional Gibbs
measure in the case where  $\left\Vert \nabla_i \nabla_j V(x_i,x_j) \right\Vert_{\infty}= \infty$ and in this paper we present a strategy to solve this problem.

Consider the one dimensional measure
$$\mathbb{E}^{\{i\},\omega}(dx_{i})=\frac{e^{-
 \phi(x_i)-\sum_{j\sim
i}J_{ij}V(x_i
,\omega_j)}dX_i} {Z^{\{i\},\omega}} \text{\; with \;} \left\Vert\partial_x \partial_y V(x,y) \right\Vert_{\infty}= \infty$$          
 Assume that $\mathbb{E}^{\{i\},\omega}$ satisfies the (LS) inequality with a constant
uniformly on $\omega
$.
 Our aim is to set conditions, so that the infinite volume
Gibbs measure  $\nu$ for the local specification $\{\mathbb{E}^{\Lambda,\omega}\}_{\Lambda\subset\subset \mathbb{Z},\omega \in
\Omega}$    satisfies the LS inequality. We will focus on  measures
on the the one dimensional Lattice,  but our result can also be easily extended
on trees.   

 Our general setting is as follows:
 
 \textit{The Lattice.} When we  refer to the  Lattice we mean
 the 1-dimensional 
Lattice $\mathbb{Z}$. 
 
\textit{The Configuration space.}
We consider continuous unbounded random variables in $\mathbb{R}$, representing spins. Our configuration space is $\Omega=\mathbb{R}^{\mathbb{Z}}$. For any
$\omega\in \Omega $ and $\Lambda \subset \mathbb{Z}$ we denote
$$\omega=(\omega_i)_{i\in \mathbb{Z}}, \omega_{\Lambda}=(\omega_i)_{i\in\Lambda},\omega_{\Lambda^c}=(\omega_i)_{i\in\Lambda^c} \text{\; and \;}\omega=\omega_{\Lambda}\circ\omega_{\Lambda^c}$$
where $\omega_i \in \mathbb{R}$. When $\Lambda=\{i\}$ we will write $\omega_i=\omega_{\{i\}}$.
Furthermore, we will write $i\sim j$ when the nodes $i$ and $j$     are nearest neighbours, that means, they are connected with a vertex,
while
 we will denote the set of the neighbours of $k$ as   $\{\sim k\}=\{r:r \sim k\}$.
 
\textit{The functions of the configuration.}
We consider integrable functions $f$ that depend on a finite set of variables
$\{x_i\}, i\in{\Sigma_f}$ for a finite subset $\Sigma_f\subset\subset \mathbb{Z}$. The symbol $\subset\subset$ is used to denote a finite subset.

\textit{The Measure on $\mathbb{Z}$.}
For any subset $\Lambda\subset\subset
\mathbb{Z}$ we define the probability measure
$$\mathbb{E}^{\Lambda,\omega}(dx_\Lambda) =
\frac{e^{-H^{\Lambda,\omega}}dx_\Lambda} {Z^{\Lambda,\omega}}$$ 
  where
 \begin{itemize}
\item $x_{\Lambda}=(x_i)_{i\in\Lambda}$ and $dx_\Lambda=\prod_{i\in\Lambda}dx_i$

\item $Z^{\Lambda,\omega}=\int e^{-H^{\Lambda,\omega}}dx_\Lambda$
\item
$H^{\Lambda,\omega}=\sum_{i\in\Lambda}
\phi (x_i)+\sum_{i\in\Lambda,j\sim
i}J_{ij}V(x_i
,z_j)$
 
\end{itemize}
\noindent
and
\begin{itemize}
\item
$z_j=x_{\Lambda}\circ\omega_{\Lambda^c}=\begin{cases}x_j & ,i\in\Lambda  \\
\omega_j & ,i\notin\Lambda \\
\end{cases}$
\end{itemize}We call $\phi$ the phase and $V$ the potential of the interaction.
 For convenience we will frequently omit the boundary symbol from the measure and will
write $\mathbb{E}^{\Lambda}\equiv\mathbb{E}^{\Lambda,\omega}$.
 
\textit{The  Infinite Volume Gibbs Measure.}   The Gibbs measure $\nu$ for the local specification
$\{\mathbb{E}^{\Lambda,\omega}\}_{\Lambda\subset \mathbb{Z},\omega \in \Omega}$ is defined as the  probability measure which solves the Dobrushin-Lanford-Ruelle (DLR) equation
 $$\nu \mathbb{E}^{\Lambda,\star}=\nu $$
for  finite  sets $\Lambda\subset \mathbb{Z}$ (see [P]). For conditions on
the existence and uniqueness of the Gibbs measure see e.g. [B-HK]
and [D]. In this paper we consider  local specifications      for which the Gibbs measure  exists and it is unique. It should be noted that $\{\mathbb{E}^{\Lambda,\omega}\}_{\Lambda\subset\subset \mathbb{Z},\omega \in
\Omega}$ always satisfies the DLR equation, in the sense that
$$\mathbb{E}^{\Lambda,\omega}\mathbb{E}^{M,\ast}=\mathbb{E}^{\Lambda,\omega}$$ for every $M\subset\Lambda$. [P].

\textit{The gradient $\nabla$ for continuous spins systems.}
\noindent
For any subset $\Lambda\subset \mathbb{Z}$ we define the gradient
 $$\left\vert \nabla_{\Lambda} f
\right\vert^q=\sum_{i\in\Lambda}\left\vert \nabla_{i} f
\right\vert^q\text{,\; where \;}\nabla_{i}=\frac{\partial}{\partial x_i} $$
 When $\Lambda=\mathbb{Z}$ we will simply write $\nabla=\nabla_\mathbb{Z}$. We denote
$$\mathbb{E}^{\Lambda,\omega}f=\int f d\mathbb{E}^{\Lambda,\omega}(x_\Lambda)$$
 We can  define the following inequalities
 
\textit{The $q$ Log-Sobolev Inequality  (LS$_q$).}
 We say that the measure $\mathbb{E}^{\Lambda,\omega}$ satisfies the q Log-Sobolev  Inequality for $q\in (1,2]$, if there exists a constant $C_{LS}$   such that for any function $f$, the following  holds
$$\mathbb{E}^{\Lambda,\omega}\left\vert f\right\vert^qlog\frac{\left\vert f\right\vert^q}{\mathbb{E}^{\Lambda,\omega}\left\vert f\right\vert^q}\leq C_{LS}
\mathbb{E}^{\Lambda,\omega}\left\vert \nabla_{\Lambda} f
\right\vert^q$$  
with a constant $ C_{LS}\in(0,\infty)$  uniformly on the set $\Lambda$  and the boundary conditions
$\omega$.
 
\textit{The $q$ Spectral Gap Inequality.} We say that the measure $\mathbb{E}^{\Lambda,\omega}$ satisfies the q Spectral Gap  Inequality for $q\in (1,2]$, if there exists a constant $C_{SG}$   such that for any function $f$, the following  holds
$$\mathbb{E}^{\Lambda,\omega}\left\vert f-\mathbb{E}^{\Lambda,\omega}f \right\vert^q\leq C_{SG}
\mathbb{E}^{\Lambda,\omega}\left\vert \nabla_{\Lambda} f
\right\vert^q$$
with a constant $C_{SG}\in(0,\infty)$  uniformly on the set $\Lambda$  and the boundary conditions
$\omega$.
\begin{rem}\label{rem1.1} We will frequently use the following two well known properties about the
 Log-Sobolev  and the Spectral Gap Inequality. If the probability measure $\mu$ satisfies
the Log-Sobolev Inequality with constant $c$ then it also satisfies the Spectral Gap Inequality with a constant   $\hat c= \frac{4 c}{\log 2}$.
More detailed, in the case where $q=2$ the optimal constant is less or equal to $\frac{c}{2}<\hat c$, while  in the case $1<q<2$ it is less or equal to $\frac{4c}{\log 2}$. The constant $\hat c$ does not depend on the value of  the parameter $q\in(1,2]$.  

Furthermore, if for a family $I$
of sets  $\Lambda_i \subset \mathbb{Z}$,
$dist(\Lambda_i,\Lambda_j)>1 \ , i\neq j$    the measures $\mathbb{E}^{\Lambda_{i},\omega},
i\in I$
satisfy the Log-Sobolev Inequality with constants $c_i,i\in I$,  then the
probability measure $\mathbb{E}^{\{\cup_{i\in I}\Lambda_i\},\omega}$ also satisfies
the (LS) Inequality with constant $c=max_{i\in I} c_i$. The last result is also true for the Spectral Gap Inequality. The
 proofs of these two properties can be found in  [G] and   [G-Z] for $q=2$
 and in [B-Z] for $1<q<2$. \end{rem}

 \section{The Main Result}
 
 We want to extend the Log-Sobolev Inequality from the single-site measure $\mathbb{E}^{\{i\},\omega}$ to the Gibbs measure for the local specification $\{\mathbb{E}^{\Lambda,\omega}\}_{\Lambda\subset\subset \mathbb{Z},\omega \in
\Omega}$  on   the entire one dimensional Lattice.

~
 
 \noindent
\textbf{Hypothesis}
 \noindent We consider four main hypothesis:
 
~
 
\noindent
\textbf{(H0)}: The one dimensional measures $\mathbb{E}^{i,\omega}$ satisfies
the Log-Sobolev-q Inequality with a constant $c$ uniformly with respect to the boundary
conditions $\omega$.

~
 
\noindent
\textbf{(H1)}: The restriction  $\nu_{\Lambda(k)}$ of the Gibbs measure $\nu$
to the $\sigma-$algebra $\Sigma_{\Lambda(k)}$,    $$\Lambda(k)=\{k-2,k-1,k,k+1,k+2\}$$
\ \ \ \   \ \ \ \ \  \ satisfies the Log-Sobolev-q Inequality with a constant $C\in (0,\infty)$.
 
 ~

 \noindent
\textbf{(H2)}:
For some $\epsilon>0$ and $K>0$ $$\nu_{\Lambda(i)} e^{2^{q+2}\epsilon 
V(x_r,x_s)}\leq e^K\text{\; and \;} \nu_{\Lambda(i)} e^{2^{q+2}\epsilon \left\vert\nabla_{r}V(x_{r},x_{s})\right\vert^q}\leq
e^K$$
 for $r,s\in\{i-2,i-1,i,i+1,i+2\}$

\noindent
\textbf{(H3)}:
The coefficients $J_{i,j}$ are such that $\left\vert J_{i,j}\right\vert\in[0,J]$
for some $J<1$ sufficiently
 
\ \ \ \  \ \ \ small.
 
 \begin{rem}\label{rem2.1n} From Hypothesis $(H2)$ and Jensen's inequality
 it follows that   $$\nu e^{\epsilon(\left\vert F(r)\right\vert+\mathbb{E}^{S(r),\omega}\left\vert F(r)\right\vert)^q}\leq
e^K,\text{\; for \;}r=i-2,i-1,i,i+1,i+2$$where the functions $F(r)$ are defined by
$$ F(r)=\begin{cases}\nabla_{r}V(x_{i-1},x_{i})+\nabla_{r}V(x_{i+1},x_{i})& \text{\; for \;}r=i-1,i,i+1  \\
\nabla_{r}V(x_s,x_r)\mathcal{I}_{s\sim r:s\in\{i-3,i+3\}} & \text{\; for \;}r=i-2,i+2  \\
\end{cases}
$$
 and the sets $S(r)$  by 
 $$S(r)=\begin{cases}\{\sim i\} & \text{\; for \;}r=i-1,i,i+1 \\
\{i+3,i+4,...\} & \text{\; for \;}r=i+2 \text{\;and\;}s=i+3\\
\{...,i-4,i-3\} & \text{\; for \;}r=i-2 \text{\;and\;}s=i-3 \\
\end{cases}$$ These bounds will be frequently used through out the paper.
\end{rem} 
\begin{rem}\label{newremark1} Throughout this paper we will consider differentiable functions that satisfy $$\nu\left\vert f\right\vert^q <\infty \text{\; and \;} \nu \left\vert \nabla f\right\vert^q <\infty$$
\end{rem}
 The main theorem  follows.
  \begin{theorem}\label{thm2.1} If hypothesis (H0)-(H3) are satisfied, then the infinite dimensional Gibbs measure $\nu$  for the local specification $\{\mathbb{E}^{\Lambda,\omega}\}_{\Lambda\subset\subset \mathbb{Z},\omega \in
\Omega}$ satisfies the $q$ Log-Sobolev  inequality
$$\nu \left\vert f\right\vert^q log\frac{\left\vert f\right\vert^q}{\nu \left\vert f\right\vert^q}\leq \mathfrak{C} \ \nu \left\vert \nabla f
\right\vert^q$$                              
for some positive constant $\mathfrak{C}$. \end{theorem}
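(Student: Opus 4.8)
The plan is to adapt the martingale / sweeping-out method developed for bounded interactions (as in [M], [O-R], [Z1], [Z2]) to the present non-quadratic setting, the new ingredient being that the missing $L^\infty$ control of $\nabla_i\nabla_j V$ is compensated by the exponential-integrability estimates of (H2) and Remark~\ref{rem2.1n}, combined with an entropy-absorption argument driven by the smallness of $J$ in (H3). Concretely, I would split $\mathbb{Z}$ into its even and odd sublattices $E$ and $O$. Since the interaction is nearest-neighbour, integrating out the even coordinates with the odd ones frozen factorizes completely: the conditional expectation $\mathbb{E}^{E,\omega}$ equals $\bigotimes_{i\in E}\mathbb{E}^{\{i\},\omega}$, and because consecutive even sites sit at lattice distance $2>1$, hypothesis (H0) together with the tensorization property of Remark~\ref{rem1.1} shows that $\mathbb{E}^{E,\omega}$ satisfies the $\mathrm{LS}_q$ inequality with the single constant $c$, uniformly in $\omega$; the same holds for $\mathbb{E}^{O,\omega}$. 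Write $\mathbb{E}^0$ and $\mathbb{E}^1$ for the even- and odd-sublattice conditional expectation operators.

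Set $f_n=\mathbb{E}^{i_{n-1}}\cdots\mathbb{E}^{i_0}|f|^q$, where $i_0,i_1,\dots$ alternate between $0$ and $1$. The DLR identities $\nu\mathbb{E}^0=\nu\mathbb{E}^1=\nu$ give, by a standard telescoping,
\[
\nu|f|^q\log\frac{|f|^q}{\nu|f|^q}=\sum_{n\ge 0}\nu\Big(\mathbb{E}^{i_n}[f_n\log f_n]-(\mathbb{E}^{i_n}f_n)\log(\mathbb{E}^{i_n}f_n)\Big),
\]
which is legitimate once $f_n\to\nu|f|^q$ and the series converges; this I would obtain from uniqueness of the Gibbs measure and the decay of correlations forced by $J$ small. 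Each summand is the $\nu$-average of the relative entropy of $f_n$ under $\mathbb{E}^{i_n}$; applying the $\mathrm{LS}_q$ inequality for $\mathbb{E}^{i_n}$ with test function $f_n^{1/q}$ bounds it by $c\,\nu\,\mathbb{E}^{i_n}|\nabla_{i_n}f_n^{1/q}|^q$, so it remains to estimate $\sum_{n\ge 0}\nu\,\mathbb{E}^{i_n}|\nabla_{i_n}f_n^{1/q}|^q$.

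This is the crux. Since $f_n=\mathbb{E}^{i_{n-1}}f_{n-1}$, for a site $j$ in the sublattice carrying $\nabla_{i_n}$ one has
\[
\nabla_j f_n=\mathbb{E}^{i_{n-1}}\nabla_j f_{n-1}-\mathrm{Cov}_{\mathbb{E}^{i_{n-1}}}\big(f_{n-1},\,\nabla_j H^{i_{n-1}}\big),\qquad \nabla_j H^{i_{n-1}}=\sum_{k\sim j}J_{jk}\,\nabla_j V(x_j,x_k),
\]
the sum running over the neighbours $k$ in the integrated sublattice. The first term propagates the derivative one step back, and iterating over $n$ it rebuilds a term comparable to $\nu|\nabla f|^q$. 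The second term is where the unboundedness bites: the classical route of bounding the covariance by the spectral gap (Remark~\ref{rem1.1}) and $\|\nabla_i\nabla_j V\|_\infty$ is unavailable. Instead I would use the Gibbs variational principle in the form $\mu(\varphi\psi)\le t^{-1}\big(\mathrm{Ent}_\mu(\varphi)+\mu(\varphi)\log\mu(e^{t\psi})\big)$, with $\varphi$ a suitable power of $f_{n-1}$ and $\psi$ a small multiple of $\nabla_j V(x_j,x_k)$ (and, for the terms arising from differentiating the outer power $f_n^{1/q}$, a parallel treatment of $V$ itself), together with Hölder's inequality to separate the $|\cdot|^q$-nonlinearity. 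Here the exponential moments $\nu_{\Lambda(i)}e^{2^{q+2}\epsilon|\nabla_r V(x_r,x_s)|^q}\le e^K$ and $\nu_{\Lambda(i)}e^{2^{q+2}\epsilon V(x_r,x_s)}\le e^K$ of (H2) --- equivalently the bounds $\nu\,e^{\epsilon(|F(r)|+\mathbb{E}^{S(r),\omega}|F(r)|)^q}\le e^K$ of Remark~\ref{rem2.1n} involving the functions $F(r)$ and the half-line sets $S(r)$ --- are exactly what is needed to absorb the exponential factors this produces, the constant $2^{q+2}$ being dictated by the Hölder exponents forced by the $q$-th power. The residual interaction terms connecting a block to its exterior are then closed by means of (H1), i.e. the $\mathrm{LS}_q$ inequality for the marginal $\nu_{\Lambda(k)}$, which converts the surviving entropy pieces into contributions bounded by $\nu|\nabla f|^q$.

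Collecting everything, each interaction term is bounded by (i) a fixed multiple of $J$ times an entropy term which, after re-summation over $n$, feeds back into $\mathrm{Ent}_\nu(|f|^q)$, plus (ii) harmless contributions controlled by $K/\epsilon$ and by $\nu|\nabla f|^q$. Summing over $n$ and over the at most two neighbours of each vertex, the coefficient in front of the reconstructed $\mathrm{Ent}_\nu(|f|^q)$ is at most a universal constant times $J$, hence strictly less than $1$ by (H3); transposing it to the left and dividing yields $\nu|f|^q\log(|f|^q/\nu|f|^q)\le\mathfrak{C}\,\nu|\nabla f|^q$ with $\mathfrak{C}$ depending only on $c$, $C$, $K$, $\epsilon$, $q$ and $J$. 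I expect the main obstacle to be precisely the estimate of the covariance/interaction terms: with no uniform bound on $\nabla_i\nabla_j V$ one must trade each such term for an entropy term via the variational principle, and this closes only if the exponential integrability supplied by (H2) is quantitatively matched to the Hölder exponents coming from the $q$-norm and if (H3) keeps the resulting self-feedback subcritical. A secondary technical point is the rigorous justification, in infinite volume, of the convergence $f_n\to\nu|f|^q$ and of the telescoping identity, which relies on uniqueness of the Gibbs measure and on the decay of correlations implied by the smallness of $J$.
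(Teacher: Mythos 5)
Your strategy coincides with the paper's: the even/odd decomposition of $\mathbb{Z}$, tensorization of (H0) over each sublattice via Remark~\ref{rem1.1}, the Zegarlinski telescoping of the entropy along the alternating operator $\mathcal{P}=\mathbb{E}^{\Gamma_1}\mathbb{E}^{\Gamma_0}$, the sweeping-out identity expressing $\nabla_j$ of a conditional expectation as the conditional expectation of $\nabla_j f$ minus a covariance with $\nabla_j V$, the replacement of the classical $\left\Vert \nabla_i\nabla_j V\right\Vert_\infty$ covariance bound by the Deuschel--Stroock entropic inequality fed with the exponential moments of (H2), and the convergence $\mathcal{P}^n f\to\nu f$ justified through the spectral gap and the smallness of $J$. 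All of this matches the proof of Theorem~\ref{thm2.1} together with Propositions~\ref{prp2.2} and~\ref{prp2.3}.

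The gap is in the step where you claim that (H1) ``converts the surviving entropy pieces into contributions bounded by $\nu\vert\nabla f\vert^q$.'' It does not do so in one pass. Applying the entropic inequality and then the $\mathrm{LS}_q$ of $\nu_{\Lambda(k)}$ produces the quantity $Q(k,k)=\nu_{\Lambda(k)}\bigl\vert \nabla_{\Lambda(k)}\bigl(\mathbb{E}^{M(k)}\vert h_k\vert^q\bigr)^{1/q}\bigr\vert^q$, i.e.\ a gradient of a conditional expectation over the \emph{exterior} of the block $\Lambda(k)$. The derivatives at the boundary sites $k\pm2$ hit the Hamiltonian of $\mathbb{E}^{M(k)}$ through $V(x_{k\pm3},x_{k\pm2})$ and regenerate covariance terms of exactly the type you started with, now attached to the blocks $\Lambda(k\pm4)$; bounding those produces $Q(k\pm8,k)$, and so on indefinitely. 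Closing the estimate therefore requires an infinite spatial recursion of the form $\tilde Q(n)\le\tilde G(n)+J^qK'\tilde Q(n-4)+J^qK'\tilde Q(n+4)$, an a priori bound $\sup_n\tilde Q(n)<\infty$ to legitimize its iteration, and a resummation with geometric weights $J^{n(q-1)}$ --- this is the content of Lemma~\ref{lem3.2} and essentially all of Section~5 of the paper, and it is where a second, independent use of the smallness of $J$ enters. As written, your plan stalls at this point: without the recursion, $Q(k,k)$ is not controlled by $\nu\vert\nabla f\vert^q$ at all, and the self-feedback you intend to make subcritical is not even expressible as a multiple of the entropy. The rest of your outline (the roles of (H0), (H2), (H3), the telescoping, and the limit $n\to\infty$) is sound and agrees with the paper.
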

\begin{proof} For the proof of the theorem it is sufficient to consider  $f\geq 0$. This
is an assumption that we will make through all the proofs presented in this paper. We want to extend the Log-Sobolev Inequality from the single-site measure $\mathbb{E}^{\{i\},\omega}$ to the Gibbs measure for the local specification $\{\mathbb{E}^{\Lambda,\omega}\}_{\Lambda\subset\subset \mathbb{Z},\omega \in
\Omega}$  on   the entire one dimensional lattice. To do so, we will follow the iterative method developed by Zegarlinski in [Z1] and [Z2]. Define the following sets
$$\Gamma_0=   \text{\;even integers, \;} \Gamma_1=\mathbb{Z}\smallsetminus\Gamma_0$$
One can notice that $\{dist(i,j)>1, \ \forall i,j \in\Gamma_k,k=0,1\}$, $\Gamma_0\cap\Gamma_1=\emptyset$  and    $\mathbb{Z}=\Gamma_0\cup\Gamma_1$. For convenience we will write $\mathbb{E}^{\Gamma_i}=\mathbb{E}^{\Gamma_i,\omega}$ for $i=0,2$.We will    denote
$$ \mathcal{P}=\mathbb{E}^{\Gamma_1}\mathbb{E}^{\Gamma_{0}}$$
   In order to prove the Log-Sobolev Inequality for the measure $\nu$, we will express the entropy with respect to the measure $\nu$ as the sum of the entropies of the measures   $\mathbb{E}^{\Gamma_0}$  and $\mathbb{E}^{\Gamma_1}$ which are easier to handle.  We can write
 \begin{align}\nonumber \nu (f^q log\frac{f^q}{\nu f^q})=&\nu\mathbb{E}^{\Gamma_0} (f^q log\frac{f^q}{\mathbb{E}^{\Gamma_0} f^q})+\nu\mathbb{E}^{\Gamma_{1}} (\mathbb{E}^{\Gamma_0}f^q log\frac{\mathbb{E}^{\Gamma_0}f^q}{\mathbb{E}^{\Gamma_{1}}\mathbb{E}^{\Gamma_0} f^q})\\ & \label{2.1}+
\nu (\mathbb{E}^{\Gamma_{1}}\mathbb{E}^{\Gamma_{0}}f^q log\mathbb{E}^{\Gamma_{1}}\mathbb{E}^{\Gamma_{0}}f^q)-\nu
 (f^q log \nu f^{q})\end{align}
According to hypothesis (H0), the Log-Sobolev Inequality is satisfied for the single-state measures $\mathbb{E}^{\{j\}}$ and the sets $\Gamma_0$ and $\Gamma_1$   are unions of one dimensional sets of distance greater than the length of the interaction one. Thus, as we mentioned
in Remark~\ref{rem1.1} in the introduction, the (LS) holds for the product measures $\mathbb{E}^{\Gamma_0}$
and $\mathbb{E}^{\Gamma_1}$  with the same constant c. If we use the LS for $\mathbb{E}^{\Gamma_i},i=0,1$  we get
 \begin{align}\nonumber ~\eqref{2.1}\leq & c\nu(\mathbb{E}^{\Gamma_0}\left\vert \nabla_{\Gamma_0} f
\right\vert^q)+c\nu \mathbb{E}^{\Gamma_1}\left\vert \nabla_{\Gamma_1}(\mathbb{E}^{\Gamma_0} f^q)^{\frac{1}{q}}
\right\vert^q\\ &\label{2.2}+\nu (\mathbb{E}^{\Gamma_{1}}\mathbb{E}^{\Gamma_{0}}f^q log\mathbb{E}^{\Gamma_{1}}\mathbb{E}^{\Gamma_{0}}f^q)-\nu (f^q log \nu f^q)\end{align}
For the third term of~\eqref{2.2} we can write
\begin{align}\nonumber\nu (\mathcal{P}f^q log \mathcal{P} f^q)=&\nu \mathbb{E}^{\Gamma_{0}}(\mathcal{P}f^q log \frac{\mathcal{P} f^q}{\mathbb{E}^{\Gamma_{0}}\mathcal{P} f^q})+\nu \mathbb{E}^{\Gamma_{1}}(\mathbb{E}^{\Gamma_{0}}\mathcal{P}f^q log \frac{\mathbb{E}^{\Gamma_{0}}\mathcal{P} f^q}{\mathbb{E}^{\Gamma_{1}}\mathbb{E}^{\Gamma_{0}}\mathcal{P} f^q})\\  & \nonumber
+\nu( \mathbb{E}^{\Gamma_{1}}\mathbb{E}^{\Gamma_{0}}\mathcal{P}f^q log \mathbb{E}^{\Gamma_{1}}\mathbb{E}^{\Gamma_{0}}\mathcal{P}f^q )\end{align}
If we use again the Log-Sobolev Inequality  for the measures $\mathbb{E}^{\Gamma_{i}},i=0,1$
we get
 \begin{equation}\label{2.3}\nu (\mathcal{P}f^qlog \mathcal{P} f^q)\leq c \nu\left\vert \nabla_{\Gamma_0}(\mathcal{P}f^q)^\frac{1}{q}
\right\vert^q+c \nu\left\vert \nabla_{\Gamma_1}(\mathbb{E}^{\Gamma_{0}} \mathcal{P}f^q)^\frac{1}{q}
\right\vert^q+\nu (\mathcal{P}^2f^q log \mathcal{P}^2f^q) \end{equation}
If we work similarly for the last term $\nu (\mathcal{P}^2f^q log \mathcal{P}^2f^q)$  of~\eqref{2.3} and  inductively for any term  $\nu (\mathcal{P}^kf^q log \mathcal{P}^kf^q)$, then after $n$ steps~\eqref{2.2} and~\eqref{2.3} will give
\begin{align}\nonumber\nu (f^q log\frac{f^q}{\nu f^q})\leq &\nu (\mathcal{P}^n f^q \log\mathcal{P}^n f^q)-\nu (f^qlog \nu f^q)+c\nu \left\vert \nabla_{\Gamma_0}f\right\vert^q\\  &
\label{2.4}+c \sum_{k=1}^{n-1} \nu \left\vert \nabla_{\Gamma_0}( \mathcal{P}^kf^q)^\frac{1}{q}
\right\vert^q+c \sum_{k=0}^{n-1} \nu\left\vert \nabla_{\Gamma_1}(\mathbb{E}^{\Gamma_{0}} \mathcal{P}^kf^q)^\frac{1}{q}
\right\vert^q \end{align}
 In order to calculate the fourth and fifth term on the right-hand side of~\eqref{2.4} we will use the following proposition
\begin{proposition}\label{prp2.2}Suppose  that hypothesis (H0)-(H3) are satisfied.
Then the following bound holds
\begin{equation} \label{2.5}\nu\left\vert \nabla_{\Gamma_i}(\mathbb{E}^{\Gamma_{j}} \vert f\vert^q)^\frac{1}{q}
\right\vert^q\leq C_1\nu\left\vert \nabla_{\Gamma_i}f\right\vert^q+C_2\nu\left\vert \nabla_{\Gamma_j}f\right\vert^q\end{equation}
for $\{i,j\}=\{0,1\}$ and  constants $C_1\in (0,\infty)$ and $0<C_2<1$.\end{proposition}
The proof of Proposition~\ref{prp2.2} will be the subject of Section 4.
 If we apply inductively relationship~\eqref{2.5}
  k times to  the  fourth and the fifth term of~\eqref{2.4} we obtain
 \begin{equation} \label{2.6} \nu\left\vert \nabla_{\Gamma_0}(\mathcal{P}^kf^q)^\frac{1}{q}
\right\vert^q \leq C_2^{2k-1}C_1\nu\left\vert \nabla_{\Gamma_1} f
\right\vert^q+C_2^{2k}\nu\left\vert \nabla_{\Gamma_0} f
\right\vert^q\end{equation}
     and
\begin{equation}\label{2.7}\nu\left\vert \nabla_{\Gamma_1}(\mathbb{E}^{\Gamma_{0}}\mathcal{P}^kf^q)^\frac{1}{q}
\right\vert^q \leq C_2^{2k}C_1\nu\left\vert \nabla_{\Gamma_1} f
\right\vert^q+C_2^{2k+1}\nu\left\vert \nabla_{\Gamma_0} f
\right\vert^q\end{equation}
If we plug~\eqref{2.6} and~\eqref{2.7} in~\eqref{2.4} we get 
\begin{align}\nonumber\nu (f^qlog\frac{f^q}{\nu f^q})\leq&\nu (\mathcal{P}^n f^q log \mathcal{P}^n f^q)-\nu (f^qlog \nu f^q)\\ \nonumber
 &+c(\sum_{k=0}^{n-1}C_2^{2k-1})C_1\nu\left\vert \nabla_{\Gamma_1} f
\right\vert^q+c(\sum_{k=0}^{n-1}C_2^{2k})\nu\left\vert \nabla_{\Gamma_0} f
\right\vert^q\\ &
 \label{2.8}+c(\sum_{k=0}^{n-1}C_2^{2k})C_1\nu\left\vert \nabla_{\Gamma_1} f
\right\vert^q+c(\sum_{k=0}^{n-1}C_2^{2k+1})\nu\left\vert \nabla_{\Gamma_0} f
\right\vert^q\end{align}
 If we take the limit of $n$  to infinity in~\eqref{2.8} the first two terms on
 the right hand side cancel with each other,  as explained on the proposition
 bellow.
 \begin{proposition}\label{prp2.3} Under hypothesis (H0)-(H3), $\mathcal{P}^nf$ converges
 $\nu$-almost everywhere to $\nu f$.  
 \end{proposition}
 The proof of this proposition will be presented in Section 3.
So, taking the limit of $n$  to infinity in~\eqref{2.8} leads to  $$\nu (\vert f\vert^qlog\frac{\vert f\vert^q}{\nu \vert f\vert^q})\leq cA\left(\frac{C_1}{C_2}+C_2+C_1\right)\nu\left\vert \nabla_{\Gamma_1} f
\right\vert^q+cA\nu\left\vert \nabla_{\Gamma_0} f
\right\vert^q$$
 where $A=lim_{n\rightarrow\infty}\sum_{k=0}^{n-1}C_2^{2k}<\infty$  for  $C_2<1$, and the theorem follows for a constant $C=max\{cA\left(\frac{C_1}{C_2}+C_2+C_1\right),cA\}$ \end{proof}
 \section{Proof of Proposition \ref{prp2.3}. }
   Before proving Proposition~\ref{prp2.3} we will present three useful lemmata. These
lemmata will also be   used in the next section 4 where Proposition~\ref{prp2.2} is
proved. 

In the case of quadratic interactions $V(x,y)=(x-y)^2$ one can calculate $$\mathbb{E}^{i,\omega}\left( f^2(\nabla_j V(x_i-x_j)-\mathbb{E}^{i,\omega}\nabla_j V(x_i-x_j))^2 \right)$$  (see [B-H] and [H]) with the use of the Deuschel-Stroock relative entropy inequality (see [D-S]) and the Herbst argument (see [L] and [H]). Herbst's arguement states that if a probability measure $\mu$ satisfies the LS2 inequality and a function $F$ is Lipschitz continues with $\Vert F \Vert_{Lips}\leq 1$ and such that $\mu (F)=0$, then for some small $\epsilon$ we have  $$\mu e^{\epsilon F^2}<\infty$$
For $\mu=\mathbb{E}^{i,\omega}$ and $F=\frac{\nabla_j V(x_i-x_j)-\mathbb{E}^{i,\omega}\nabla_j V(x_i-x_j)}{2}$ we then obtain  $$\mathbb{E}^{i,\omega} e^{\frac{\epsilon}{4} (\nabla_j V(x_i-x_j)-\mathbb{E}^{i,\omega}\nabla_j V(x_i-x_j))^2}<\infty$$uniformly on the boundary conditions $\omega$, because of hypothesis $(H0)$. In the more general case however of non quadratic interactions that we examine in this work, the Herbst argument cannot be applied. In this and next sections we show how one can bound exponential quantities like the last one with the use of the projection of the infinite dimensional Gibbs measure and hypothesis (H1) and (H2). 

For every probability measure $\mu$, we define the correlation function $$\mu(f;g)\equiv\mu(fg)-\mu(f)\mu(g)$$If for the set $M(k)=\mathbb{Z}\smallsetminus\Lambda(k)$ and  $h_k:=f-\mathbb{E}^{\{ \sim k\}}f$ we define$$Q(u,k)\equiv\nu_{\Lambda(u)}\left\vert \nabla_{\Lambda(u)}\left( \mathbb{E}^{M(u)}
\vert h_k\vert ^q \right)^{\frac{1}{q}}\right\vert^q$$
 then the following lemma presents an estimate
for the correlation function, in terms of $Q(k,k)$.

\begin{lemma}\label{lem3.1}
 For any functions  $u$ localised in $\Lambda(k)$  for which $\nu_{\Lambda(k)} e^{2^{q}\epsilon \vert u\vert^q}<\infty$ the following inequalities are satisfied

~

\noindent (a)
under hypothesis  (H1)\begin{align*}\nu \left\vert\mathbb{E}^{k-1}\mathbb{E}^{k+1}(f;u)\right\vert^q\leq&\frac{C}{\epsilon }Q(k,k)+  \frac{1}{\epsilon }\left(log\nu_{\Lambda(k)} e^{\epsilon \vert u-\mathbb{E}^{k-1}\mathbb{E}^{k+1}u\vert^q}\right)\nu\left\vert f-\mathbb{E}^{k-1}\mathbb{E}^{k+1}f\right \vert^{q}
\end{align*}

\noindent (b) under hypothesis (H0) and (H1)$$
\nu \left\vert\mathbb{E}^{k-1}\mathbb{E}^{k+1}(f;u)\right\vert^q\leq\frac{C}{\epsilon }Q(k,k)+\frac{\hat c}{\epsilon }\left(log\nu_{\Lambda(k)} e^{\epsilon \vert u-\mathbb{E}^{k-1}\mathbb{E}^{k+1}u\vert^q}\right)\sum_{i=k-1,k+1}\nu\left\vert \nabla_ i f\right\vert^q
$$
where $\hat c =\frac{4 c}{\log 2}$.\end{lemma}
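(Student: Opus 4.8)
The plan is to derive the bound on $\nu|\mathbb{E}^{k-1}\mathbb{E}^{k+1}(f;u)|^q$ from a relative entropy / tensorization argument applied inside the finite box $\Lambda(k)$, using the Log-Sobolev inequality supplied by hypothesis (H1) for the restricted Gibbs measure $\nu_{\Lambda(k)}$. First I would observe that since $u$ is localised in $\Lambda(k)$, the product $\mathbb{E}^{k-1}\mathbb{E}^{k+1}$ only averages out the two variables $x_{k-1},x_{k+1}$, and it is natural to compare the correlation $\mathbb{E}^{k-1}\mathbb{E}^{k+1}(f;u)$ with the larger object computed under $\nu_{\Lambda(k)}$. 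The standard tool here is the entropic (Deuschel--Stroock) inequality: for a probability measure $\mu$ and functions $F,G$ one has $\mu(F;G)=\mathrm{Cov}_\mu(F,G)\le \|G\|_{\mathrm{osc}}\cdot(\text{something})$, but more usefully the variational formula $\mu(F;G)\le \frac{1}{\epsilon}\big(\mathrm{Ent}_\mu(e^{\epsilon G}) + \text{Cov correction}\big)$; concretely I would use the inequality $$\mu\big((F-\mu F)(G-\mu G)\big)\le \frac{1}{\epsilon}\,\mathrm{Ent}_\mu\!\big(e^{\epsilon(G-\mu G)}\big)^{1/1}\cdots$$ — more precisely the form $\mu(f;u)\le \frac{1}{\epsilon}\mu\big(f;\log\frac{e^{\epsilon u}}{\mu e^{\epsilon u}}\big)^{?}$. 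The cleaner route is: apply the bound $|\mathrm{Cov}_\mu(F,G)|\le \mu(|F-\mu F|\,|G-\mu G|)$ and then Young's inequality in the $L^q/L^{q'}$ duality, or alternatively the entropy inequality $\mu(FG)-\mu F\,\mu G \le \frac{1}{\epsilon}\big(\log \mu e^{\epsilon G} + \mathrm{Ent}_\mu(F)/\mu(F)\big)\mu(F)$, applied with $F=|f-\mathbb{E}^{k-1}\mathbb{E}^{k+1}f|^q$ and $G=|u-\mathbb{E}^{k-1}\mathbb{E}^{k+1}u|^q$.

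The key steps, in order, would be: (1) reduce $\mathbb{E}^{k-1}\mathbb{E}^{k+1}(f;u)$ to an expression controlled by $\nu_{\Lambda(k)}\big(|f-\mathbb{E}^{k-1}\mathbb{E}^{k+1}f|^q;\,\text{exponential of }u\big)$ by pulling the $x_{k-1},x_{k+1}$-averages through and recentering; (2) invoke the entropy inequality to split this into a term of the form $\frac{1}{\epsilon}\big(\log \nu_{\Lambda(k)}e^{\epsilon|u-\mathbb{E}^{k-1}\mathbb{E}^{k+1}u|^q}\big)\,\nu_{\Lambda(k)}|f-\mathbb{E}^{k-1}\mathbb{E}^{k+1}f|^q$ plus an entropy term $\frac{1}{\epsilon}\mathrm{Ent}_{\nu_{\Lambda(k)}}\big(|h_k|^q\text{-type object}\big)$; (3) bound the entropy term by $\frac{C}{\epsilon}Q(k,k)$ using the Log-Sobolev inequality of hypothesis (H1) for $\nu_{\Lambda(k)}$ — here is where $C$ enters, and where the quantity $Q(k,k)=\nu_{\Lambda(k)}|\nabla_{\Lambda(k)}(\mathbb{E}^{M(k)}|h_k|^q)^{1/q}|^q$ appears, after using that the relevant function is $h_k = f - \mathbb{E}^{\{\sim k\}}f$ and that $\mathbb{E}^{M(k)}$ commutes appropriately with the box; (4) average over the remaining coordinates with $\nu$ to pass from $\nu_{\Lambda(k)}$ back to $\nu$, which is harmless since both sides are already $\Sigma_{\Lambda(k)}$-measurable after conditioning, giving part (a). For part (b), apply hypothesis (H0): the measure $\mathbb{E}^{k-1}$ (resp. $\mathbb{E}^{k+1}$) satisfies LS$_q$ with constant $c$, hence the Spectral Gap inequality with constant $\hat c = 4c/\log 2$ (Remark~\ref{rem1.1}), so $\nu|f-\mathbb{E}^{k-1}\mathbb{E}^{k+1}f|^q \le \hat c\sum_{i=k-1,k+1}\nu|\nabla_i f|^q$ after a telescoping $f-\mathbb{E}^{k-1}\mathbb{E}^{k+1}f = (f-\mathbb{E}^{k+1}f)+\mathbb{E}^{k+1}(f-\mathbb{E}^{k-1}f)$ and the contraction property of conditional expectation on gradients; substituting into part (a) yields part (b).

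I would expect the main obstacle to be step (1)--(2): getting the recentered correlation $\mathbb{E}^{k-1}\mathbb{E}^{k+1}(f;u)$ into a form where the entropy that shows up is exactly an entropy of $|h_k|^q = |f-\mathbb{E}^{\{\sim k\}}f|^q$ under $\nu_{\Lambda(k)}$ and not of $|f|^q$ — this requires care because $\mathbb{E}^{k-1}\mathbb{E}^{k+1}$ is not the same as $\mathbb{E}^{\{\sim k\}}=\mathbb{E}^{k-1}\mathbb{E}^{k+1}$ acting on the full configuration versus on the box, and one must verify the identities relating $\mathbb{E}^{k-1}\mathbb{E}^{k+1}f$ (a conditional expectation under $\nu$) with the single-site measures appearing in the local specification, invoking the DLR consistency $\mathbb{E}^{\Lambda,\omega}\mathbb{E}^{M,\ast}=\mathbb{E}^{\Lambda,\omega}$. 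Another delicate point is the legitimacy of the entropy inequality manipulation: one needs $\nu_{\Lambda(k)}e^{2^q\epsilon|u|^q}<\infty$ (the hypothesis of the lemma), together with the elementary inequality $|u - \mathbb{E}^{k-1}\mathbb{E}^{k+1}u|^q \le 2^{q-1}(|u|^q + |\mathbb{E}^{k-1}\mathbb{E}^{k+1}u|^q) \le 2^q \sup|u|^q$-type control — more precisely one uses Jensen to absorb the $2^q$ factor and guarantee the exponential moment $\nu_{\Lambda(k)}e^{\epsilon|u-\mathbb{E}^{k-1}\mathbb{E}^{k+1}u|^q}<\infty$ is finite, so that the $\log$ appearing in the statement is well defined. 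Once these consistency and integrability bookkeeping points are settled, the remaining estimates are routine applications of Log-Sobolev, the Spectral Gap, and Jensen's inequality.
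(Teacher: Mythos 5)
Your proposal follows essentially the same route as the paper's proof: recenter the covariance, use Jensen and the DLR consistency to localise onto $\Lambda(k)$ as $\nu_{\Lambda(k)}\bigl((\mathbb{E}^{M(k)}\vert f-\mathbb{E}^{k-1}\mathbb{E}^{k+1}f\vert^{q})\vert u-\mathbb{E}^{k-1}\mathbb{E}^{k+1}u\vert^{q}\bigr)$, apply the Deuschel--Stroock entropic inequality with exactly the $F$ and $G$ you name, bound the resulting entropy term by $CQ(k,k)$ via the LS$_q$ of (H1), and for part (b) control $\nu\vert f-\mathbb{E}^{k-1}\mathbb{E}^{k+1}f\vert^{q}$ by the spectral gap with constant $\hat c$ coming from (H0) and the product property. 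The approach and the key steps match the paper's argument; the only cosmetic difference is that the paper gets the spectral gap for $\mathbb{E}^{k-1}\mathbb{E}^{k+1}$ directly from the product property of Remark~\ref{rem1.1} rather than by telescoping.
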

\begin{proof} From  the definition of the correlation function we can write
 \begin{align}\nonumber\nu\left\vert\mathbb{E}^{k-1}\mathbb{E}^{k+1}
(f;u)\right\vert^q= & \nu\left\vert\mathbb{E}^{k-1}\mathbb{E}^{k+1}((f-\mathbb{E}^{k-1}\mathbb{E}^{k+1}f)(u-\mathbb{E}^{k-1}\mathbb{E}^{k+1}u))\right\vert^q
\\ \leq & \nu\mathbb{E}^{k-1}\mathbb{E}^{k+1}\left(\vert f-\mathbb{E}^{k-1}\mathbb{E}^{k+1}
f\vert^q\vert u-\mathbb{E}^{k-1}\mathbb{E}^{k+1}u\vert^q\right) \nonumber\\ = & \label{3.1}\nu\left(\vert f-\mathbb{E}^{k-1}\mathbb{E}^{k+1}
f\vert^q\vert u-\mathbb{E}^{k-1}\mathbb{E}^{k+1}u\vert^q\right)\end{align}
 where above we first used the Jensen's Inequality and then the fact that
 the Gibbs measure
$\nu$  satisfies the DLR equation. Because the function $u$ is localised
 in $\Lambda(k)$ and the measure $\mathbb{E}^{\{k-1,k+1\},\omega}=\mathbb{E}^{k-1}\mathbb{E}^{k+1}$ has boundary in $\{k-2,k,k+2\}\subset\Lambda(k)$, we have that $u-\mathbb{E}^{k-1}\mathbb{E}^{k+1}u$ is also localised in $\Lambda(k)$ and so for $M(k)$ being the complementary
of $\Lambda(k)$ we can write
 \begin{align}\nonumber\nu(\vert f-\mathbb{E}^{k-1}\mathbb{E}^{k+1}
f\vert^{q}&\vert u-\mathbb{E}^{k-1}\mathbb{E}^{k+1}u\vert^q)=&\\ &\label{3.2}\nu_{\Lambda(k)}\left(\left(\mathbb{E}^{M(k)}\vert f-\mathbb{E}^{k-1}\mathbb{E}^{k+1}f\vert^{q}\right)\vert u-\mathbb{E}^{k-1}\mathbb{E}^{k+1}u\vert^q\right)\end{align}
 On the right hand side of~\eqref{3.2}  we can use the following entropic inequality
(see [D-S])
\begin{equation}\label{3.3}\forall t>0, \ \mu(uy)\leq\frac{1}{t}log\left(\mu(e^{tu})\right)+\frac{1}{t}\mu(y\log y)\end{equation}
 for any probability measure $\mu$  and $y\geq 0$, $\mu  y=1$. Then from~\eqref{3.1}
 and~\eqref{3.2}  we will obtain  
   \begin{align}\nonumber \nu&\left\vert\mathbb{E}^{k-1}\mathbb{E}^{k+1}
(f;u)\right\vert^q\leq& \\ & \frac{1}{\epsilon }\nu_{\Lambda(k)}\mathbb{E}^{M(k)}
\vert f-\mathbb{E}^{k-1}\mathbb{E}^{k+1}
f\vert^{q}\log\frac{\mathbb{E}^{M(k)}
\vert f-\mathbb{E}^{k-1}\mathbb{E}^{k+1}
f\vert^{q}}{\nu_{\Lambda(k)}\mathbb{E}^{M(k)}
\vert f-\mathbb{E}^{k-1}\mathbb{E}^{k+1}
f\vert^{q}}\nonumber \\ &\label{3.4}+\frac{1}{\epsilon}\left(log\nu_{\Lambda(k)}e^{\epsilon
\vert u-\mathbb{E}^{k-1}\mathbb{E}^{k+1}
u\vert^q}\right)\nu_{\Lambda(k)}\mathbb{E}^{M(k)}\vert f-\mathbb{E}^{k-1}\mathbb{E}^{k+1}f\vert^{q}\end{align}
The first term on the right hand side of~\eqref{3.4} can be bounded from hypothesis (H1) by the Log-Sobolev inequality for        
 $\nu_{\Lambda(k)}$  
   \begin{align} \nonumber \nu_{\Lambda(k)}\mathbb{E}^{M(k)}
\vert f-\mathbb{E}^{k-1}\mathbb{E}^{k+1}
f\vert^{q}&\log\frac{\mathbb{E}^{M(k)}
\vert f-\mathbb{E}^{k-1}\mathbb{E}^{k+1}
f\vert^{q}}{\nu_{\Lambda(k)}\mathbb{E}^{M(k)}
\vert f-\mathbb{E}^{k-1}\mathbb{E}^{k+1}
f\vert^{q}}\\ \leq &   \label{3.5}C\nu_{\Lambda(k)}\left\vert \nabla_{\Lambda(k)}(\mathbb{E}^{M(k)}
\vert f-\mathbb{E}^{k-1}\mathbb{E}^{k+1}
f\vert^{q})^{\frac{1}{q}}\right \vert ^q=CQ(k,k)\end{align}
 Using~\eqref{3.4} and~\eqref{3.5}  we get
\begin{align}\label{3.5+1equation}
\nu \left\vert\mathbb{E}^{k-1}\mathbb{E}^{k+1}(f;u)\right\vert^q\leq\frac{C}{\epsilon }Q(k,k)+\frac{1}{\epsilon }\left(log\nu e^{\epsilon \vert u-\mathbb{E}^{k-1}\mathbb{E}^{k+1}u\vert^q}\right)\nu\left\vert f-\mathbb{E}^{k-1}\mathbb{E}^{k+1}f\right \vert^{q}
\end{align}which proves (a). If we assume hypothesis (H0), then we can bound the  second term on the right hand side of (\ref{3.5+1equation}) from the $SG_q$ for the measures $\mathbb{E}^{k-1},\mathbb{E}^{k+1}$ from hypothesis (H0) and
the product property for the $SG_q$ (Remark \ref{rem1.1}), to
obtain
\begin{align} \nu\vert f-\mathbb{E}^{k-1}\mathbb{E}^{k+1}f\vert^{q}=&\nu\mathbb{E}^{k-1}\mathbb{E}^{k+1}\vert f-\mathbb{E}^{k-1}\mathbb{E}^{k+1}f\vert^{q}
 \leq &\label{3.6}\hat c\sum_{i=k-1,k+1}\nu\left\vert \nabla_ i f\right\vert^q\end{align}
where $\hat c =\frac{4 c}{\log 2}$. Using (\ref{3.5+1equation}) and ~\eqref{3.6} we finally get
(b)$$
\nu \left\vert\mathbb{E}^{k-1}\mathbb{E}^{k+1}(f;u)\right\vert^q\leq\frac{C}{\epsilon }Q(k,k)+\frac{\hat c}{\epsilon }\left(log\nu e^{\epsilon \vert u-\mathbb{E}^{k-1}\mathbb{E}^{k+1}u\vert^q}\right)\sum_{i=k-1,k+1}\nu\left\vert \nabla_ i f\right\vert^q
$$\end{proof}
 The following lemma gives an explicit bound for the quantity $Q(k,k)$.

\begin{lemma}\label{lem3.2}Suppose that  hypothesis (H0)-(H3) are satisfied. Then
 \begin{align*} Q(k,k)\leq&D\sum_{r=k-2}^{k+2}\nu\left\vert \nabla_{r}f
\right\vert^q\\  & +D\sum_{ n=0 }^{\infty} J^{(n+1)(q-1)}\sum_{r=0}^3\left(\nu\left\vert \nabla_{k+3+4n+r} f
\right\vert^q+\nu\left\vert \nabla_{k-3-4n-r} f
\right\vert^q\right)\end{align*}for some positive constant $D$.\end{lemma}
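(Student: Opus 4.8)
The plan is to differentiate $\mathbb{E}^{M(k)}|h_k|^q$ site by site over $j\in\Lambda(k)$, separating the part of the derivative that acts on the integrand from the part that acts on the density of the conditional measure. Because the interaction is nearest-neighbour, $M(k)$ disconnects into the half-lines $M^-(k)=\{\dots,k-3\}$ and $M^+(k)=\{k+3,\dots\}$, so $\mathbb{E}^{M(k)}=\mathbb{E}^{M^-(k)}\mathbb{E}^{M^+(k)}$; moreover the density of $\mathbb{E}^{M(k)}$ does not depend on $x_j$ for $j\in\{k-1,k,k+1\}$ and depends on $x_{k\mp2}$ only through the single term $J_{k\mp3,k\mp2}V(x_{k\mp3},x_{k\mp2})$. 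Writing $h_k=f-\mathbb{E}^{k-1}\mathbb{E}^{k+1}f$ and using that $\mathbb{E}^{k-1}\mathbb{E}^{k+1}$ has boundary in $\{k-2,k,k+2\}$, one has $\nabla_j h_k=\nabla_j f$ for $j\in\{k-1,k+1\}$ and a ``gradient plus correlation'' expression for $j\in\{k-2,k,k+2\}$. Combining the chain rule for $t\mapsto t^{1/q}$, the bound $|a+b|^q\le 2^{q-1}(|a|^q+|b|^q)$ and Hölder's inequality (which turns $(\mathbb{E}^{M(k)}|h_k|^q)^{1-q}\,|\mathbb{E}^{M(k)}\nabla_j|h_k|^q|^q$ into $q^q\,\mathbb{E}^{M(k)}|\nabla_j h_k|^q$), together with the DLR identity $\nu_{\Lambda(k)}\mathbb{E}^{M(k)}=\nu$, bounds $Q(k,k)$ by a finite sum of three kinds of terms: (i) $\nu|\nabla_r f|^q$ with $r\in\Lambda(k)$; (ii) correlation contributions of $f$ against $\nabla V$ produced by differentiating $\mathbb{E}^{k-1}\mathbb{E}^{k+1}$, all localised in $\Lambda(k)$; and (iii) the two long-range correlation terms
\[
\nu_{\Lambda(k)}\!\left[(\mathbb{E}^{M(k)}|h_k|^q)^{1-q}\,\big|\mathbb{E}^{M^\mp(k)}\big(|h_k|^q;\,J_{k\mp3,k\mp2}\nabla_{k\mp2}V(x_{k\mp3},x_{k\mp2})\big)\big|^q\right].
\]

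The terms of type (i) already give the first sum $D\sum_{r=k-2}^{k+2}\nu|\nabla_r f|^q$. The terms of type (ii) are bounded by applying the entropic inequality~\eqref{3.3} relative to the product measure $\mathbb{E}^{k-1}\mathbb{E}^{k+1}$: the entropy term is controlled by the Log-Sobolev inequality for $\mathbb{E}^{k-1}\mathbb{E}^{k+1}$ (which holds with constant $c$ by Hypothesis (H0) and the product property of Remark~\ref{rem1.1}), producing only $\nu|\nabla_{k\pm1}f|^q$ after $\nu$-integration, while the exponential moment it generates is bounded by Remark~\ref{rem2.1n} --- the constant $2^{q+2}\epsilon$ in (H2) being chosen precisely to absorb the factors $2^{q-1}$, the centering and the rescaling of the free parameter. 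Thus the type (ii) terms merely enlarge the constant $D$ in the first sum.

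The heart of the argument is the treatment of the type (iii) terms, which is what generates the geometric series in $J$. Take the first of them, for which $W_+:=J_{k+3,k+2}\nabla_{k+2}V(x_{k+3},x_{k+2})$ depends, apart from the conditioned variable $x_{k+2}$, only on $x_{k+3}$. Apply the entropic inequality~\eqref{3.3} with a free parameter $t$ and relative to a block measure obtained by shifting $\Lambda(k)$ outwards by four, which contains $x_{k+3}$ among its conditioned variables and which, by Hypothesis (H1), satisfies the Log-Sobolev inequality with the same constant $C$; after using DLR consistency the entropy term becomes $\tfrac{C}{t}$ times a quantity of the form $Q(k+4,k)$. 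Since $W_+$ carries a factor $J$, optimising $t$ against the exponential moment --- which is bounded by (H2)/Remark~\ref{rem2.1n} --- leaves an overall factor that the optimisation yields as $J^{q-1}$, multiplying $\nu|h_k|^q\le\hat c\,(\nu|\nabla_{k-1}f|^q+\nu|\nabla_{k+1}f|^q)$ (by the spectral gap for $\mathbb{E}^{k\pm1}$). Hence this type (iii) term is at most $D\sum_{r=k-2}^{k+2}\nu|\nabla_r f|^q+D\,J^{q-1}Q(k+4,k)$, and symmetrically the other one is controlled by $Q(k-4,k)$.

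Inserting these estimates into the decomposition of $Q(k,k)$ yields the recursion
\[
Q(k,k)\ \le\ D\sum_{r=k-2}^{k+2}\nu|\nabla_r f|^q\ +\ D\,J^{q-1}\big(Q(k+4,k)+Q(k-4,k)\big),
\]
and the same argument applied to $Q(k\pm4,k)$ reproduces its own local-gradient part together with $D\,J^{q-1}Q(k\pm8,k)$ --- and only that, since $h_k$ is already centred near $k$, so that the inward boundary correlation contributes only local gradients and the recursion propagates purely outwards. Iterating $n{+}1$ times, and noting that consecutive shifted blocks overlap in a single site, the $(n{+}1)$-st round contributes $(J^{q-1})^{n+1}$ times the fresh gradients at the four sites $k\pm(3+4n+r)$, $r=0,1,2,3$; summing over $n\ge0$ gives exactly the second sum in the statement, the series converging because $J<1$ by (H3) (one also uses $Q(k\pm4n,k)\to0$, which follows as in the proof of Proposition~\ref{prp2.3}). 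The main obstacle is precisely this recursion: making rigorous the DLR bookkeeping between the conditional expectations at the successively shifted blocks, verifying that it indeed propagates only outwards, and extracting the exact power $J^{q-1}$ (rather than $J^q$) from balancing the free parameter in~\eqref{3.3} against the exponential moments of Remark~\ref{rem2.1n}; the remaining steps are routine once these are in place.
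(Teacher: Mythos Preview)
Your overall strategy --- decompose $Q(u,k)$ by differentiating site by site, separate the direct gradient from the boundary-correlation contribution, and iterate outwards --- is the paper's strategy as well. The genuine gap is the assertion that the recursion ``propagates purely outwards''. It does not. When you analyse $Q(k+4,k)$, differentiating $\mathbb{E}^{M(k+4)}$ at the \emph{inward} boundary site $r=k+2$ produces a correlation with $\nabla_{k+2}V(x_{k+1},x_{k+2})$; handling this via the entropic inequality and (H1) on the shifted block $\Lambda(k)$ (exactly your own recipe, applied inward) returns a $Q(k,k)$ term --- see Lemma~\ref{lem5.1} with $u=k+4$, $t=k$ --- and at $u=k+4n$ the inward side returns $Q(k+4(n-1),k)$. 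The centering of $h_k$ with respect to $\mathbb{E}^{\{\sim k\}}$ does not kill this: the correlation is taken in $\mathbb{E}^{M(k+4)}$, not in $\mathbb{E}^{\{\sim k\}}$. For the same reason $\nu|\nabla_{k+2}h_k|^q$ is not just a local gradient of $f$: differentiating $\mathbb{E}^{\{\sim k\}}f$ at $k+2$ produces another correlation, which the paper treats in Lemma~\ref{lem5.4}(a) and which again feeds back a $J^q Q(k,k)$.

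Your related claim, that optimising the free parameter $t$ in the entropic inequality yields the factor $J^{q-1}$, is also not right. Since (H2) controls only $\nu_{\Lambda}e^{2^{q+2}\epsilon|\nabla V|^q}$, the parameter must stay of order $\epsilon$, and the coupling constant that appears in the recursion is $J^q$ (the $q$-th power of the $J$ carried by $W_\pm$), not $J^{q-1}$. The paper accepts the genuinely two-sided recursion $\tilde Q(n)\le \tilde G(n)+J^qK'\tilde Q(n-4)+J^qK'\tilde Q(n+4)$ and solves it by an abstract lemma (Lemmas~\ref{lem5.2-1}--\ref{lem5.2}); that lemma is where the loss from $J^q$ to $J^{q-1}$ per step actually occurs, and the $Q(k,k)$ terms that reappear on the right are absorbed into the left for $J$ small (equation~\eqref{5.24}). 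Without this two-sided recursion argument, or an equivalent device, the iteration you describe does not close.
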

 The proof of this lemma will be the subject of Section 5.

\begin{lemma} \label{lem3.3}Suppose  that hypothesis (H0)-(H3) are satisfied. Then for $\{i,j\}=\{0,1\}$
$$\nu\left\vert \nabla_{\Gamma_i}(\mathbb{E}^{\Gamma_{j}}f)
\right\vert^q \leq D_1\mathcal{\nu}\left\vert \nabla_{\Gamma_i} f
\right\vert^q+D_2\nu\left\vert \nabla_{\Gamma_j} f
\right\vert^q$$ 
  holds for constants $D_1\in (0,\infty)$  and $0<D_2<1$.  \end{lemma}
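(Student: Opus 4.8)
\noindent\emph{Proof proposal.}\ The plan is to differentiate the product measure $\mathbb{E}^{\Gamma_j}$ one coordinate at a time, separate a ``direct'' term from a covariance term, and feed the covariance term into Lemmata~\ref{lem3.1} and~\ref{lem3.2}; the constant $D_2$ will turn out to be small solely because of the factor $J$ in Hypothesis (H3). First I would fix $l\in\Gamma_i$. Since $\mathrm{dist}(r,s)>1$ for all $r,s\in\Gamma_j$, the measure $\mathbb{E}^{\Gamma_j}$ is the product of the single-site measures $\mathbb{E}^{\{m\}}$, $m\in\Gamma_j$, and the only two factors that depend on $x_l$ are $\mathbb{E}^{l-1}$ and $\mathbb{E}^{l+1}$ (both sites belong to $\Gamma_j$), through the Hamiltonian terms $J_{l-1,l}V(x_{l-1},x_l)$ and $J_{l+1,l}V(x_{l+1},x_l)$. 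Writing $\mathbb{E}^{\Gamma_j}=\mathbb{E}^{\Gamma_j\smallsetminus\{l-1,l+1\}}\mathbb{E}^{l-1}\mathbb{E}^{l+1}$ and using the standard differentiation formula for a Gibbs measure with respect to a boundary coordinate,
$$\nabla_l(\mathbb{E}^{\Gamma_j}f)=\mathbb{E}^{\Gamma_j}(\nabla_l f)-\sum_{s=l-1,l+1}J_{s,l}\,\mathbb{E}^{\Gamma_j\smallsetminus\{l-1,l+1\}}\mathbb{E}^{l-1}\mathbb{E}^{l+1}\bigl(f;\nabla_lV(x_s,x_l)\bigr),$$
and then $|a+b+c|^q\le3^{q-1}(|a|^q+|b|^q+|c|^q)$, $|J_{s,l}|\le J$, Jensen's inequality and the DLR identities $\nu\mathbb{E}^{\Gamma_j}=\nu$, $\nu\mathbb{E}^{\Gamma_j\smallsetminus\{l-1,l+1\}}=\nu$ give
$$\nu\bigl|\nabla_l(\mathbb{E}^{\Gamma_j}f)\bigr|^q\le3^{q-1}\nu|\nabla_l f|^q+3^{q-1}J^q\sum_{s=l-1,l+1}\nu\bigl|\mathbb{E}^{l-1}\mathbb{E}^{l+1}\bigl(f;\nabla_lV(x_s,x_l)\bigr)\bigr|^q.$$

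Next I would apply Lemma~\ref{lem3.1}(b) with $k=l$ and $u=\nabla_lV(x_s,x_l)$ to each covariance term. This $u$ is localised in $\Lambda(l)$, and by Hypothesis (H2) (using $2^q\le2^{q+2}$ and $|u|^q\ge0$) it satisfies $\nu_{\Lambda(l)}e^{2^q\epsilon|u|^q}\le e^K<\infty$; moreover $|u-\mathbb{E}^{l-1}\mathbb{E}^{l+1}u|^q\le2^{q-1}(|u|^q+\mathbb{E}^{l-1}\mathbb{E}^{l+1}|u|^q)$, so Jensen's inequality, the Cauchy--Schwarz inequality and $\nu_{\Lambda(l)}\mathbb{E}^{l-1}\mathbb{E}^{l+1}=\nu_{\Lambda(l)}$ show that the logarithmic factor in Lemma~\ref{lem3.1}(b) is bounded by $K$. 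Hence each covariance term is at most $\frac{C}{\epsilon}Q(l,l)+\frac{\hat cK}{\epsilon}(\nu|\nabla_{l-1}f|^q+\nu|\nabla_{l+1}f|^q)$, and Lemma~\ref{lem3.2} then controls $Q(l,l)$ by $D\sum_{r=l-2}^{l+2}\nu|\nabla_rf|^q$ plus a tail that decays geometrically in $J^{q-1}$.

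Finally I would sum over $l\in\Gamma_i$ and collect constants. The direct term yields $3^{q-1}\nu|\nabla_{\Gamma_i}f|^q$. For the covariance part, each site of $\mathbb{Z}$ is reached with bounded multiplicity both by the windows $\{l-2,\dots,l+2\}$, $l\in\Gamma_i$, and, for each fixed $n$, by the tail windows of Lemma~\ref{lem3.2}, so $\sum_{l\in\Gamma_i}Q(l,l)\le D'\,\nu|\nabla f|^q=D'\bigl(\nu|\nabla_{\Gamma_i}f|^q+\nu|\nabla_{\Gamma_j}f|^q\bigr)$ with $D'$ finite (the series in $n$ converges because $J<1$ and $q>1$), while $\sum_{l\in\Gamma_i}(\nu|\nabla_{l-1}f|^q+\nu|\nabla_{l+1}f|^q)=2\nu|\nabla_{\Gamma_j}f|^q$. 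This produces
$$\nu\bigl|\nabla_{\Gamma_i}(\mathbb{E}^{\Gamma_j}f)\bigr|^q\le\Bigl(3^{q-1}+\tfrac{2\cdot3^{q-1}CD'}{\epsilon}J^q\Bigr)\nu|\nabla_{\Gamma_i}f|^q+\tfrac{2\cdot3^{q-1}(CD'+2\hat cK)}{\epsilon}J^q\,\nu|\nabla_{\Gamma_j}f|^q,$$
which is the claim with $D_1=3^{q-1}+\tfrac{2\cdot3^{q-1}CD'}{\epsilon}J^q\in(0,\infty)$ and $D_2=\tfrac{2\cdot3^{q-1}(CD'+2\hat cK)}{\epsilon}J^q$; by choosing $J$ small enough in (H3) one makes $D_2<1$.

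I expect the main obstacle to be the bookkeeping in this last step: one has to reorganise the double (infinite) sum over $l\in\Gamma_i$ and over the decaying tail of Lemma~\ref{lem3.2} so that every gradient $\nu|\nabla_pf|^q$ is collected with a finite, controlled multiplicity, and then verify that the whole coefficient of $\nu|\nabla_{\Gamma_j}f|^q$ retains an overall power $J^q$, which is precisely what lets (H3) force $D_2<1$. A secondary technical point is the justification of the DLR identities $\nu\mathbb{E}^{\Gamma_j}=\nu$ and $\nu\mathbb{E}^{\Gamma_j\smallsetminus\{l-1,l+1\}}=\nu$ for the infinite regions involved, which should be obtained from the finite-volume DLR equations by the usual approximation argument.
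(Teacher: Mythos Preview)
Your proposal is correct and follows essentially the same route as the paper: differentiate through the density of $\mathbb{E}^{l-1}\mathbb{E}^{l+1}$ to split off a direct term and a covariance term, control the covariance via Lemma~\ref{lem3.1}(b) and then Lemma~\ref{lem3.2}, sum over $l\in\Gamma_i$, and use the overall factor $J^q$ to force $D_2<1$ through (H3). The only cosmetic differences are that the paper first reduces $\nu|\nabla_i(\mathbb{E}^{\Gamma_j}f)|^q\le\nu|\nabla_i(\mathbb{E}^{i-1}\mathbb{E}^{i+1}f)|^q$ by Jensen before differentiating, and it bundles the two covariance terms into a single covariance with $u=\nabla_iV(x_{i-1},x_i)+\nabla_iV(x_{i+1},x_i)$ rather than treating $s=l-1$ and $s=l+1$ separately; the bookkeeping and the infinite-volume DLR identities you flag as potential obstacles are handled exactly as you outline.
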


 \begin{proof} Assume $i=1,j=0$. We have
 \begin{equation}\label{3.7}\nu\left\vert \nabla_{\Gamma_1}(\mathbb{E}^{\Gamma_{0}}f)
\right\vert^q=\sum_{i\in \Gamma_1} \nu\left\vert \nabla_{i}(\mathbb{E}^{\Gamma_{0}}f)
\right\vert^q\leq\sum_{i\in \Gamma_1} \nu\left\vert \nabla_{i}(\mathbb{E}^{i-1}\mathbb{E}^{i+1}f)
\right\vert^q\end{equation}
 If we denote   $\rho_i= \frac{e^{-H(x_{i-1})}e^{-H(x_{i+1})}}{\int e^{-H(x_{i-1})}dx_i\int e^{-H(x_{i+1})}dx_i}$ the density of the measure $\mathbb{E}^{i-1}\mathbb{E}^{i+1}$ we can then write
 \begin{align}\nonumber\nu&\left\vert\nabla_{i}(\mathbb{E}^{i-1}\mathbb{E}^{i+1}f)\right\vert^{q}=
 \nu\left\vert\nabla_{i}(\int \int \rho_{i}  f dx_{i-1}dx_{i+1})\right\vert^{q}\leq
   &\\ &  \label{3.8}2^{q-1}\nu\left\vert\int \int (\nabla_{i}f) \rho_{i} dx_{i-1}dx_{i+1}\right\vert^{q}+\ 2^{q-1}\nu\left\vert\int \int f(\nabla_{i}\rho_{i} )dx_{i-1}dx_{i+1}\right\vert^{q}\leq
\\ & \label{3.9}c_{1}\nu\left\vert\mathbb{E}^{i-1}\mathbb{E}^{i+1}(\nabla_{i}f)\right\vert^{q}+\ c_{1}J^q\nu\left\vert\mathbb{E}^{i-1}\mathbb{E}^{i+1}(f; \nabla_{i}V(x_{i-1},x_{i})+\nabla_{i}V(x_{i+1},x_{i}))\right\vert^{q}
\end{align}
where in~\eqref{3.9} we used hypothesis (H3) to bound the coefficients $J_{i,j}$
and
 we have denoted $c_1=2^{4q}$. If we apply the H\"older  Inequality to the first term of~\eqref{3.9} and Lemma~\ref{lem3.1} (b) to the
second term, we obtain
\begin{equation}\label{3.10}\nu\left\vert\nabla_{i}(\mathbb{E}^{i-1}\mathbb{E}^{i+1}f)\right\vert^q
\leq c_{1}\nu\left\vert\nabla_{i}f\right\vert^q+\frac{J^{q}c_1C}{\epsilon }Q(i,i)+\frac{J^{q}\hat cc_{1}K}{\epsilon }\sum_{k=i-1,i+1}\nu\left\vert \nabla_ k f
\right\vert^q \end{equation}
where the constant $K$ as in hypothesis $(H2)$. From~\eqref{3.7} and~\eqref{3.10} we have
\begin{align*}
\nu  \left\vert \nabla_{\Gamma_1}(\mathbb{E}^{\Gamma_0}f)
\right\vert^q\leq  c_1\nu\left\vert\nabla_{ \Gamma_1}f\right\vert^q+\frac{J^q c_1C}{\epsilon}\sum_{i\in \Gamma_1}Q(i,i)+
 \frac{J^q\hat cc_1K}{\epsilon }\sum_{i\in \Gamma_1}\sum_{k=i-1,i+1}\nu\left\vert \nabla_ k
f
\right\vert^q\end{align*}
 If we use Lemma~\ref{lem3.2} to replace $Q(k,k)$ in the above expression we get
\begin{align*}\nonumber\nu  \left\vert \nabla_{\Gamma_1}(\mathbb{E}^{\Gamma_0}f)\right\vert^q&\leq c_1\nu\left\vert\nabla_{ \Gamma_1}f\right\vert^q+\frac{J^q\hat c2c_1K}{\epsilon}\nu\left\vert\nabla_{ \Gamma_0}f\right\vert^q+\frac{J^q c_1DC}{\epsilon }\sum_{i\in \Gamma_1}\sum_{r=i-2}^{i+2}\nu\left\vert \nabla_r f\right\vert^q+ \\ & \frac{J^q c_1DC}{\epsilon }\sum_{i\in \Gamma_1}\sum_{n=0 }^{\infty} J^{(n+1)(q-1)}\sum_{r=0}^3\left(\nu\left\vert \nabla_{i+3+4n+r} f
\right\vert^q+\nu\left\vert \nabla_{i-3-4n-r} f
\right\vert^q\right)\end{align*}
for constant $D>0$ as in Lemma~\ref{lem3.2}. For coefficients $J_{i,j}$ sufficiently small such that $J<1$ in (H3) we finally obtain
 
~
 
$\nu  \left\vert \nabla_{\Gamma_1}(\mathbb{E}^{\Gamma_0}f)
\right\vert^q\leq J^{q}\left(\frac{2\hat cc_{1}K}{\epsilon}+\frac{2c_{1}CD}{\epsilon }+2D\frac{c_{1}C}{\epsilon }\frac{J^{(q-1)}}{1-J^{(q-1)}}\right)\nu\left\vert \nabla_ { \Gamma_0}
f
\right\vert^q$
 
~
 
$ \   \ \ \  \ \  \   \  \  \   \   \   \   \   \   \   \   \   \   \   \   \   \   \    \   \   \  \   \  +\left(  c_1q+\frac{J^{q}c_{1}C}{\epsilon }3D+D\frac{2J^{q}c_{1}C}{\epsilon }\frac{J^{(q-1)}}{1-J^{(q-1)}}\right)\nu\left\vert\nabla_{ \Gamma_1}f\right\vert^q$

 ~

\noindent
and the lemma follows for $J$ sufficiently small such that
$$D_2=J^{q}\left(\frac{\hat cc_{1}K}{\epsilon}2+\frac{2c_1CD}{\epsilon }+D\frac{2c_1C}{\epsilon }\frac{J^{(q-1)}}{1-J^{(q-1)}}\right)<1$$  \end{proof}

~

 Now we can prove
 Proposition~\ref{prp2.3}.

 ~

 \noindent
\textit{\textbf{Proof of Proposition~\ref{prp2.3}}.} Following [G-Z] we will  show that in   $L_1(\nu)$ we have   $lim_{n\rightarrow \infty}\mathcal{P}^n=\nu$.
  For $i\not=j$ we have that
 \begin{align}\nonumber\nu\vert\mathbb{E}^{\Gamma_{j}} f- \mathbb{E}^{\Gamma_{i}}\mathbb{E}^{\Gamma_{j}} f\vert^q&=\nu\mathbb{E}^{\Gamma_{i}}\vert\mathbb{E}^{\Gamma_{j}} f- \mathbb{E}^{\Gamma_{i}}\mathbb{E}^{\Gamma_{j}} f\vert^q\\ &
 \label{3.11}\leq  \hat c\nu\left\vert \nabla_{\Gamma_i}(\mathbb{E}^{\Gamma_{j}} f
)\right\vert^q\end{align}          
The last inequality due to the  fact that both  the measures  $\mathbb{E}^{\Gamma_{0}}$  and  $\mathbb{E}^{\Gamma_{1}}$
satisfy the Log-Sobolev Inequality and the Spectral Gap inequality with constants  independently of the boundary conditions. If we use Lemma \ref{lem3.3} we get
$$\nu\vert\mathbb{E}^{\Gamma_{j}} f- \mathbb{E}^{\Gamma_{i}}\mathbb{E}^{\Gamma_{j}} f\vert^q\leq  \hat cD_1\nu \vert \nabla_{\Gamma_i}f\vert^q+\hat cD_2\nu \vert \nabla_{\Gamma_j}f\vert^q$$  
  From the last inequality we obtain that for any   $n\in \mathbb{N}$, 
 \begin{align*}\nu\vert \mathcal{P}^{n}f- \mathbb{E}^{\Gamma_0}\mathcal{P}^{n} f\vert^q &\leq  \hat cD_1\nu \vert \nabla_{\Gamma_0}(\mathbb{E}^{\Gamma_0}\mathcal{P}^{n-1} f)\vert^q+\hat cD_2\nu \vert \nabla_{\Gamma_1}(\mathbb{E}^{\Gamma_0}\mathcal{P}^{n-1} f)\vert^q\\&=\hat cD_2\nu \vert \nabla_{\Gamma_1}(\mathbb{E}^{\Gamma_0}\mathcal{P}^{n-1} f)\vert^q\end{align*}
 If we use Lemma \ref{lem3.3} to bound the last expression we have the following 
\begin{equation} \label{BorelCant1} \nu\vert \mathcal{P}^{n}f- \mathbb{E}^{\Gamma_0}\mathcal{P}^{n} f\vert^q \leq \hat cD_2^{n}\left(D_1\nu\left\vert \nabla_{\Gamma_1} f
\right\vert^q+D_2\nu\left\vert \nabla_{\Gamma_0} f
\right\vert^q\right)\end{equation} Similarly we obtain 
 \begin{equation} \label{BorelCant1+2} \nu\vert\mathbb{E}^{\Gamma_0} \mathcal{P}^{n}f- \mathcal{P}^{n+1} f\vert^q\leq  \hat cD_2^{n}\left(D_1\nu\left\vert \nabla_{\Gamma_1} f
\right\vert^q+D_2\nu\left\vert \nabla_{\Gamma_0} f
\right\vert^q\right)\end{equation}

Consider the sequence $\{\mathcal{Q}^n\}_{n\in \mathbb{N}}$ defined as 
$$\mathcal{Q}^{n}f=\begin{cases}\mathcal{P}^{\frac{n}{2}}f & \text{ if \ } n \text{ \ even} \\
\mathbb{E}^{\Gamma_0}\mathcal{P}^{\frac{n-1}{2}}f & \text{ if \ } n \text{ \ odd} \\
\end{cases}$$
for every $n\in\mathbb{N}$. Hence, if we define the sets $$A_n=\{\vert \mathcal{Q}^nf- \mathcal{Q}^{n+1} f\vert \geq (\frac{1}{2})^n\}$$ we obtain 
\begin{equation*}\nu (A_n)=\nu\left( \{\vert \mathcal{Q}^nf- \mathcal{Q}^{n+1} f\vert \geq (\frac{1}{2})^n\} \right)\leq 2^{qn}\nu\vert \mathcal{Q}^nf- \mathcal{Q}^{n+1} f\vert^q  \end{equation*}by Chebyshev inequality. If we use (\ref{BorelCant1}) and (\ref{BorelCant1+2}) to bound the last we have 
\begin{equation*}\nu (A_n)\leq (2^{q}D_2^\frac{1}{2} )^{n}\hat c\left(D_1\nu\left\vert \nabla_{\Gamma_1} f
\right\vert^q+D_2\nu\left\vert \nabla_{\Gamma_0} f
\right\vert^q\right)
\end{equation*}
We can choose $J$ sufficiently small such that $2^{q}D_2^\frac{1}{2}<\frac{1}{2}$ in which case we get that \begin{equation*}\sum_{n=0}^\infty\nu (A_n)\leq\left(\sum_{n=0}^\infty(\frac{1}{2})^{n} \right)\hat c\left(D_1\nu\left\vert \nabla_{\Gamma_1} f
\right\vert^q+D_2\nu\left\vert \nabla_{\Gamma_0} f
\right\vert^q\right)<\infty
\end{equation*}   From the  Borel-Cantelli lemma, only finite number of the sets $A_n$ can occur, which implies that  the sequence
 $$\{\mathcal{Q}^{n} f\}_{n \in \mathbb{N}}$$ 
 is a Cauchy sequence and that it converges $\nu-$almost surely. Say $$\mathcal{Q}^nf\rightarrow \theta (f) \   \   \   \   \   \nu-\text{a.e.}$$ We will first show that $\theta (f)$  is a constant, i.e. it does not depend on variables on $\Gamma_0$ or $\Gamma_1$. To show that, first notice that  $\mathcal{Q}^n(f)$ is a function on $\Gamma_1$ and $\Gamma_0$ when $n$  is odd and even respectively, which implies that the limits  $$\theta_{o}(f)=\lim_{n \text{\ odd}, n\rightarrow \infty}\mathcal{Q}^nf \text{ \ and \ }\theta_e(f)=\lim_{n \text{\ even}, n\rightarrow \infty}\mathcal{Q}^nf$$ do not depend on variables on $\Gamma_0$ and $\Gamma_1$ respectively.    Since both the  subsequences $\{\mathcal{Q}^nf\}_{n\text{\ even}}$ and $\{\mathcal{Q}^nf\}_{n\text{\ odd}}$  converge to $\theta (f)$ $\nu-$a.e.  we have that $$\theta_{o}(f)=\theta(f)=\theta_{e}(f)$$
 which implies that $\theta (f)$ is a constant. From that we obtain that \begin{equation}\label{BorelCant6} \nu \left( \theta (f) \right)=\theta(f)
\end{equation} 
Since the sequence $\{ \mathcal{Q}^n f\}_{n \in \mathbb{N}}$ 
 converges $\nu-$almost, the same holds for  the sequence $\{ \mathcal{Q}^n f-\nu \mathcal{Q}^n f\}_{n \in \mathbb{N}}$. We have  $$\lim_{n\rightarrow \infty}(\mathcal{Q}^n f-\nu \mathcal{Q}^n f)=\theta (f)-\nu\left( \theta(f) \right) =\theta (f)-\theta (f)=0$$
where above we used (\ref{BorelCant6}). On the other side, we also have 
 \begin{equation}\label{BorelCant7}\lim_{n\rightarrow \infty}( \mathcal{Q}^n f-\nu \mathcal{Q}^n f)=\lim_{n\rightarrow \infty}(\mathcal{Q}^n f-\nu f)=\theta (f)-\nu (f)
\end{equation} 
From (\ref{BorelCant6}) and (\ref{BorelCant7}) we get that 
$$\theta (f)=\nu (f)$$
We finally get $$\lim_{n\rightarrow \infty}\mathcal{P}^n f=\lim_{n \text{\ even}, n\rightarrow \infty}\mathcal{Q}^nf=\nu f,  \  \   \nu \  \  a.e. $$
\qed  
 
  \section{Proof of Proposition~\ref{prp2.2}}
 Before we prove Proposition~\ref{prp2.2} we present some useful lemmata. First we
 define \begin{equation}\label{defineW_k}W_k=\nabla_kV(x_{k},x_{k-1})+\nabla_kV(x_{k},x_{k+1}) \text{\; and \;}
U_k=\left\vert W_{k}\right\vert^q+\mathbb{E}^{\{\sim k\}}\left\vert W_{k}\right\vert^q\end{equation}
where  $\{\sim k \}\equiv\{j:j\sim k\}=\{k-1,k+1\}$.
\begin{lemma}\label{lem4.1} The following inequality holds  
\begin{align*}
\mathbb{E}^{\{\sim k\}}( f^q;  W_{k})
 \leq   c_{0}\left(\mathbb{E}^{\{\sim k\}}\vert f\vert^q\right)^{\frac{1}{p}}\left( \mathbb{E}^{\{\sim k \}}(\vert f-\mathbb{E}^{\{\sim k\}}f\vert^qU_k) \right)^\frac{1}{q}\end{align*} for some constant $c_0$ uniformly on the boundary conditions and  $\frac{1}{q}+\frac{1}{p}=1$. \end{lemma}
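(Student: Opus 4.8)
The plan is to recast the correlation function $\mathbb{E}^{\{\sim k\}}(f^q;W_k)$ in a ``doubly centred'' form and then to conclude by Hölder's inequality together with two elementary estimates; throughout I use the standing convention $f\ge 0$, so that $f^q=|f|^q$. Write $\bar f:=\mathbb{E}^{\{\sim k\}}f$. Since $\mathbb{E}^{\{\sim k\}}=\mathbb{E}^{k-1}\mathbb{E}^{k+1}$ integrates out exactly the variables $x_{k-1},x_{k+1}$, the function $\bar f$ does not depend on $x_{k-1},x_{k+1}$ and is therefore a constant for the measure $\mathbb{E}^{\{\sim k\}}$. Hence, applying the elementary identity $\mu(g;h)=\mu((g-a)(h-\mu h))$ --- valid for any probability measure $\mu$ and any $\mu$-constant $a$, the $a$-terms cancelling because $h-\mu h$ is centred --- with $\mu=\mathbb{E}^{\{\sim k\}}$, $g=f^q$, $h=W_k$ and $a=\bar f^q$, I obtain
\[
\mathbb{E}^{\{\sim k\}}(f^q;W_k)=\mathbb{E}^{\{\sim k\}}\big((f^q-\bar f^q)(W_k-\mathbb{E}^{\{\sim k\}}W_k)\big).
\]
It suffices to bound the absolute value of the left-hand side. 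By the convexity of $t\mapsto t^q$ on $[0,\infty)$ one has $|f^q-\bar f^q|\le q(f+\bar f)^{q-1}|f-\bar f|$, so that
\[
\big|\mathbb{E}^{\{\sim k\}}(f^q;W_k)\big|\le q\,\mathbb{E}^{\{\sim k\}}\big((f+\bar f)^{q-1}\,|f-\bar f|\,|W_k-\mathbb{E}^{\{\sim k\}}W_k|\big).
\]

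Next I would apply Hölder's inequality with conjugate exponents $p$ and $q$, peeling off the factor $(f+\bar f)^{q-1}$; the arithmetic that makes this come out right is $(q-1)p=q$, so that $\big((f+\bar f)^{q-1}\big)^p=(f+\bar f)^q$ and
\[
\big|\mathbb{E}^{\{\sim k\}}(f^q;W_k)\big|\le q\,\big(\mathbb{E}^{\{\sim k\}}(f+\bar f)^q\big)^{\frac1p}\big(\mathbb{E}^{\{\sim k\}}\big(|f-\bar f|^q\,|W_k-\mathbb{E}^{\{\sim k\}}W_k|^q\big)\big)^{\frac1q}.
\]
It then remains to clean up the two factors. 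For the first, $(f+\bar f)^q\le 2^{q-1}(f^q+\bar f^q)$ combined with Jensen's inequality $\bar f^q=(\mathbb{E}^{\{\sim k\}}f)^q\le\mathbb{E}^{\{\sim k\}}f^q$ gives $\mathbb{E}^{\{\sim k\}}(f+\bar f)^q\le 2^q\,\mathbb{E}^{\{\sim k\}}f^q$. For the second, $|W_k-\mathbb{E}^{\{\sim k\}}W_k|^q\le 2^{q-1}(|W_k|^q+|\mathbb{E}^{\{\sim k\}}W_k|^q)\le 2^{q-1}(|W_k|^q+\mathbb{E}^{\{\sim k\}}|W_k|^q)=2^{q-1}U_k$, once more by Jensen. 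Substituting both estimates yields the asserted bound with $c_0=q\,2^{q/p+(q-1)/q}$, a constant that depends only on $q\in(1,2]$ and is in particular uniform in the boundary conditions, as required.

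The argument is essentially mechanical, so the only genuine choice --- the ``hard part'', such as there is one --- is to centre $f^q$ about $\bar f^q=(\mathbb{E}^{\{\sim k\}}f)^q$ rather than about its own $\mathbb{E}^{\{\sim k\}}$-mean $\mathbb{E}^{\{\sim k\}}f^q$. This asymmetric centring is legitimate precisely because $W_k$ is centred at the same time, and it is what turns $f^q-\bar f^q$ into a quantity dominated by a power of $|f-\bar f|$, thereby producing after Hölder exactly the term $\mathbb{E}^{\{\sim k\}}(|f-\mathbb{E}^{\{\sim k\}}f|^qU_k)$ on the right-hand side. Integrability of $f^q$, $|W_k|^q$ and of the product $f^qU_k$ against $\mathbb{E}^{\{\sim k\}}$ --- needed to license the manipulations above --- follows from the standing assumptions of Remark~\ref{newremark1} and from hypothesis (H2), and I would not belabour it.
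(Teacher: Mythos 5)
Your proof is correct, but it takes a genuinely different route from the paper's. The paper symmetrizes: it writes the covariance as $\tfrac12\,\mathbb{E}^{\{\sim k\}}\otimes\tilde{\mathbb{E}}^{\{\sim k\}}\big((f^q-\tilde f^q)(W_k-\tilde W_k)\big)$ over a product with an independent copy, represents $f^q-\tilde f^q=q\int_0^1 F(s)^{q-1}(f-\tilde f)\,ds$ with $F(s)=sf+(1-s)\tilde f$, and only then applies H\"older and Jensen; the price is an extra round of bookkeeping to convert $\mathbb{E}\otimes\tilde{\mathbb{E}}\vert(f-\tilde f)(W_k-\tilde W_k)\vert^q$ back into $\mathbb{E}^{\{\sim k\}}(\vert f-\mathbb{E}^{\{\sim k\}}f\vert^q U_k)$, which is where the factor $2^6$ in the paper's constant $c_0=2^6 2^{q/p}q$ comes from. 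You instead exploit that $\mathbb{E}^{\{\sim k\}}f$ is a constant for the measure $\mathbb{E}^{\{\sim k\}}$ and centre $f^q$ at $(\mathbb{E}^{\{\sim k\}}f)^q$ --- legitimate precisely because $W_k$ is simultaneously centred --- and replace the integral representation by the pointwise mean-value bound $\vert f^q-\bar f^q\vert\le q(f+\bar f)^{q-1}\vert f-\bar f\vert$. The two H\"older/Jensen steps are then the same in spirit, but your version stays on the original probability space, produces the term $\mathbb{E}^{\{\sim k\}}(\vert f-\mathbb{E}^{\{\sim k\}}f\vert^q U_k)$ directly, and gives the sharper constant $c_0=q\,2^{q/p+(q-1)/q}$. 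Both constants are uniform in the boundary conditions, which is all the lemma requires, so either argument serves the paper equally well; one small stylistic point is that your key pointwise inequality is really the mean value theorem applied to $t\mapsto t^q$ (monotonicity of $t\mapsto qt^{q-1}$ plus $\xi\le f+\bar f$) rather than convexity per se, but the inequality itself is valid for all $q>1$ and $f\ge0$.
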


\begin{proof} We can  write
\begin{align}\label{4.1}\mathbb{E}^{\{\sim k\}}(f^q;W_{k})=
\frac{1}{2}\mathbb{E}^{\{\sim k\}}\otimes\mathbb{\tilde
E}^{\{\sim k\}}\left((f^q-\tilde f^{q})(W_{k}-\tilde W_k  )\right)\end{align}
 where $\mathbb{\tilde
E}^{\{\sim k\}}$ is an isomorphic copy of $\mathbb{E}^{\{\sim k\}}$. If we define the function $F$  to be  $F(s)=sf+(1-s)\tilde f$ then
 \begin{align} & ~\eqref{4.1}=\frac{1}{2}\mathbb{E}^{\{\sim k\}}\otimes\mathbb{\tilde
E}^{\{\sim k\}}\left(\left(\int_0^1ds\frac{d}{ds}
 F(s)^{q}\right)(W_{k}-\tilde W_k) \right)\nonumber \\ \nonumber& \ \ \
 \ \ \ \ = \frac{1}{2}\mathbb{E}^{\{\sim k\}}\otimes\mathbb{\tilde
E}^{\{\sim k\}}\left(\left(\int_0^1dsq
 F(s)^{q-1}\frac{d}{ds}F(s)\right)(W_{k}-\tilde W_k ) \right) \\ &\ \ \
 \ \ \
 \  =\frac{1}{2}\mathbb{E}^{\{\sim k\}}\otimes\mathbb{\tilde
E}^{\{\sim k\}}\left(\left(q\int_0^1dsF(s)^{q-1}(f-\tilde f)\right)(W_{k}-\tilde W_k  )\right) \nonumber\end{align}
If we use the Holder inequality
for the conjugate numbers $p$ and $q$, then the last quantity can be bounded
by
 
~
$ \ \ \ \ \ \ \ \ \ \frac{q}{2}\left\{\mathbb{E}^{\{\sim k\}}\otimes\mathbb{\tilde
E}^{\{\sim k\}}\left( \int_0^1dsF(s)^{q-1}  \right)^p\right\}^{\frac{1}{p}}\times$
 
\begin{equation} \label{4.2}\ \ \ \ \ \ \ \ \ \ \ \ \ \ \ \ \ \ \ \ \ \ \ \ \ \  \left\{\mathbb{E}^{\{\sim k\}}\otimes\mathbb{\tilde
E}^{\{\sim k\}}\left\vert(f-\tilde f)(W_{k}-\tilde W_k)  \right\vert^q\right\}^{\frac{1}{q}}\end{equation}
For the first term in the above product, by Jensen's Inequality  and $\frac{1}{q}+\frac{1}{p}=1$, we obtain  
\begin{align}  & \left\{ \mathbb{E}^{\{\sim k\}}\otimes\mathbb{\tilde
E}^{\{\sim k\}} \left(\int_0^1dsF(s)^{q-1}\right)^p\right\} ^{\frac{1}{p}}\nonumber \\ &\nonumber \ \ \ \ \ \ \ \ \ \leq \left\{\mathbb{E}^{\{\sim k\}}\otimes\mathbb{\tilde
E}^{\{\sim k\}}\int_0^1dsF(s)^{q}\right\}^{\frac{1}{p}}=  \left(\int_0^1ds\mathbb{E}^{\{\sim k\}}\otimes\mathbb{\tilde
E}^{\{\sim k\}}F(s)^{q}\right)^{\frac{1}{p}}  \\ & \label{4.3}
\ \ \ \ \ \ \ \ \ \leq \left( 2^q\int_0^1ds\mathbb{E}^{\{\sim k\}}\otimes\mathbb{\tilde
E}^{\{\sim k\}} \left(sf^{q}+(1-s)\tilde
 f^{q} \right)  \right)^{\frac{1}{p}}= 2^{\frac{q}{p}}(\mathbb{E}^{\{\sim k\}} f^{q} )^\frac{1}{p}\end{align} 
 If we plug~\eqref{4.3} into~\eqref{4.2} we finally get
 \begin{align*}\mathbb{E}^{\{\sim k\}}(f^q;W_{k})\leq &\frac{2^{\frac{q}{p}}q}{2}(\mathbb{E}^{\{\sim k\}}f^{q} )  ^\frac{1}{p}\left\{\mathbb{E}^{\{\sim k\}}\otimes\mathbb{\tilde
E}^{\{\sim k\}}\left(\vert f-\tilde f\vert \vert W_{k}-\tilde W_k \vert \right)^q\right\}^{\frac{1}{q}}\\
\leq& 2^62^{\frac{q}{p}}q(\mathbb{E}^{\{\sim k\}} f^{q} )  ^\frac{1}{p}\left\{\mathbb{E}^{\{\sim k\}} \left(\vert f-\mathbb{E}^{\{\sim k\}} f\vert ^q(\left\vert W_{k}\right\vert^q+\mathbb{E}^{\{\sim k\}}\left\vert W_{k}\right\vert^q)  \right)\right\}^{\frac{1}{q}}
\end{align*}
 The lemma follows for constant  $c_0=2^62^{\frac{q}{p}}q$.\end{proof}
  Define now the quantity $$ A(k)=\nu\left(\mathbb{E}^{\{\sim k\}}  \vert f\vert^q\right)^{-\frac{q}{p}}\left\vert\mathbb{E}^{\{\sim k\}} (\vert f\vert^q;W_{k})\right\vert^q$$The
 next lemma presents an estimate of $A(k)$ involving $Q(k,k)$.
 \begin{lemma} \label{lem4.2} Suppose that that hypothesis (H0)-(H2)  are satisfied. Then
  $$A(k)\leq\frac{c^{q}_{0}C}{\epsilon  } Q(k,k)+\frac{c^{q}_{0}\hat cK}{ \epsilon}\sum_{i=k-1,k+1}\nu\left\vert \nabla_{i} f\right\vert^q$$
 where
  the constants $\epsilon$ and $K$ are as in hypothesis (H2).\end{lemma}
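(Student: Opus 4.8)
The plan is to feed Lemma~\ref{lem4.1} into the definition of $A(k)$ and then run the Deuschel--Stroock entropic inequality~\eqref{3.3} with respect to the measure $\nu_{\Lambda(k)}$, exactly in the spirit of the proof of Lemma~\ref{lem3.1}; the genuinely new ingredient is an exponential moment bound for $U_k$ coming from Hypothesis~(H2).

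First I would apply Lemma~\ref{lem4.1} both to $W_k$ and to $-W_k$ (legitimate since $U_k$ is invariant under $W_k\mapsto-W_k$), obtaining the pointwise estimate
$$\left\vert\mathbb{E}^{\{\sim k\}}(\vert f\vert^q;W_k)\right\vert^q\le c_0^q\left(\mathbb{E}^{\{\sim k\}}\vert f\vert^q\right)^{\frac{q}{p}}\mathbb{E}^{\{\sim k\}}\left(\vert f-\mathbb{E}^{\{\sim k\}}f\vert^q U_k\right).$$
Multiplying by $\left(\mathbb{E}^{\{\sim k\}}\vert f\vert^q\right)^{-\frac{q}{p}}$ the two powers cancel — which is exactly why $A(k)$ carries this normalisation — and integrating against $\nu$, using $\nu\mathbb{E}^{\{\sim k\}}=\nu$ (the DLR equation, valid since $\{\sim k\}=\{k-1,k+1\}$ is finite), gives $A(k)\le c_0^q\,\nu\left(\vert f-\mathbb{E}^{\{\sim k\}}f\vert^q U_k\right)$. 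Next, $U_k$ is localised in $\Lambda(k)$: $W_k$ depends only on $x_{k-1},x_k,x_{k+1}$, while $\mathbb{E}^{\{\sim k\}}\vert W_k\vert^q$, obtained by integrating out $x_{k-1},x_{k+1}$, depends only on $x_{k-2},x_k,x_{k+2}$. Hence, writing $M(k)=\mathbb{Z}\smallsetminus\Lambda(k)$ and $G=\mathbb{E}^{M(k)}\vert f-\mathbb{E}^{\{\sim k\}}f\vert^q$, the manipulation used in~\eqref{3.2} gives $\nu\left(\vert f-\mathbb{E}^{\{\sim k\}}f\vert^q U_k\right)=\nu_{\Lambda(k)}\left(U_k\,G\right)$.

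Now I apply the entropic inequality~\eqref{3.3} with $\mu=\nu_{\Lambda(k)}$, $t=\epsilon$, $u=U_k$ and $y=G/\nu_{\Lambda(k)}G$, and multiply through by $\nu_{\Lambda(k)}G=\nu\vert f-\mathbb{E}^{\{\sim k\}}f\vert^q$, which yields
$$\nu_{\Lambda(k)}\left(U_k\,G\right)\le\frac{1}{\epsilon}\left(\log\nu_{\Lambda(k)}e^{\epsilon U_k}\right)\nu\vert f-\mathbb{E}^{\{\sim k\}}f\vert^q+\frac{1}{\epsilon}\,\nu_{\Lambda(k)}\left(G\log\frac{G}{\nu_{\Lambda(k)}G}\right).$$
The entropy term is, with $g=G^{1/q}$, bounded by $C\,Q(k,k)$ by the Log-Sobolev inequality for $\nu_{\Lambda(k)}$ of Hypothesis~(H1): since $h_k=f-\mathbb{E}^{\{\sim k\}}f$, the right-hand side of that inequality is verbatim the quantity $Q(k,k)$ of Lemma~\ref{lem3.1}. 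The prefactor $\nu\vert f-\mathbb{E}^{\{\sim k\}}f\vert^q$ is bounded by $\hat c\sum_{i=k-1,k+1}\nu\vert\nabla_i f\vert^q$ exactly as in~\eqref{3.6}, using the Spectral Gap inequality for the product measure $\mathbb{E}^{k-1}\mathbb{E}^{k+1}$ that follows from (H0) and Remark~\ref{rem1.1}.

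The one genuinely new point, and the step I expect to be the main obstacle, is the exponential moment bound $\log\nu_{\Lambda(k)}e^{\epsilon U_k}\le K$. Here I would use convexity to write $\vert W_k\vert^q\le 2^{q-1}\left(\vert\nabla_k V(x_k,x_{k-1})\vert^q+\vert\nabla_k V(x_k,x_{k+1})\vert^q\right)$, so that $U_k\le 2^{q-1}$ times a sum of four non-negative terms $T$: two of the form $\vert\nabla_k V(x_k,x_j)\vert^q$ and two of the form $\mathbb{E}^{\{\sim k\}}\vert\nabla_k V(x_k,x_j)\vert^q$, with $j\in\{k-1,k+1\}$. Applying the generalised Hölder inequality with all four exponents equal to $4$, $\nu_{\Lambda(k)}e^{\epsilon U_k}$ is at most the product of the fourth roots of the four quantities $\nu_{\Lambda(k)}e^{2^{q+1}\epsilon T}$. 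For the two terms $T=\vert\nabla_k V(x_k,x_j)\vert^q\ge0$ one has $e^{2^{q+1}\epsilon T}\le e^{2^{q+2}\epsilon T}$ pointwise, and since $\nabla_k V(x_k,x_j)$ is exactly of the form $\nabla_r V(x_r,x_s)$ occurring in (H2) (with $i=k$, $r=k$, $s=j\in\Lambda(k)$), Hypothesis~(H2) bounds each by $e^K$. For the two terms $T=\mathbb{E}^{\{\sim k\}}\vert\nabla_k V(x_k,x_j)\vert^q$, Jensen's inequality moves the exponential inside $\mathbb{E}^{\{\sim k\}}$, after which the identity $\nu_{\Lambda(k)}\mathbb{E}^{\{\sim k\}}g=\nu_{\Lambda(k)}g$ for $g$ localised in $\Lambda(k)$ — valid because the boundary of $\{\sim k\}$ lies inside $\Lambda(k)$, so this is again DLR — reduces them to the previous case; this is the estimate already recorded in Remark~\ref{rem2.1n}. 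Hence $\log\nu_{\Lambda(k)}e^{\epsilon U_k}\le K$. Substituting this together with the two bounds of the previous paragraph into the chain produces $A(k)\le c_0^q\left(\frac{C}{\epsilon}Q(k,k)+\frac{\hat c K}{\epsilon}\sum_{i=k-1,k+1}\nu\vert\nabla_i f\vert^q\right)$, which is the claimed inequality; note Hypothesis~(H3) plays no role. Beyond the routine localisation bookkeeping, the only thing to watch is the accounting of powers of two: (H2) carries the weight $2^{q+2}\epsilon$ precisely so that the factor $2^{q-1}$ from convexity together with the factor $4=2^2$ from the fourfold Hölder split — product $2^{q+1}\epsilon$ — still stays below it.
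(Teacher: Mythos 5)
Your proposal is correct and follows essentially the same route as the paper: Lemma~\ref{lem4.1} plus the DLR equation to reach $A(k)\le c_0^q\,\nu_{\Lambda(k)}\bigl((\mathbb{E}^{M(k)}\vert h_k\vert^q)U_k\bigr)$, then the entropic inequality~\eqref{3.3} with (H1) for the entropy term and the Spectral Gap from (H0) for the prefactor. The only place you go beyond the paper is in spelling out the bound $\log\nu_{\Lambda(k)}e^{\epsilon U_k}\le K$ via convexity, fourfold H\"older and Jensen, a detail the paper leaves implicit by referring to (H2); your accounting of the factor $2^{q+2}$ there is consistent with the hypothesis.
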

\begin{proof} We can initially bound $A(k)$ with the use of Lemma~\ref{lem4.1}
 \begin{align}\nonumber A(k)=&\nu\left(\mathbb{E}^{\{\sim k\}}f^q\right)^{-\frac{q}{p}}\left\vert\mathbb{E}^{\{\sim k\}}(f^q;W_{k})\right\vert^q\leq c^{q}_{0}\nu \mathbb{E}^{k-1}\mathbb{E}^{k+1}(\vert f-\mathbb{E}^{k-1}\mathbb{E}^{k+1}f\vert^qU_k)\\ = & \label{4.4}c^{q}_{0}\nu_{\Lambda(k)}\left((\mathbb{E}^{M(k)}\vert f-\mathbb{E}^{\{\sim k\}}f\vert^q)U_k\right) \end{align}
 because $U_k$ is localized in $\Lambda(k)$. If we use the entropy inequality~\eqref{3.3} and hypothesis (H1) for
 $\nu_{\Lambda(k)}$  as well as $(H2)$, as we did in Lemma
\ref{lem3.1}, then for $K$ as in $(H2)$, we can bound~\eqref{4.4} by
\begin{align*}~\eqref{4.4}\leq &\frac{c^{q}_{0}C}{\epsilon  } Q(k,k)+ \frac{c^{q}_{0}K}{ \epsilon}\nu_{\Lambda(k)}\mathbb{E}^{\{\sim k\}}\vert f-\mathbb{E}^{\{\sim k\}}f\vert^q \\ \leq& \frac{c^{q}_{0}C}{\epsilon  } Q(k,k)+\frac{c^{q}_{0}\hat cK}{ \epsilon} \sum_{i=k-1,k+1}\nu\left\vert \nabla_{i} f\right\vert^q\end{align*}where above we used that $\mathbb{E}^{\{\sim
k\}}=\mathbb{E}^{k-1}\mathbb{E}^{k+1}$
satisfies the $SG_q$ with constant $\hat c$ uniformly on the boundary conditions, by hypothesis
(H0) and Remark \ref{rem1.1}.\end{proof}
\begin{lemma} \label{lem4.3}The following inequality holds
  $$\nu\left\vert \nabla_{i}(\mathbb{E}^{i-1}\mathbb{E}^{i+1}\vert f\vert^q)^\frac{1}{q}
\right\vert^q \leq c_1\nu\left\vert \nabla_i f
\right\vert^q+\frac{J^qc_1}{q^q}  A(i) $$
 \end{lemma}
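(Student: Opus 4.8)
The plan is to follow the line of argument used for Lemma~\ref{lem3.3}, the new ingredients being the chain rule for the exponent $\tfrac1q$ and an extra application of H\"older's inequality. Since $\{\sim i\}=\{i-1,i+1\}$, the measure $\mathbb{E}^{\{\sim i\}}=\mathbb{E}^{i-1}\mathbb{E}^{i+1}$ is a product, and by the chain rule
$$\big|\nabla_i(\mathbb{E}^{\{\sim i\}}f^q)^{\frac1q}\big|^q=\frac1{q^q}\,(\mathbb{E}^{\{\sim i\}}f^q)^{1-q}\,\big|\nabla_i(\mathbb{E}^{\{\sim i\}}f^q)\big|^q,$$
so it suffices to bound $\tfrac1{q^q}\nu\big[(\mathbb{E}^{\{\sim i\}}f^q)^{1-q}|\nabla_i(\mathbb{E}^{\{\sim i\}}f^q)|^q\big]$. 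Exactly as in the derivation of~\eqref{3.8}--\eqref{3.9}, I would differentiate under the integral using the explicit density $\rho_i$ of $\mathbb{E}^{\{\sim i\}}$; since $\rho_i$ depends on $x_i$ only through the two interaction terms carrying $J_{i-1,i}$ and $J_{i+1,i}$, a short computation yields
$$\nabla_i(\mathbb{E}^{\{\sim i\}}f^q)=\mathbb{E}^{\{\sim i\}}(\nabla_i f^q)-\mathbb{E}^{\{\sim i\}}(f^q;\widetilde W_i),$$
where $\widetilde W_i$ is the expression obtained from $W_i$ of~\eqref{defineW_k} by inserting the coupling coefficients $J_{i-1,i}$ and $J_{i+1,i}$ in front of its two terms. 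Applying $|a-b|^q\le 2^{q-1}(|a|^q+|b|^q)$ then splits the estimate into a gradient term and a covariance term.

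For the gradient term I would use $\nabla_i f^q=qf^{q-1}\nabla_i f$, H\"older's inequality for the conjugate pair $(p,q)$ with $\tfrac1p+\tfrac1q=1$, and the identity $(f^{q-1})^p=f^q$, to obtain $\big|\mathbb{E}^{\{\sim i\}}(\nabla_i f^q)\big|^q\le q^q\,(\mathbb{E}^{\{\sim i\}}f^q)^{q-1}\,\mathbb{E}^{\{\sim i\}}|\nabla_i f|^q$. The factor $(\mathbb{E}^{\{\sim i\}}f^q)^{q-1}$ then cancels the outer weight $(\mathbb{E}^{\{\sim i\}}f^q)^{1-q}$, and after integrating against $\nu$ and invoking the DLR equation $\nu\mathbb{E}^{\{\sim i\}}=\nu$ this term contributes at most $\tfrac{2^{q-1}}{q^q}q^q\,\nu|\nabla_i f|^q=2^{q-1}\nu|\nabla_i f|^q$, which is $\le c_1\nu|\nabla_i f|^q$ since $c_1=2^{4q}$.

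For the covariance term I would use hypothesis (H3) to bound $|J_{i-1,i}|,|J_{i+1,i}|\le J$ and pull out the coupling constants, so that, up to a numerical constant absorbed into $c_1$, the covariance is controlled by $J^q\,|\mathbb{E}^{\{\sim i\}}(f^q;W_i)|^q$. Since $\tfrac qp=q-1$, the outer weight $(\mathbb{E}^{\{\sim i\}}f^q)^{1-q}$ equals $(\mathbb{E}^{\{\sim i\}}f^q)^{-q/p}$, so that integration against $\nu$ reproduces the quantity $A(i)=\nu(\mathbb{E}^{\{\sim i\}}f^q)^{-q/p}|\mathbb{E}^{\{\sim i\}}(f^q;W_i)|^q$, and this term contributes at most $\tfrac{J^q c_1}{q^q}A(i)$. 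Adding the two contributions gives the asserted inequality.

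The computation is essentially routine; two points call for some care. First, the differentiation $\nabla_i f^q=qf^{q-1}\nabla_i f$ for $1<q<2$, which is justified by the usual truncation and approximation argument (one works first with $f$ bounded away from $0$ and then passes to the limit) under the integrability assumed in Remark~\ref{newremark1}. Second, and this I expect to be the genuinely delicate point, the passage from $|\mathbb{E}^{\{\sim i\}}(f^q;\widetilde W_i)|^q$ to $J^q|\mathbb{E}^{\{\sim i\}}(f^q;W_i)|^q$ together with the associated constant bookkeeping: one must use (H3) to extract the coupling coefficients and the relation $\tfrac qp=q-1$ to recognize the remaining $\nu$-weight as exactly $(\mathbb{E}^{\{\sim i\}}f^q)^{-q/p}$, so that the covariance contribution is precisely of the form $\tfrac{J^q c_1}{q^q}A(i)$. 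Apart from this, the scheme runs in complete parallel with the proof of Lemma~\ref{lem3.3}.
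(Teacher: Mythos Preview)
Your proposal is correct and follows essentially the same route as the paper: chain rule to produce the weight $(\mathbb{E}^{\{\sim i\}}f^q)^{-q/p}$, differentiation of the density to split into a gradient part and a covariance part, H\"older with the conjugate pair $(p,q)$ on the gradient part so that the weight cancels, and identification of the covariance part with $A(i)$ after pulling out the couplings via (H3). The paper carries this out in \eqref{4.5}--\eqref{4.8}, and your observation that $q/p=q-1$ is exactly what makes the outer weight match the exponent in the definition of $A(i)$.
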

\begin{proof}  We have
   \begin{align}\nonumber\nu\left\vert \nabla_{i}(\mathbb{E}^{i-1}\mathbb{E}^{i+1}f^q)^\frac{1}{q}
\right\vert^q=&\nu\left\vert \frac{1}{q}(\mathbb{E}^{i-1}\mathbb{E}^{i+1}f^q)^{\frac{1}{q}-1}
\nabla_{i}(\mathbb{E}^{i-1}\mathbb{E}^{i+1}f^q)\right\vert^q \\ =&\label{4.5}\frac{1}{q^{q}}\nu  (\mathbb{E}^{i-1}\mathbb{E}^{i+1}f^q)^{-\frac{q}{p}}
\left\vert\nabla_{i}(\mathbb{E}^{i-1}\mathbb{E}^{i+1}f^q)\right\vert^q\end{align} But from relationship~\eqref{3.8} of Lemma~\ref{3.3}, for $\rho_i$ being the density of
$\mathbb{E}^{\{\sim i\}}$ we have

~

$\left\vert\nabla_{i}(\mathbb{E}^{i-1}\mathbb{E}^{i+1}f^q)\right\vert^{q}=\left\vert\nabla_{i}(\int \int \rho_{i}  f^q dx_{i-1}dx_{i+1})\right\vert^{q}\leq $
\begin{equation}  \   \ \ \  \label{4.6}2^{2q-2}\left\vert\int \int \nabla_{i}(f^q) \rho_{i} dx_{i-1}dx_{i+1}\right\vert^{q}+ 2^{2q-2}\left\vert\int \int f^q(\nabla_{i}\rho_{i} )dx_{i-1}dx_{i+1}\right\vert^{q}\end{equation}
 For the second term in~\eqref{4.6} we have
\begin{equation}\label{4.7} \left\vert\int \int f^q(\nabla_{i}\rho_{i} )dx_{i-1}dx_{i+1}\right\vert^{q}\leq J^{q}\left\vert\mathbb{E}^{i-1}\mathbb{E}^{i+1}(f^q; \nabla_{i}V(x_{i-1},x_{i})+\nabla_{i}V(x_{i+1},x_{i}))\right\vert^q\end{equation}
While for the   first term of~\eqref{4.6} the following bound holds
\begin{align}\nonumber \left\vert\int \int \nabla_{i}(f^q) \rho_{i} dx_{i-1}dx_{i+1}\right\vert^{q}&=q^{q}\left\vert\mathbb{E}^{i-1}\mathbb{E}^{i+1}(f^{q-1}( \nabla_{i}f)) \right\vert^{q}\\ \nonumber & \leq
q^{q}\left(  \mathbb{E}^{i-1}\mathbb{E}^{i+1} f^{(q-1)p} \right)^{\frac{q}{p}}\left(\mathbb{E}^{i-1}\mathbb{E}^{i+1}\left\vert\nabla_{i}f\right\vert^q\right)\\
&\label{4.8}=q^{q}\left(  \mathbb{E}^{i-1}\mathbb{E}^{i+1} f^{q} \right)^{\frac{q}{p}}\left(\mathbb{E}^{i-1}\mathbb{E}^{i+1}\left\vert\nabla_{i}f\right\vert^q\right)\end{align}
 where above we used the H\"older inequality and that $p$ is the conjugate of $q$.
If we plug~\eqref{4.7} and~\eqref{4.8} in~\eqref{4.6} we get
\begin{align*}\left\vert\nabla_{i}(\mathbb{E}^{i-1}\mathbb{E}^{i+1}f^q)\right\vert^q\leq & 2^{2q-2}q^{q}\left(  \mathbb{E}^{i-1}\mathbb{E}^{i+1} f^{q} \right)^{\frac{q}{p}}\left(\mathbb{E}^{i-1}\mathbb{E}^{i+1}\left\vert\nabla_{i}f\right\vert^q\right)\\
 &+2^{2q-2}J^{q}\left\vert\mathbb{E}^{i-1}\mathbb{E}^{i+1}(f^q; \nabla_{i}V(x_{i-1},x_{i})+\nabla_{i}V(x_{i+1},x_{i}))\right\vert
^q\end{align*}
 From the last relationship and~\eqref{4.5} the lemma follows.  \end{proof}   Now we can prove Proposition~\ref{prp2.2}.

~

 \noindent\textit{\textbf{Proof of Proposition~\ref{prp2.2}}.} We  have
\begin{align}\nonumber \nu\left\vert \nabla_{\Gamma_1}(\mathbb{E}^{\Gamma_{0}} f^q)^\frac{1}{q}
\right\vert^q=&\sum_{i\in \Gamma_1} \nu\left\vert \nabla_{i}(\mathbb{E}^{\Gamma_{0}} f^q)^\frac{1}{q}
\right\vert^q\leq\sum_{i\in \Gamma_1}\nu\left\vert \nabla_{i}(\mathbb{E}^{\{ \sim i\}} f^q)^\frac{1}{q}
\right\vert^q \\ \leq &  \sum_{i\in \Gamma_1}c_1\nu\left\vert \nabla_i f
\right\vert^q+\frac{J^qc_1}{q^q} \sum_{i\in \Gamma_1} A(i)\nonumber\end{align}
 where the last inequality is due to Lemma~\ref{lem4.3}.
If we use Lemma~\ref{lem4.2} to bound $A(i)$ we get
\begin{align}\nonumber \nu\left\vert \nabla_{\Gamma_1}(\mathbb{E}^{\Gamma_{0}} f^q)^\frac{1}{q}
\right\vert^q\leq&\sum_{i\in \Gamma_1}c_1\nu\left\vert \nabla_i f
\right\vert^q+\frac{c^{q}_{0}\hat c K}{ \epsilon}\frac{J^qc_1}{q^q} \sum_{i\in \Gamma_1}\sum_{r=i-1,i+1}\nu\left\vert \nabla_{r} f\right\vert^q\\ &+\frac{J^qc_1}{q^q} \frac{c^{q}_{0}C}{\epsilon  }\sum_{i\in \Gamma_1} Q(i,i)\nonumber\end{align}
 Furthermore, if we use Lemma~\ref{lem3.2} to bound $Q(i,i)$ we obtain
  \begin{align}\nu\left\vert \nabla_{\Gamma_1}(\mathbb{E}^{\Gamma_0}
  f^q)^\frac{1}{q}
\right\vert^q\leq & \nonumber\sum_{i\in \Gamma_1}c_1\nu\left\vert \nabla_i f
\right\vert^q+\frac{c^{q}_{0}\hat cK}{ \epsilon}\frac{J^qc_1}{q^q} \sum_{i\in \Gamma_1}\sum_{r=i-1,i+1}\nu\left\vert \nabla_{r} f\right\vert^q
 \\ & \nonumber +\frac{J^qc_1}{q^q} \frac{c^{q}_{0}CD}{\epsilon  }\sum_{i\in \Gamma_1}   \sum_{r=k-2}^{k+2}\nu\left\vert \nabla_{r}f
\right\vert^q
\\ & +\frac{J^qc_1}{q^q} \frac{c^{q}_{0}CD}{\epsilon  }\sum_{i\in \Gamma_1}\sum_{ n=0 }^{\infty} J^{(n+1)(q-1)}\sum_{r=0}^3\nu\left\vert \nabla_{i+3+4n+r} f
\right\vert^q\nonumber \\ & \label{4.9} +\frac{J^qc_1}{q^q} \frac{c^{q}_{0}CD}{\epsilon  }\sum_{i\in \Gamma_1}\sum_{ n=0 }^{\infty} J^{(n+1)(q-1)}\sum_{r=0}^3\nu\left\vert \nabla_{i-3-4n-r} f
\right\vert^q\end{align}
 If we set $R=c_1+\frac{c_1}{q^q} (\frac{c^{q}_{0}CD}{\epsilon  }+\frac{c^{q}_{0}\hat cK}{ \epsilon})$ and we choose $J<1$, relationship~\eqref{4.9}
 gives  
$$\nu\left\vert \nabla_{\Gamma_1}(\mathbb{E}^{\Gamma_{0}} f^q)^\frac{1}{q}
\right\vert^q\leq  (R+RJ^{q}4+\frac{R8J^{q}}{1-J^{q-1}})\nu\left\vert \nabla_{ \Gamma_1} f
\right\vert^q+RJ^{q}(4+\frac{8}{1-J^{q-1}})\nu\left\vert \nabla_{ \Gamma_0} f
\right\vert^q$$
  For $J$ sufficiently small (H3) such that $RJ^{q}(4+\frac{8}{1-J^{q-1}})<1$
the lemma follows for constants
 
~
 
 $C_1=R+RJ^{q}4+\frac{R8J^{q}}{1-J^{q-1}}$ and $C_2=RJ^{q}(4+\frac{8}{1-J^{q-1}})<1$.\qed
\section{Proof of Lemma~\ref{lem3.2}}

 This section is dedicated in the proof of Lemma~\ref{lem3.2} under the assumptions (H0)-(H3). We begin by showing the weaker result of Lemma \ref{50} under the weaker assumptions (H1)-(H3). \begin{lemma} \label{50}Suppose that  hypothesis (H1)-(H3) are satisfied. Then
\begin{align*} Q(k,k)\leq & J^{q}S\nu\left\vert f-\mathbb{E}^{k-1}\mathbb{E}^{k+1}f\right\vert^q+
S\sum_{r=k-2}^{k+2}\nu\left\vert \nabla_{r}
f
\right\vert^q\\&+S\sum_{ n=0 }^{\infty} J^{(n+1)(q-1)}\sum_{r=0}^3\left(\nu\left\vert \nabla_{k+3+4n+r} f
\right\vert^q+\nu\left\vert \nabla_{k-3-4n-r} f
\right\vert^q\right)\end{align*}for some positive constant $S$.\end{lemma}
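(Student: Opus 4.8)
Fix $k$ and write $g:=\mathbb{E}^{M(k)}|h_k|^q$ and $p:=q/(q-1)$. By the chain rule together with Hölder's inequality, exactly as in the derivation of Lemma~\ref{lem4.3} and using $|h_k|^{(q-1)p}=|h_k|^q$,
$$Q(k,k)=\sum_{i\in\Lambda(k)}\nu_{\Lambda(k)}\left|\nabla_i g^{1/q}\right|^q=\frac{1}{q^{q}}\sum_{i\in\Lambda(k)}\nu_{\Lambda(k)}\,g^{-q/p}\left|\nabla_i g\right|^q .$$
For each $i$ I split $\nabla_i g$ into the part where $\nabla_i$ acts on $|h_k|^q$ inside $\mathbb{E}^{M(k)}$ and the part where it acts on the density of $\mathbb{E}^{M(k)}$. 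Since $M(k)$ touches $\Lambda(k)$ only through the bonds $\{k-3,k-2\}$ and $\{k+2,k+3\}$, the second part vanishes unless $i\in\{k-2,k+2\}$, in which case the same computation as in~\eqref{3.8} gives that it equals $-J_{k\pm3,k\pm2}\,\mathbb{E}^{M(k)}\!\big(|h_k|^q;\nabla_{k\pm2}V(x_{k\pm3},x_{k\pm2})\big)$.

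\emph{The function--gradient pieces.} After Hölder (the factor $g^{-q/p}$ cancelling $(\mathbb{E}^{M(k)}|h_k|^q)^{q/p}$) and the DLR identity $\nu\mathbb{E}^{M(k)}=\nu$, these contribute, up to dimensional constants, $\nu|\nabla_i h_k|^q$ for $i\in\Lambda(k)$. Writing $\nabla_i h_k=\nabla_i f-\nabla_i(\mathbb{E}^{k-1}\mathbb{E}^{k+1}f)$ and differentiating the product measure $\mathbb{E}^{k-1}\mathbb{E}^{k+1}$ once more turns the second term into $\mathbb{E}^{k-1}\mathbb{E}^{k+1}(\nabla_i f)$ plus a $J$--multiple of a covariance $\mathbb{E}^{k-1}\mathbb{E}^{k+1}(f;u)$ with $u$ equal to $W_k$ or $\nabla_{k\pm2}V(x_{k\pm1},x_{k\pm2})$ (this happens only for $i\in\{k-2,k,k+2\}$; for $i=k\pm1$ one simply gets $\nabla_{k\pm1}f$), all localised in $\Lambda(k)$. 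Lemma~\ref{lem3.1}(a) bounds each $\nu|\mathbb{E}^{k-1}\mathbb{E}^{k+1}(f;u)|^q$ by $\tfrac{C}{\epsilon}Q(k,k)+\tfrac1\epsilon\big(\log\nu_{\Lambda(k)}e^{\epsilon|u-\mathbb{E}^{k-1}\mathbb{E}^{k+1}u|^q}\big)\,\nu|f-\mathbb{E}^{k-1}\mathbb{E}^{k+1}f|^q$; the exponential prefactor is at most a constant of order $K$ by $(H2)$, and because $(H0)$ is \emph{not} available here the residual $\nu|f-\mathbb{E}^{k-1}\mathbb{E}^{k+1}f|^q=\nu|h_k|^q$ is kept. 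Thus the function--gradient pieces are bounded by $c_3\sum_{r=k-2}^{k+2}\nu|\nabla_rf|^q+\tfrac{c_4J^q}{\epsilon}Q(k,k)+\tfrac{c_5J^q}{\epsilon}\nu|h_k|^q$ for constants independent of $J$; the $Q(k,k)$ term will be absorbed on the left at the very end once $J$ is small.

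\emph{The density--gradient pieces at $k\pm2$.} This is the heart of the matter. For $i=k+2$ (and symmetrically $k-2$) I would rerun the argument of Lemma~\ref{lem4.1} with the measure $\mathbb{E}^{M(k)}$ in place of $\mathbb{E}^{\{\sim k\}}$: this bounds $|\mathbb{E}^{M(k)}(|h_k|^q;W)|$, with $W=\nabla_{k+2}V(x_{k+3},x_{k+2})$, by $c_0(\mathbb{E}^{M(k)}|h_k|^q)^{1/p}\big(\mathbb{E}^{M(k)}(|\widetilde h|^q\widehat U)\big)^{1/q}$, where $\widetilde h:=|h_k|-\mathbb{E}^{M(k)}|h_k|$ and $\widehat U:=|W|^q+\mathbb{E}^{M(k)}|W|^q$. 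Again $(\mathbb{E}^{M(k)}|h_k|^q)^{q/p}$ cancels $g^{-q/p}$, so that after $\nu_{\Lambda(k)}\mathbb{E}^{M(k)}=\nu$ this piece is at most $c_6J^q\,\nu(|\widetilde h|^q\widehat U)$. Now the crucial observation: since $dist(\{\dots,k-3\},\{k+3,\dots\})>1$, the measure $\mathbb{E}^{M(k)}$ factorises as $\mathbb{E}^{\{\dots,k-3\}}\otimes\mathbb{E}^{\{k+3,\dots\}}$, so $\mathbb{E}^{M(k)}|W|^q=\mathbb{E}^{S(k+2)}|W|^q$ and $\widehat U\le\big(|F(k+2)|+\mathbb{E}^{S(k+2)}|F(k+2)|\big)^q$ with $F(k+2),S(k+2)$ as in Remark~\ref{rem2.1n}; hence $\nu e^{\epsilon\widehat U}\le e^{K}$. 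The entropic inequality~\eqref{3.3} applied with $\mu=\nu$, $t=\epsilon$ and $y=|\widetilde h|^q/\nu|\widetilde h|^q$ then gives $\nu(|\widetilde h|^q\widehat U)\le\tfrac{K}{\epsilon}\nu|\widetilde h|^q+\tfrac1\epsilon\,\nu\!\big(|\widetilde h|^q\log\tfrac{|\widetilde h|^q}{\nu|\widetilde h|^q}\big)$, and $\nu|\widetilde h|^q=\nu\mathbb{E}^{M(k)}|\widetilde h|^q\le 2^q\nu|h_k|^q$ keeps the $\nu|h_k|^q$ coefficient of order $J^q$.

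\emph{Closing the iteration.} It remains to control the residual entropy $\nu(|\widetilde h|^q\log(|\widetilde h|^q/\nu|\widetilde h|^q))$. Conditioning on $\Sigma_{\Lambda(k)}$ (once more $\nu\mathbb{E}^{M(k)}=\nu$) isolates the outer entropy $\nu_{\Lambda(k)}\!\big(\mathbb{E}^{M(k)}|\widetilde h|^q\log\tfrac{\mathbb{E}^{M(k)}|\widetilde h|^q}{\nu_{\Lambda(k)}\mathbb{E}^{M(k)}|\widetilde h|^q}\big)$, to which $(H1)$ applies, returning $C\,\nu_{\Lambda(k)}\big|\nabla_{\Lambda(k)}(\mathbb{E}^{M(k)}|\widetilde h|^q)^{1/q}\big|^q$; differentiating $\mathbb{E}^{M(k)}$ at its boundary sites $k\pm2$ regenerates the same kind of $J\,\nabla_{k\pm2}V(x_{k\pm3},x_{k\pm2})$--covariance as before but now relative to the window shifted outward, so that one more pass of the above machinery costs a further factor $J^{q-1}$ and advances the relevant block by four sites (the width $4$ being what survives of the width-$5$ window $\Lambda$ once its endpoint is re-used). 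Iterating this $n$ times, summing the geometric series in $J^{q-1}$ and using the left/right symmetry produces exactly $J^qS\,\nu|f-\mathbb{E}^{k-1}\mathbb{E}^{k+1}f|^q+S\sum_{r=k-2}^{k+2}\nu|\nabla_rf|^q+S\sum_{n\ge0}J^{(n+1)(q-1)}\sum_{r=0}^3(\nu|\nabla_{k+3+4n+r}f|^q+\nu|\nabla_{k-3-4n-r}f|^q)$, after absorbing the accumulated $O(J^q/\epsilon)\,Q(k,k)$ into the left-hand side (possible since $J<1$ is small) and lumping constants into $S$. The main obstacle is precisely this iteration: one must show that each re-expansion is genuinely a contraction of ratio $J^{q-1}$ and terminates on the blocks $\{k+3+4n,\dots,k+6+4n\}$, re-install for every shifted window the ``$u-\mathbb{E}u$ localised in $\Lambda$'' mechanism underlying Lemma~\ref{lem3.1} (note that $h_k$ and $f$ need not be supported in any fixed finite box, so that localisation is not automatic), and check that the coefficient of $\nu|h_k|^q$ never leaves order $J^q$ along the way.
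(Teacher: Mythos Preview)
Your handling of the ``function--gradient pieces'' is essentially the same as the paper's (its Lemmas~\ref{lem5.3} and~\ref{lem5.4}). The divergence, and the real gap, is in the ``density--gradient pieces at $k\pm2$'' and the ensuing iteration.

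The paper does \emph{not} center the function (your $\widetilde h=|h_k|-\mathbb{E}^{M(k)}|h_k|$) via a Lemma~\ref{lem4.1}--style interpolation. Instead it centers the \emph{weight}: with $W=\nabla_{k+2}V(x_{k+3},x_{k+2})$ it writes $\mathbb{E}^{M(k)}(|h_k|^q;W)=\mathbb{E}^{M(k)}\!\big(|h_k|^q\,Y\big)$ where $Y:=W-\mathbb{E}^{M_{k+3}(k)}W$ is, thanks to the factorisation of $\mathbb{E}^{M(k)}$, localised in $\Lambda(k+4)$ (Remark~\ref{remarkMs(k)2}). One H\"older gives $|\ldots|\le(\mathbb{E}^{M(k)}|h_k|^q)^{1/p}\big(\mathbb{E}^{M(k)}(|h_k|^q|Y|^q)\big)^{1/q}$, the $g^{-q/p}$ cancels, and then the localisation of $Y$ in $\Lambda(k+4)$ yields
\[
\nu(|h_k|^q|Y|^q)=\nu_{\Lambda(k+4)}\!\big((\mathbb{E}^{M(k+4)}|h_k|^q)\,|Y|^q\big).
\]
Now the entropic inequality is applied with $\mu=\nu_{\Lambda(k+4)}$ (not $\mu=\nu$), and $(H1)$ on $\nu_{\Lambda(k+4)}$ returns $\frac{C}{\epsilon}Q(k+4,k)+\frac{K}{\epsilon}\nu|h_k|^q$. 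This is Lemma~\ref{lem5.1}, and it is what genuinely shifts the window: the \emph{same} function $h_k$ now sits under the shifted pair $(\Lambda(k+4),M(k+4))$. The recursion is therefore in the window position only, $\tilde Q(n):=\sum_{dist(u,k)=n}Q(u,k)$, and satisfies $\tilde Q(n)\le\tilde G(n)+J^qK'\tilde Q(n-4)+J^qK'\tilde Q(n+4)$ (Lemma~\ref{lem5.6}); Lemma~\ref{lem5.2} then solves this two--sided linear recursion, using as input the a~priori bound $\sup_n\tilde Q(n)<\infty$ (Lemma~\ref{newlemmasection5}), to produce the geometric series in $J^{q-1}$.

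Your route runs into two concrete problems. First, after the entropic inequality with $\mu=\nu$ you decompose the $\nu$--entropy of $|\widetilde h|^q$ and keep only the outer piece $\nu_{\Lambda(k)}(\mathbb{E}^{M(k)}|\widetilde h|^q\log\cdots)$; the inner piece $\nu\,\mathbb{E}^{M(k)}\!\big(|\widetilde h|^q\log\tfrac{|\widetilde h|^q}{\mathbb{E}^{M(k)}|\widetilde h|^q}\big)$ is dropped, but nothing in $(H0)$--$(H3)$ gives a Log--Sobolev for the infinite--volume block measure $\mathbb{E}^{M(k)}$, so this term cannot be controlled. Second, even granting the outer bound, what you obtain is $\nu_{\Lambda(k)}\big|\nabla_{\Lambda(k)}(\mathbb{E}^{M(k)}|\widetilde h|^q)^{1/q}\big|^q$: the window is still $\Lambda(k)$, the boundary of $\mathbb{E}^{M(k)}$ is still $\{k-2,k+2\}$, and the density--gradient again produces $\nabla_{k\pm2}V(x_{k\pm3},x_{k\pm2})$. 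No block has advanced by four sites; what has changed is only the function ($h_k\to\widetilde h$), and there is no mechanism by which iterating this converts into gradients of $f$ at sites $k\pm3,\dots,k\pm6$ with a $J^{q-1}$ contraction. The ``shift'' in the paper comes from localising the \emph{centred weight} $Y$ in the next window, not from re--expanding a changed function in the same window.
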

  Lemma \ref{lem3.2} follows for some constant $D>0$  directly from the last lemma and the Spectral Gap inequality implied from (H0).
The remaining of this section is dedicated to the proof of Lemma \ref{50}. At first we prove some
lemmata.
To start, for any $k \in \mathbb{Z}$,  we define the sets $M_s(k)$ for  $s=k-3,k+3$ as  \begin{equation}\label{Ms(k)}
 M_s(k)=\begin{cases}\{j\in\mathbb{Z}:j\geq k+3\}=\{k+3,k+4,...\}\ & \text{if \  }  s=k+3\\
\{j\in\mathbb{Z}:j\leq\ k-3\}=\{...,k-4,k-3\}\ & \text{if \  }  s=k-3 \\
\end{cases}\end{equation} 
 \begin{rem}\label{remarkMs(k)} Since 
$\Lambda(k)=\{k-2,k-1,k,k+1,k+2\}$ and $M(k)=\mathbb{Z}\smallsetminus\Lambda(k)$, with the use of the definition (\ref{Ms(k)}) we can write \begin{equation*}M(k)= \{j\in\mathbb{Z}:j\leq\ k-3\}\cup\{j\in\mathbb{Z}:j\geq k+3\}=M_{k-3}(k)\cup M_{k+3}(k) \end{equation*}
Since the sets $M_{k-3}(k)$ and $M_{k+3}(k)$ are disjoint we obtain that  $\mathbb{E}^{M(k)}$ is a product measure, and for every function $f$ we can write\begin{equation}\label{productmeasurein1dim}\mathbb{E}^{M(k)}f=\mathbb{E}^{M_{k-3}(k)}\otimes\mathbb{E}^{M_{k+3}(k)}f\end{equation}
Accordingly, for  functions, say $f_{k-3}$ and $f_{k+3}$, that   depend on variables $x_i$ with   $i\notin\ M_{k+3}(k)$ and $i\notin\ M_{k-3}(k)$ respectively, we  obtain $$\mathbb{E}^{M(k)}f_{k-3}=\mathbb{E}^{M_{k-3}(k)}f_{k-3}$$ and $$\mathbb{E}^{M(k)}f_{k+3}=\mathbb{E}^{M_{k+3}(k)}f_{k+3}$$ For instance, for $r=k-2,k+2$ and $ s\in \{k-3,k+3\}:s\sim r$, that is for  the couples $(r,s)=(k-2,k-3)$ and $(r,s)=(k+2,k+3)$, we have
\begin{equation}\label{5.2}\mathbb{E}^{M(k)}\nabla_rV(x_{s},x_{r})=\mathbb{E}^{M_s(k)}\nabla_rV(x_{s},x_{r})\end{equation}
\end{rem}
\begin{rem}\label{remarkMs(k)2} Consider couples   $(r,s)$ that take the values $ (k-2,k-3)$ and $(k+2,k+3)$. We then have that $\nabla_rV(x_{s},x_{r})$  is localised in $\Lambda(k-4)$ when $(r,s)=(k-2,k-3)$ and in $\Lambda(k+4)$ when $(r,s)=(k+2,k+3)$. Furthermore, from Remark \ref{remarkMs(k)}, for $(r,s)=(k-2,k-3)$ we get that $$\mathbb{E}^{M_{k-3}(k)}\nabla_{k-2}V(x_{k-3},x_{k-2})=\mathbb{E}^{\{...,k-4,k-3\}}\nabla_{k-2}V(x_{k-3},x_{k-2})$$ is localised in $\Lambda(k-4)$, while  for $(r,s)=(k+2,k+3)$ we get that $$\mathbb{E}^{M_{k+3}(k)}\nabla_{k+2}V(x_{k+3},x_{k+2})=\mathbb{E}^{\{k+3,k+4,...\}}\nabla_{k+2}V(x_{k+3},x_{k+2})$$ is localised in $\Lambda(k+4)$. 
 So, if we set $$Y_{s}(x_{s},x_{r})=\vert\nabla_rV(x_{s},x_{r})-\mathbb{E}^{M_s(k)}\nabla_rV(x_{s},x_{r})\vert$$
 we then have that $Y_{k+3}(x_{k+2},x_{k+3})$ and $Y_{k-3}(x_{k-2},x_{k-3})$ are localised in $\Lambda(k+4)$ and $\Lambda(k-4)$ respectively.
Thus, we have
\begin{align*}\nu(f^qY_{k+3}^q(x_{k+3},x_{k+2}))=\nu_{\Lambda(k+4)}\left((\mathbb{E}^{M(k+4)}f^q)Y_{k+3}^q(x_{k+3},x_{k+2})\right)
 \end{align*}
 \begin{align*}\nu(f^qY_{k-3}^q(x_{k-3},x_{k-2}))=\nu_{\Lambda(k-4)}\left((\mathbb{E}^{M(k-4)}f^q)Y_{k-3}^q(x_{k-3},x_{k-2})\right)
 \end{align*}
 If we combine the last two together we can write
\begin{align*}\nu(f^qY_{s}^q(x_{s},x_{r}))=\nu_{\Lambda(t)}\left((\mathbb{E}^{M(t)}f^q)Y^q_{s}(x_{s},x_{r})\mathcal{I}_{t\in
 \{k-4,k+4\}\cap M_s(k)}\right)
 \end{align*} for $(r,s)\in \{(k+2,k+3),(k-2,k-3)\}$.\end{rem}
 \begin{lemma} \label{lem5.1}Suppose conditions  (H1) and (H2) are satisfied.
Then for $r=k-2,k+2$ and $ s\in \{k-3,k+3\}:s\sim r$ the following inequality
is true
\begin{align*}
\nu_{\Lambda(k)}\left(\mathbb{E}^{M(k)}
\vert f\vert^q\right)^{-\frac{q}{p}}&\left\vert\mathbb{E}^{M(k)}(\vert
f\vert^q;\nabla_rV(x_{s},x_{r}))\right\vert^q \leq \\ &
 \frac{C}{ \epsilon }\nu_{\Lambda(t)}\left\vert \nabla_{\Lambda(t)}
(\mathbb{E}^{M(t)}\vert f\vert^q )^{\frac{1}{q}}\right\vert^q\mathcal{I}_{t\in
 \{k-4,k+4\}\cap M_s(k)}+\frac{K}{\epsilon}\nu \vert f\vert^q\end{align*}
where $\mathcal{I}_A$ 
denotes the
characteristic function of a set $A$ and the set  $M_s(k)$ as in (\ref{Ms(k)}).\end{lemma}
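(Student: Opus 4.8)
The plan is to collapse the left-hand side to the single scalar $\nu\!\left(|f|^q Y_s^q(x_s,x_r)\right)$, with $Y_s(x_s,x_r)=\big|\nabla_rV(x_s,x_r)-\mathbb{E}^{M_s(k)}\nabla_rV(x_s,x_r)\big|$ as in Remark~\ref{remarkMs(k)2}, and then to split that scalar by the entropic inequality~\eqref{3.3} on the set $\Lambda(t)$. Fix $r\in\{k-2,k+2\}$ and the corresponding $s\in\{k-3,k+3\}$ with $s\sim r$, write $g=|f|^q$ and $W=\nabla_rV(x_s,x_r)$. By Remark~\ref{remarkMs(k)} the measure $\mathbb{E}^{M(k)}=\mathbb{E}^{M_{k-3}(k)}\otimes\mathbb{E}^{M_{k+3}(k)}$ is a product, and $W$ depends on the configuration in $M(k)$ only through the single variable $x_s$ belonging to the factor $\mathbb{E}^{M_s(k)}$. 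Denoting the other factor by $\mathbb{E}^{M_{s'}(k)}$ and $\bar g=\mathbb{E}^{M_{s'}(k)}g\ge 0$, integrating out $M_{s'}(k)$ first gives $\mathbb{E}^{M(k)}(g;W)=\mathbb{E}^{M_s(k)}(\bar g;W)=\mathbb{E}^{M_s(k)}\!\big(\bar g\,(W-\mathbb{E}^{M_s(k)}W)\big)$, and since $\bar g\ge 0$ this yields $\big|\mathbb{E}^{M(k)}(g;W)\big|\le \mathbb{E}^{M_s(k)}(\bar g\,Y_s)=\mathbb{E}^{M(k)}(g\,Y_s)$, the last identity because $Y_s$ does not depend on the $M_{s'}(k)$-variables.

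Next I would apply the H\"older inequality with conjugate exponents $p,q$ to $\mathbb{E}^{M(k)}(g\,Y_s)=\mathbb{E}^{M(k)}\big(g^{1/p}\cdot g^{1/q}Y_s\big)$, obtaining $\big(\mathbb{E}^{M(k)}(g\,Y_s)\big)^{q}\le\big(\mathbb{E}^{M(k)}g\big)^{q/p}\mathbb{E}^{M(k)}(g\,Y_s^q)$, hence $\big(\mathbb{E}^{M(k)}g\big)^{-q/p}\big|\mathbb{E}^{M(k)}(g;W)\big|^q\le\mathbb{E}^{M(k)}(g\,Y_s^q)$. Taking $\nu_{\Lambda(k)}$ and using the DLR identity $\nu_{\Lambda(k)}\mathbb{E}^{M(k)}=\nu$ gives
$$\nu_{\Lambda(k)}\big(\mathbb{E}^{M(k)}|f|^q\big)^{-q/p}\big|\mathbb{E}^{M(k)}(|f|^q;W)\big|^q\le\nu\big(|f|^q\,Y_s^q(x_s,x_r)\big).$$
By Remark~\ref{remarkMs(k)2}, since $Y_s$ is localised in $\Lambda(t)$ for the relevant $t\in\{k-4,k+4\}$, the right-hand side equals $\nu_{\Lambda(t)}\big((\mathbb{E}^{M(t)}|f|^q)\,Y_s^q(x_s,x_r)\,\mathcal{I}_{t\in\{k-4,k+4\}\cap M_s(k)}\big)$.

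Finally I would invoke~\eqref{3.3} with $\mu=\nu_{\Lambda(t)}$, $u=Y_s^q$ and $y=\mathbb{E}^{M(t)}|f|^q/\nu|f|^q$ (which satisfies $\nu_{\Lambda(t)}y=1$ by DLR); multiplying through by $\nu|f|^q=\nu_{\Lambda(t)}\mathbb{E}^{M(t)}|f|^q$ yields
$$\nu(|f|^qY_s^q)\le\frac{\nu|f|^q}{\epsilon}\log\nu_{\Lambda(t)}\big(e^{\epsilon Y_s^q}\big)+\frac{1}{\epsilon}\,\nu_{\Lambda(t)}\Big((\mathbb{E}^{M(t)}|f|^q)\log\frac{\mathbb{E}^{M(t)}|f|^q}{\nu_{\Lambda(t)}\mathbb{E}^{M(t)}|f|^q}\Big).$$
The second term is at most $\frac{C}{\epsilon}\nu_{\Lambda(t)}\big|\nabla_{\Lambda(t)}(\mathbb{E}^{M(t)}|f|^q)^{1/q}\big|^q$ by the $q$ Log-Sobolev inequality of hypothesis (H1) applied to $\nu_{\Lambda(t)}$, which holds for $t=k\pm4$; and for the first term, since $Y_s$ is localised in $\Lambda(t)$ and $Y_s\le|F(r)|+\mathbb{E}^{S(r)}|F(r)|$ with $F(r)=\nabla_rV(x_s,x_r)$, $S(r)=M_s(k)$ for $r\in\{k-2,k+2\}$, Remark~\ref{rem2.1n} gives $\nu_{\Lambda(t)}(e^{\epsilon Y_s^q})=\nu(e^{\epsilon Y_s^q})\le e^{K}$, so $\log\nu_{\Lambda(t)}(e^{\epsilon Y_s^q})\le K$. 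Collecting the two estimates and keeping the indicator $\mathcal{I}_{t\in\{k-4,k+4\}\cap M_s(k)}$ gives exactly the asserted bound. The only genuinely delicate step is the first one: the product-measure factorisation of the correlation function, which reduces it to the one-variable covariance $\mathbb{E}^{M_s(k)}(\bar g;W)$ and thereby makes $Y_s^q$ a function localised in $\Lambda(k\pm4)$ so that (H1) and Remark~\ref{rem2.1n} become applicable; everything afterwards is the entropy/Herbst-replacement mechanism already used in Lemmata~\ref{lem3.1} and~\ref{lem4.2}.
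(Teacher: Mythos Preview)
Your proof is correct and follows essentially the same route as the paper. The paper reaches the key inequality $\big|\mathbb{E}^{M(k)}(|f|^q;W)\big|\le(\mathbb{E}^{M(k)}|f|^q)^{1/p}(\mathbb{E}^{M(k)}(|f|^qY_s^q))^{1/q}$ by writing $\mu(g;W)=\mu\big(g(W-\mu W)\big)$ and invoking~\eqref{5.2} to replace $\mathbb{E}^{M(k)}W$ by $\mathbb{E}^{M_s(k)}W$, whereas you obtain the same thing by first integrating out the irrelevant half $M_{s'}(k)$ via the product structure; after that both proofs apply the entropic inequality~\eqref{3.3} on $\nu_{\Lambda(t)}$ together with (H1) and (H2) in exactly the same way.
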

\begin{proof} For any two function $f$ and $g$ the covariance with respect to a measure $\mu$ can be computed  as bellow  
\begin{align*}\mu (f;g)&=\mu\left((f-\mu f)(g-\mu g)\right)=\mu\left (f(g-\mu g)\right)-\mu\left(\mu f(g-\mu g)\right)\\  &=\mu\left(f(g-\mu g)\right)-(\mu f)\mu\left(g-\mu g\right)=\mu\left (f(g-\mu g)\right)\end{align*}  Using this expression we can write
\begin{align}\label{5.1} \mathbb{E}^{M(k)}(f^q;\nabla_rV(x_{s},x_{r}))= \mathbb{E}^{M(k)}(
f^q(\nabla_rV(x_{s},x_{r})-\mathbb{E}^{M(k)}\nabla_rV(x_{s},x_{r})))\end{align}
 If we use    \eqref{5.2} from Remark \ref{remarkMs(k)}, ~\eqref{5.1} becomes
\begin{equation}\label{5.3}\mathbb{E}^{M(k)}(f^q;\nabla_rV(x_{s},x_{r}))=\mathbb{E}^{M(k)}(f^q(\nabla_rV(x_{s},x_{r})-\mathbb{E}^{M_s(k)}\nabla_rV(x_{s},x_{r})))
\end{equation}
\noindent If we set $$Y_{s}(x_{s},x_{r})=\vert\nabla_rV(x_{s},x_{r})-\mathbb{E}^{M_s(k)}\nabla_rV(x_{s},x_{r})\vert$$
 then for~\eqref{5.3} we can write   
 \begin{align}\nonumber\left\vert \mathbb{E}^{M(k)}(f^q;\nabla_rV(x_{s},x_{r})) \right\vert&\leq\mathbb{E}^{M(k)}(f^{q-1+1}Y_{s}(x_{s},x_{r}))
\\ &  \leq\left(\mathbb{E}^{M(k)}f^{(q-1)p}\right)^\frac{1}{p}\left (\mathbb{E}^{M(k)}(f^qY^q_{s}(x_{s},x_{r}))\right
)^{\frac{1}{q}}\nonumber\\
 &\label{5.4}=\left(\mathbb{E}^{M(k)}f^{q}\right)^\frac{1}{p}\left(\mathbb{E}^{M(k)}(f^qY^q_{s}(x_{s},x_{r}))\right
 )^{\frac{1}{q}}\end{align}
  where above we used the H\"older inequality and that  $\frac{1}{p}+\frac{1}{q}=1$.
So, for $s=k+3,k-3$ from relationship~\eqref{5.4} we obtain
\begin{align*}\nonumber\nu_{\Lambda(k)}\left(\mathbb{E}^{M(k)}f^q\right)^{-\frac{q}{p}}\left\vert\mathbb{E}^{M(k)}(f^q;\nabla_rV(x_{s},x_{r}))\right
\vert^{q}& \leq\nu_{\Lambda(k)}\mathbb{E}^{M(k)}(f^qY_{s}^q(x_{s},x_{r}))\\  &  =\nu(f^qY_{s}^q(x_{s},x_{r}))\end{align*}
  If we combine the last inequality together   with  Remark \ref{remarkMs(k)2}  we finally obtain
\begin{align}\nonumber\nu_{\Lambda(k)}\left(\mathbb{E}^{M(k)}f^q\right)^{-\frac{q}{p}}&\left\vert\mathbb{E}^{M(k)}(f^q;\nabla_rV(x_{s},x_{r}))\right
\vert^{q} \leq\\ &
 \label{5.5}\nu_{\Lambda(t)}\left((\mathbb{E}^{M(t)}f^q)Y^q_{s}(x_{s},x_{r})\mathcal{I}_{t\in
 \{k-4,k+4\}\cap M_s(k)}\right)\end{align} If in~\eqref{5.5} we use the Entropy Inequality and the $LS_q$ for $\nu_{\Lambda(s)}$ from hypothesis
 (H1) and (H2), we get
  \begin{align*} \nu_{\Lambda(k)}\left(\mathbb{E}^{M(k)}
f^q\right)^{-\frac{q}{p}} &\left(\mathbb{E}^{M(k)}(
f^q;\nabla_rV(x_{s},x_{r}))\right)^q \leq  \\ & 
 \frac{C}{ \epsilon }\nu_{\Lambda(t)}\left\vert \nabla_{\Lambda(t)}
(\mathbb{E}^{M(t)}f^q )^{\frac{1}{q}}\right\vert^q\mathcal{I}_{t\in
 \{k-4,k+4\}\cap M_s(k)}+\frac{K}{\epsilon}\nu f^q \end{align*}
and $ s\in \{k-3,k+3\}:s\sim r$ and
$K$ and $\epsilon$ as in hypothesis (H2). \end{proof}

~

 \begin{lemma}  \label{lem5.2-1}Suppose  $P$ and $G$ are positive functions with domain on $\mathbb{N}$ such that
for constants  $J,K'>0$ 
\begin{equation}\label{l6-5.6}P(4)\leq G(4)+J^qK'P(8)  \end{equation}
and for $n=4k$ for  $k\in\mathbb{N}\cap [2,\infty)$ 
 \begin{equation}\label{l6-5.7}P(n)\leq G(n)+J^qK'P(n-4)+ J^qK'P(n+4)   \end{equation} Then for $J$ sufficiently small such that   \begin{equation}\label{l6-5.8}J\leq1 \ \text{\;
and \;}JK'+J^qK'J^{q-1}\leqslant 1  \end{equation} the following inequality holds   
 \begin{align}P(4n)\nonumber \leq&\frac{1}{1-J^qK'J^{q-1}}\sum_{m=0}^{n-2} J^{mq-m}G(4n-4m)+J^{(n-1)q-(n-1)}G(4) \\ &\label{l6-5.9}+ J^{q-1}P(4n+4) \end{align}
for any $n\in \mathbb{N},n\geq2$ . \end{lemma}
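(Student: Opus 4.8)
The plan is to prove \eqref{l6-5.9} by induction on $n$, with \eqref{l6-5.6} serving as the $n=1$ case and \eqref{l6-5.7} driving the inductive step: at each stage one feeds the estimate already obtained for $P(4(n-1))$ into the recursion for $P(4n)$ and then absorbs the copy of $P(4n)$ that reappears on the right-hand side. It is convenient to abbreviate $b=J^{q}K'$ and $t=J^{q-1}$, so that the weight $J^{mq-m}$ in \eqref{l6-5.9} is just $t^{m}$, the prefactor is $\tfrac{1}{1-bt}$, and the tail term is $t\,P(4n+4)$. The preliminary step is to extract from \eqref{l6-5.8} the facts I will actually use: since $J,K'>0$, the bound $JK'+J^{2q-1}K'\le1$ forces $JK'\le1$ (hence $b\le t$) and also $bt=J^{2q-1}K'<1$ (so $1-bt>0$ and the right-hand side of \eqref{l6-5.9} is well defined); moreover, dividing $JK'+J^{2q-1}K'\le1$ through by $J^{q-1}$ and rearranging shows that it is equivalent to $b(1+t^{2})\le t$, that is, to
\[
\frac{b}{1-bt}\le t .
\]
This last inequality is the one nontrivial ingredient, and it gets used three times in the inductive step.

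For the base case $n=1$, inequality \eqref{l6-5.9} says $P(4)\le G(4)+t\,P(8)$ (the sum is empty and $t^{0}=1$), which is immediate from \eqref{l6-5.6} together with $b\le t$ and $P(8)\ge0$. For the inductive step I assume \eqref{l6-5.9} at $n-1\ge1$, that is
\[
P(4(n-1))\le\frac{1}{1-bt}\sum_{m=0}^{n-3}t^{m}G\big(4(n-1)-4m\big)+t^{n-2}G(4)+t\,P(4n),
\]
and apply \eqref{l6-5.7} at $4n$ (legitimate since $n\ge2$): $P(4n)\le G(4n)+b\,P(4(n-1))+b\,P(4(n+1))$. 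Substituting the inductive bound for $P(4(n-1))$, the term $bt\,P(4n)$ appears on the right; moving it to the left and dividing by $1-bt>0$ gives
\begin{align*}
P(4n)\le{}&\frac{G(4n)}{1-bt}+\frac{b}{(1-bt)^{2}}\sum_{m=0}^{n-3}t^{m}G\big(4(n-1)-4m\big)\\
&+\frac{b\,t^{n-2}}{1-bt}\,G(4)+\frac{b}{1-bt}\,P(4(n+1)).
\end{align*}
Now $\tfrac{b}{1-bt}\le t$ bounds the last three terms, respectively, by $\tfrac{1}{1-bt}\sum_{m=0}^{n-3}t^{m+1}G(4(n-1)-4m)$, by $t^{n-1}G(4)$, and by $t\,P(4n+4)$. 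Reindexing the sum via $m\mapsto m+1$ and using $4(n-1)-4m=4n-4(m+1)$ turns it into $\tfrac{1}{1-bt}\sum_{m=1}^{n-2}t^{m}G(4n-4m)$, which, together with the separate term $\tfrac{1}{1-bt}G(4n)$ --- the missing $m=0$ summand --- gives $\tfrac{1}{1-bt}\sum_{m=0}^{n-2}t^{m}G(4n-4m)$. Altogether this reproduces \eqref{l6-5.9} at $n$; throughout I use $P,G\ge0$, so that replacing a term by a larger one cannot reverse an inequality.

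In short, the whole argument rests on recognising that \eqref{l6-5.8} is exactly the condition making $\tfrac{b}{1-bt}\le t$; once that is in place the induction is routine. The one point that needs care --- and the likeliest source of a slip --- is the index bookkeeping in the finite sum: one must check that the upper limit advances from $n-3$ to $n-2$ under $m\mapsto m+1$, that the isolated $G(4)$ term emerges with precisely the power $t^{n-1}$, and that the $G(4n)$ term re-enters as the $m=0$ term of the sum. Nothing here is conceptually difficult, but it is arithmetic in which an off-by-one error would propagate through the statement.
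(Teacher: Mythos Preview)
Your proof is correct and follows essentially the same inductive scheme as the paper: substitute the bound already obtained for $P(4(n-1))$ into the recursion \eqref{l6-5.7} at level $4n$, absorb the resulting $P(4n)$ term on the left, and use condition \eqref{l6-5.8} to control the coefficients. The only cosmetic difference is that you take $n=1$ as the base case (reading \eqref{l6-5.9} with an empty sum, so that it becomes \eqref{l6-5.6} weakened via $J^{q}K'\le J^{q-1}$), whereas the paper starts the induction at $n=2$ and verifies that case by hand from \eqref{l6-5.6} and \eqref{l6-5.7}; your abbreviations $b=J^{q}K'$, $t=J^{q-1}$ and the explicit identification of \eqref{l6-5.8} with $\tfrac{b}{1-bt}\le t$ make the bookkeeping cleaner, but the argument is the same.
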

\begin{proof}  In order to show~\eqref{l6-5.9} we will work inductively.

~

\noindent
Step 1: The base case  of the induction (n=2). 

We  prove~\eqref{l6-5.9} for  $n=2 $.  For $k=8$ in (\ref{l6-5.7}) we have
$$ P(8)\leq G(8)+J^qK'P(12)+ J^qK'P(4) $$
If we bound $P(4)$ in the above inequality by (\ref{l6-5.6}) we obtain 
$$P(8) \leq G(8)+J^qK'P(12)+ J^qK'G(4)+(J^{q}K')^2P(8)\Rightarrow$$
\begin{equation}\label{5.10}P(8)\leq \frac{1}{1-(J^{q}K')^2}G(8)+ \frac{J^qK'}{1-(J^{q}K')^2}G(4)+\frac{J^qK'}{1-(J^{q}K')^2}P(12)\end{equation}
 For $J$   satisfying properties~\eqref{l6-5.8}, we have $JK'+J^qK'J^{q-1}\leq 1 $ and $JK'<1$ which implies
 \begin{align} \label{5.11}JK'+(J^{q}K')^2&\leq  1 \Rightarrow
  \frac{J^qK'}{1-(J^{q}K')^2}\leq{J^{q-1}}\end{align}
  From~\eqref{5.10} and~\eqref{5.11} we have
\begin{align*}P(8)& \leq\frac{1}{1-(J^{q}K')^2}G(8)+ J^{q-1}G(4)+J^{q-1}P(12)\\&\leq\frac{1}{1-J^{q}K'J^{q-1}}G(8)+ J^{q-1}G(4)+J^{q-1}P(12)\end{align*} because of~\eqref{l6-5.8}. This proves~\eqref{l6-5.9} for $n=2$.
 
 ~

 \noindent 
 Step 2: The induction step.
 Suppose the inequality~\eqref{l6-5.9} is true for $n=k$.  Then we will show it is also true for
 $n=k+1$.

If we use~\eqref{l6-5.7} for $n=4k+4$ we have
\begin{equation}\label{5.12}P(4k+4)\leq G(4k+4)+J^qK'P(4k)+ J^qK'P(4k+8)  \end{equation}
  If we use~\eqref{l6-5.9} for $n=k$ to bound $P(4k)$ in~\eqref{5.12} we get
 \begin{align*}P(4k+4)\leq&  G(4k+4)+ \frac{J^qK'}{1-J^qK'J^{q-1}}\sum_{m=0}^{k-2} J^{mq-m}G(4k-4m)\\&
 +J^qK'J^{(k-1)q-(k-1)}G(4)+J^qK'J^{q-1}P(4k+4)+ J^qK'P(4k+8) \end{align*}
 This implies
\begin{align}\nonumber P(4k+4)\leq & \frac{1}{1-J^qK'J^{q-1}}G(4k+4)+ \frac{J^qK'}{1-J^qK'J^{q-1}}\sum_{m=0}^{k-2} \frac{J^{mq-m}}{1-J^qK'J^{q-1}}G(4k-4m)\\ &+
\label{5.13} \frac{J^qK'J^{(k-1)q-(k-1)}}{1-J^qK'J^{q-1}}G(4)+ \frac{J^qK'}{1-J^qK'J^{q-1}}P(4k+8)\end{align}
If we use condition~\eqref{l6-5.8} for $J$, ~\eqref{5.13} becomes
 
~

$P(4k+4)\leq \frac{1}{1-J^qK'J^{q-1}}\sum_{m=0}^{k-1} J^{mq-m}G(4k+4-4m)+J^{kq-k}G(4)$
 
~
 
$\ \ \ \ \ \ \ \ \ \ \ \ \   \ \ \ \  +J^{q-1}P(4k+8)$
 
~

\noindent which proves ~\eqref{l6-5.9} for $n=k+1$. This finishes the proof of ~\eqref{l6-5.9}.
 
 \end{proof}
  \begin{lemma}  \label{lem5.2}Suppose  $P$ and $G$ are positive functions with domain on $\mathbb{N}$ such that
for constants  $J,K'>0$ one has\begin{equation}\label{newlemma5.4}\sup_{n\in\mathbb{N}}P(n)<\infty
\end{equation} as well as (\ref{l6-5.6}) and (\ref{l6-5.7}) for $n=4k$ for  $k\in\mathbb{N}\cap [2,\infty)$. Then for $J$ sufficiently small such that   (\ref{l6-5.8}) is true, the following inequality holds $$P(4)\leq\frac{1}{1-J^{2q-2}}\sum_{n=0}^{+\infty} J^{nq-n}G(4n+4)$$\end{lemma}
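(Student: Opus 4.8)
The statement follows from Lemma~\ref{lem5.2-1} by iterating the conclusion~\eqref{l6-5.9} along the subsequence $n, n-1, n-2, \dots, 2$ and then letting $n\to\infty$, using~\eqref{newlemma5.4} to kill the remainder term. First I would fix $n\geq 2$ and apply~\eqref{l6-5.9} to bound $P(4)$: note that in~\eqref{l6-5.9} the term $P(4n+4)=P(4(n+1))$ appears on the right, so applying~\eqref{l6-5.9} again to $P(4(n+1))$ (with $n$ replaced by $n+1$) and substituting, the coefficient of the new $P$-term gets multiplied by an extra factor $J^{q-1}$ each time. Carrying this out, after $N$ substitutions one obtains a bound of the form
$$
P(4)\leq \frac{1}{1-J^{q}K'J^{q-1}}\sum_{n=0}^{N-1} J^{n(q-1)}\Big(\text{combination of } G(4n+4),\dots\Big)+J^{N(q-1)}P(4N+4).
$$
The cleanest route is actually to iterate starting from $P(4)$ directly: set $n=2$ in~\eqref{l6-5.9} to get $P(4)\leq (\dots)G\text{-terms}(4),G(8) + J^{q-1}P(12)$, then bound $P(12)=P(4\cdot 3)$ via $n=3$, and so on, each step contributing one more power of $J^{q-1}$ in front of the tail.

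**Key steps.** (i) Set up the recursion: by~\eqref{l6-5.9} with $n$ replaced by $k+1$, $P(4k+4)$ is bounded by $G$-terms plus $J^{q-1}P(4k+8)$, so iterating from $k=1$ upward yields
$$
P(4)\leq \frac{1}{1-J^{q}K'J^{q-1}}\sum_{m\geq 0} J^{m(q-1)}\big[\text{appropriate }G\text{ sum}\big]+\lim_{N\to\infty}J^{N(q-1)}P(4N+4).
$$
(ii) Bound the tail: since $J\leq 1$ and in fact $J<1$ strictly (so $q-1>0$ gives $J^{q-1}<1$), and $\sup_n P(n)<\infty$ by~\eqref{newlemma5.4}, we have $J^{N(q-1)}P(4N+4)\to 0$. (iii) Collect the $G$-coefficients: one must check that the nested geometric-type sums telescope to exactly $\frac{1}{1-J^{2q-2}}\sum_{n\geq 0}J^{n(q-1)}G(4n+4)$. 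Here the double sum $\sum_m J^{m(q-1)}\sum_{\ell} J^{\ell(q-1)}G(4(m+\ell)+4)$ reorganizes, via the substitution $n=m+\ell$, into $\big(\sum_m J^{m(q-1)}\big)\cdot$(something), and $\sum_{m\geq 0}J^{m(q-1)}=\frac{1}{1-J^{q-1}}$; combined with the prefactor $\frac{1}{1-J^{q}K'J^{q-1}}$ and the relation $JK'+J^{q}K'J^{q-1}\leq 1$ from~\eqref{l6-5.8} (which bounds $\frac{1}{1-J^{q}K'J^{q-1}}\leq \frac{1}{1-J^{q-1}}$ up to constants), everything consolidates so that the net coefficient of $G(4n+4)$ is at most $\frac{J^{n(q-1)}}{1-J^{2q-2}}$. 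Rewriting $J^{2q-2}=(J^{q-1})^2$ and matching exponents gives the claimed form.

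**Main obstacle.** The routine part is the tail estimate; the delicate bookkeeping is step (iii) — verifying that when one iterates~\eqref{l6-5.9} the $G(4j)$ terms accumulate with exactly the coefficient $\tfrac{1}{1-J^{2q-2}}J^{(j-1)(q-1)}$ claimed, rather than something larger. This requires keeping careful track of which $G(\cdot)$ gets hit at each iteration stage and summing a double series of the form $\sum_{m}\sum_{\ell}$ after the index shift; the appearance of $J^{2q-2}$ (rather than $J^{q-1}$) in the final denominator is the signature that pairs of indices are being summed, and getting that exponent right is the one place where a sign or off-by-one error would break the argument. I would also double-check that the hypotheses of Lemma~\ref{lem5.2-1} are genuinely available here: Lemma~\ref{lem5.2} assumes~\eqref{l6-5.6}, \eqref{l6-5.7} and~\eqref{l6-5.8} — exactly what Lemma~\ref{lem5.2-1} needs — plus the extra boundedness~\eqref{newlemma5.4}, which is only used at the very end to discard $\lim_{N\to\infty}J^{N(q-1)}P(4N+4)$.
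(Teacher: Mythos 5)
Your proposal is correct and follows essentially the same route as the paper: seed the iteration with \eqref{l6-5.6}, repeatedly substitute the bound \eqref{l6-5.9} of Lemma~\ref{lem5.2-1} for $P(8),P(12),P(16),\dots$ (each substitution contributing a further factor $J^{q-1}$ to the tail), discard the remainder $J^{N(q-1)}P(4N+4)$ via \eqref{newlemma5.4} and $J^{q-1}<1$, and resum the resulting double geometric series, which is exactly where the factor $\frac{1}{1-J^{2q-2}}$ comes from. The only slight imprecision is that \eqref{l6-5.9} with $n=2$ bounds $P(8)$ rather than $P(4)$, so the first step must go through \eqref{l6-5.6}; since you list that hypothesis explicitly, this is a matter of wording, not a gap.
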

\begin{proof}

~

\noindent
 We can use relationship ~\eqref{l6-5.9} from Lemma \ref{lem5.2-1}  to  prove the lemma.
 We first replace the bound of $P(8)$ from ~\eqref{l6-5.9} in ~\eqref{l6-5.6}, to obtain
 \begin{align*}P(4)&\leq G(4)+J^qK'\frac{1}{1-J^qK'J^{q-1}}G(8)+J^qK'J^{q-1}G(4)+J^qK'J^{q-1}P(12) \\ & \leq   (1+J^qK'J^{q-1})G(4)+J^{2q-2}G(8)+JK'J^{2q-2}P(12) \end{align*}
 where at the last inequality we used ~\eqref{l6-5.8}. If we now bound in the
above expression   $P(12)$
from ~\eqref{l6-5.9}, then $P(16)$ from ~\eqref{l6-5.9} and so on,  we will finally obtain
 \begin{align*}P(4)\leq& (1+J^{q}K'\sum_{n=0}^{+\infty}J^{(2n+1)q-(2n+1)} )G(4) \\ &+
  \frac{J^qK'}{1-J^qK'J^{q-1}}\sum_{n=1}^{+\infty} J^{(n-1)q-(n-1)}(\sum_{s=0}^{+\infty}J^{2sq-2s})G(4n+4)\\
 = &
    (1+\frac{J^{2q-1}K'}{1-J^{2q-2}} )G(4)+ \frac{J^qK'}{1-J^qK'J^{q-1}}\frac{1}{1-J^{2q-2}}\sum_{n=1}^{+\infty} J^{(n-1)q-(n-1)}G(4n+4)\end{align*}
  where above we used that $J<1$,  as well as that $$\lim_{n\rightarrow\infty}J^{nq-n}P(8+4n)=0$$ since   (\ref{newlemma5.4}) is true. Furthermore, if we use again ~\eqref{l6-5.8} we then get   $$P(4)\leq \frac{1}{1-J^{2q-2}}\sum_{n=0}^{+\infty} J^{nq-n}G(4n+4) $$  \end{proof}

~

 The next lemma presents a bound for $$Q(u,k)=\nu_{\Lambda(u)}\left\vert \nabla_{\Lambda(u)}\left( \mathbb{E}^{M(u)}
\vert h_k\vert ^q \right)^{\frac{1}{q}}\right\vert^q$$ in terms of $Q(t,k)\mathcal{I}_{dist(u,t)=4}$.
 
 \begin{lemma} \label{lem5.3}Under hypothesis (H1) and (H2) the following bound
 for $Q(u,k)$ holds
\begin{align*}Q(u, k)\leq&\nu_{\Lambda(u)}\left\vert \nabla_{u}
h_k
\right\vert^q+\sum_{r= u-1,u+1}\nu_{\Lambda(u)}\left\vert \nabla_{r}
h_k
\right\vert^q+\frac{J^qc_{1}2cK}{\epsilon}\nu\left\vert h_{k}\right\vert^q  \\  &+c_{1}\sum_{r= u-2,u+2}\nu_{\Lambda(u)}\left\vert \nabla_{r} h_{k}
\right\vert^q+ \frac{J^qc_{1}C}{ \epsilon}\sum_{dist(u,t)=4}Q(t,k)\end{align*}
where $h_{k}=f-\mathbb{E}^{\{\sim k\}}f$.\end{lemma}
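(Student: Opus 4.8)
The plan is to expand $Q(u,k)=\sum_{r\in\Lambda(u)}\nu_{\Lambda(u)}\left\vert\nabla_r\left(\mathbb{E}^{M(u)}\vert h_k\vert^q\right)^{\frac1q}\right\vert^q$ over the five sites of $\Lambda(u)=\{u-2,u-1,u,u+1,u+2\}$ and split them according to whether the variable $x_r$ is coupled by the Hamiltonian $H^{M(u)}$ to a spin in $M(u)$. For the three ``interior'' sites $r\in\{u-1,u,u+1\}$ both neighbours of $r$ lie in $\Lambda(u)$, so $x_r$ does not appear in $H^{M(u)}$ and $\nabla_r$ commutes with $\mathbb{E}^{M(u)}$; for the two ``boundary'' sites $r=u\pm2$, whose outer neighbour $u\pm3$ lies in $M(u)$, differentiating under $\mathbb{E}^{M(u)}$ additionally hits the single interaction term $J_{u\pm3,u\pm2}V(x_{u\pm3},x_{u\pm2})$ of $H^{M(u)}$ that contains $x_{u\pm2}$, producing a covariance --- the one place where Lemma~\ref{lem5.1} is used. (Recall from Remark~\ref{remarkMs(k)} that $\mathbb{E}^{M(u)}=\mathbb{E}^{M_{u-3}(u)}\otimes\mathbb{E}^{M_{u+3}(u)}$ is a product of two one-sided measures.)

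For $r\in\{u-1,u,u+1\}$ I would write $\nabla_r(\mathbb{E}^{M(u)}\vert h_k\vert^q)^{\frac1q}=\frac1q(\mathbb{E}^{M(u)}\vert h_k\vert^q)^{\frac1q-1}\mathbb{E}^{M(u)}\nabla_r\vert h_k\vert^q$, use $\left\vert\nabla_r\vert h_k\vert^q\right\vert\le q\vert h_k\vert^{q-1}\vert\nabla_r h_k\vert$, and apply the H\"older inequality with conjugate exponents $p,q$ to $\mathbb{E}^{M(u)}$; since $\frac1q-1+\frac1p=0$ this collapses to $\left\vert\nabla_r(\mathbb{E}^{M(u)}\vert h_k\vert^q)^{\frac1q}\right\vert^q\le\mathbb{E}^{M(u)}\vert\nabla_r h_k\vert^q$, and taking $\nu_{\Lambda(u)}$ yields the terms $\nu_{\Lambda(u)}\vert\nabla_u h_k\vert^q+\sum_{r=u-1,u+1}\nu_{\Lambda(u)}\vert\nabla_r h_k\vert^q$ of the claim.

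For the boundary site $r=u+2$ (the case $r=u-2$ is symmetric) I would use $\nabla_{u+2}\mathbb{E}^{M(u)}\vert h_k\vert^q=\mathbb{E}^{M(u)}\nabla_{u+2}\vert h_k\vert^q-J_{u+3,u+2}\,\mathbb{E}^{M(u)}\bigl(\vert h_k\vert^q;\nabla_{u+2}V(x_{u+3},x_{u+2})\bigr)$, then the chain rule for $(\cdot)^{\frac1q}$ and $\vert a+b\vert^q\le2^{q-1}(\vert a\vert^q+\vert b\vert^q)$. The ``diagonal'' half is handled by H\"older exactly as above and contributes $c_1\nu_{\Lambda(u)}\vert\nabla_{u+2}h_k\vert^q$ (with $c_1=2^{4q}$ absorbing the numerical factors); the covariance half, after using $\vert J_{u+3,u+2}\vert\le J$, is exactly of the shape estimated in Lemma~\ref{lem5.1} with the roles $k\mapsto u$, $f\mapsto h_k$, $r=u+2$, $s=u+3$. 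Since $M_{u+3}(u)=\{u+3,u+4,\dots\}$ gives $\{u-4,u+4\}\cap M_{u+3}(u)=\{u+4\}$, Lemma~\ref{lem5.1} bounds it by $\frac{C}{\epsilon}\,\nu_{\Lambda(u+4)}\left\vert\nabla_{\Lambda(u+4)}(\mathbb{E}^{M(u+4)}\vert h_k\vert^q)^{\frac1q}\right\vert^q+\frac{K}{\epsilon}\nu\vert h_k\vert^q=\frac{C}{\epsilon}Q(u+4,k)+\frac{K}{\epsilon}\nu\vert h_k\vert^q$, and $dist(u,u+4)=4$; likewise $r=u-2$ gives $\frac{C}{\epsilon}Q(u-4,k)+\frac{K}{\epsilon}\nu\vert h_k\vert^q$ through $s=u-3$.

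Summing the five pieces and absorbing numerical constants into $c_1$ then gives the stated inequality: the diagonal parts of the two boundary sites yield $c_1\sum_{r=u-2,u+2}\nu_{\Lambda(u)}\vert\nabla_r h_k\vert^q$, the two covariances yield $\frac{J^qc_1C}{\epsilon}\sum_{dist(u,t)=4}Q(t,k)$, and the residual lower-order terms collect into the multiple of $\frac{J^q}{\epsilon}\nu\vert h_k\vert^q$. The step I expect to be the main obstacle is the treatment of the boundary sites $r=u\pm2$: one has to compute correctly the covariance coming from differentiating under the one-sided, infinite-volume product measure $\mathbb{E}^{M(u)}$, and then exploit the localization recorded in Remark~\ref{remarkMs(k)2} --- that $\nabla_{u\pm2}V(x_{u\pm3},x_{u\pm2})$ is localized in $\Lambda(u\pm4)$ --- so that Lemma~\ref{lem5.1} routes this covariance onto $Q(t,k)$ at the \emph{single} site $t$ with $dist(u,t)=4$; this nearest-neighbour-at-distance-$4$ structure is precisely what makes the ensuing recursion (Lemmas~\ref{lem5.2-1} and~\ref{lem5.2}) summable.
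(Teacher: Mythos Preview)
Your proposal is correct and follows essentially the same route as the paper: expand $Q(u,k)$ over the five sites of $\Lambda(u)$, handle the three interior sites $r\in\{u-1,u,u+1\}$ by commuting $\nabla_r$ through $\mathbb{E}^{M(u)}$ and applying H\"older (this is the paper's~\eqref{5.15}), and handle the two boundary sites $r=u\pm2$ by producing the covariance term (the paper's~\eqref{5.16}) and invoking Lemma~\ref{lem5.1} to route it onto $Q(u\pm4,k)$ (the paper's~\eqref{5.17}--\eqref{5.18}). Your identification of the single relevant $t$ with $dist(u,t)=4$ via $\{u-4,u+4\}\cap M_{u\pm3}(u)$ is exactly what the paper does.
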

 
\begin{proof} We have
\begin{align}Q(u,k)=&\nonumber\nu_{\Lambda(u)}\left\vert \nabla_{\Lambda(u)}\left( \mathbb{E}^{M(u)}
\vert h_k\vert^q \right)^{\frac{1}{q}}\right\vert^q=\nu_{\Lambda(u)}\left\vert \nabla_{u} (\mathbb{E}^{M(u)}
\vert h_k\vert^q)^{\frac{1}{q}}
\right\vert^q\\ &
\label{5.14}+\sum_{r= u-1,u+1}\nu_{\Lambda(u)}\left\vert \nabla_{r} (\mathbb{E}^{M(u)}
\vert h_k\vert^q)^{\frac{1}{q}}
\right\vert^q+\sum_{r= u-2,u+2}\nu_{\Lambda(u)}\left\vert \nabla_{r} (\mathbb{E}^{M(u)}
\vert h_k\vert^q)^{\frac{1}{q}}
\right\vert^q \end{align}
  For $r=u-1,u,u+1$
\begin{equation}\label{5.15}\nu_{\Lambda(u)}\left\vert \nabla_{r} (\mathbb{E}^{M(u)}
\vert h_k\vert^q)^{\frac{1}{q}}
\right\vert^q\leq\nu_{\Lambda(u)}\left\vert \nabla_r
h_k
\right\vert^q\end{equation}
   For $r=u-2,u+2$
 \begin{align}\nonumber\nu_{\Lambda(u)}&\left\vert \nabla_{r} (\mathbb{E}^{M(u)}
\vert h_k\vert^q)^{\frac{1}{q}}
\right\vert^q\leq c_1\nu_{\Lambda(u)}\left\vert \nabla_{r} h_{k}
\right\vert^q+\\ &
 \label{5.16}\frac{J^qc_1}{q^q}\nu_{\Lambda(u)}\left(\mathbb{E}^{M(u)}\vert h_k\vert^{q}\right)^{-\frac{q}{p}}\left(\mathbb{E}^{M(u)}(\vert h_k\vert^q;\nabla_rV(x_{r},x_{s}))\right)^q
 \mathcal{I}_{s\in \{u-3,u+3\}:s\sim r}
\end{align} 
 For $s\in \{u-3,u+3\}:s\sim r$, if we use Lemma~\ref{lem5.1}  we obtain
\begin{align}\nonumber\nu_{\Lambda(u)}&\left(\mathbb{E}^{M(u)}\vert h_k\vert^{q}\right)^{-\frac{q}{p}}\left\vert\mathbb{E}^{M(u)}(\vert h_k\vert^q;V'(x_{r},x_{s}))\right\vert^q
 \mathcal{I}_{s\in \{u-1,u+1\}:s\sim r}\leq\\ &\frac{C}{ \epsilon }\nu_{\Lambda(t)}\left\vert \nabla_{\Lambda(t)}
(\mathbb{E}^{M(t)}\vert h_k\vert^q )^{\frac{1}{q}}\right\vert^q\mathcal{I}_{(s,t)=(u+1,u+4)\cup(u-1,u-4):s
\sim r}+\frac{K}{\epsilon}\nu \vert h_k\vert^q
\label{5.17}\end{align} 
 From~\eqref{5.16} and~\eqref{5.17} we get
 \begin{align}\nonumber\nu_{\Lambda(u)}&\left\vert \nabla_{r} (\mathbb{E}^{M(u)}
\vert h_k\vert^q)^{\frac{1}{q}}
\right\vert^q\leq c_{1}\nu_{\Lambda(u)}\left\vert \nabla_{r} h_{k}
\right\vert^q\\&
\nonumber+\frac{J^qc_{1}C}{ \epsilon }Q(t,k)\mathcal{I}_{(s,t)=(u+1,u+4)\cup(u-1,u-4):s
\sim r}\mathcal{I}_{s\in \{u-1,u+1\}:s\sim r}\\ &\label{5.18}+\frac{J^qc_{1}K}{\epsilon}\nu\left\vert h_k\right\vert^q\mathcal{I}_{s\in \{u-1,u+1\}:s\sim r}\end{align}
To summarise, if we plug~\eqref{5.15} and~\eqref{5.18} in~\eqref{5.14} we finally obtain
\begin{align*}Q(u,k)\leq&\frac{J^qc_1C}{ \epsilon}\sum_{dist(u,t)=4}Q(t,k)+\frac{J^qc_12K}{\epsilon}\nu\left\vert f-\mathbb{E}^{\{\sim k\}}f\right\vert^q+\nu\left\vert \nabla_u
h_k
\right\vert^q\\ &
+\sum_{r= u-1,u+1}\nu_{\Lambda(u)}\left\vert \nabla_{r}
h_k
\right\vert^q+c_{1}\sum_{r= u-2,u+2}\nu_{\Lambda(u)}\left\vert \nabla_{r} h_k
\right\vert^q\end{align*}
 \end{proof}

~

 \begin{lemma} \label{lem5.4}Suppose conditions  (H1) is satisfied.
Then for  $r\in \Lambda(k)$, the following statements are true
 
~
 
\noindent  (a) When  $r=\{k-2,k,k+2\}$
  $$\nu\left\vert \nabla_{r} h_{k}
\right\vert^q\leq c_1\nu\left\vert \nabla_{r}f
\right\vert^q+\frac{J^qCc_{1}}{\epsilon }Q(k,k)+\frac{J^qc_{1}K}{\epsilon }\nu\left\vert f-\mathbb{E}^{\{\sim k\}}f
\right\vert^q$$
\noindent  (b) When  $r\in \{k-1,k+1\}$
$$\nu_{}\left\vert \nabla_{r} h_{k}
\right\vert^q= \nu\left\vert \nabla_{r}f
\right\vert^q$$where $h_k=f-\mathbb{E}^{\{\sim k\}}f$.\end{lemma}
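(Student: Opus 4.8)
\textbf{Proof proposal for Lemma~\ref{lem5.4}.}

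The plan is to treat the two cases separately, the key distinction being whether differentiating $h_k = f - \mathbb{E}^{\{\sim k\}}f$ in the direction $x_r$ hits the conditional expectation $\mathbb{E}^{\{\sim k\}}$ or not. Recall that $\mathbb{E}^{\{\sim k\}} = \mathbb{E}^{k-1}\mathbb{E}^{k+1}$ integrates out the variables $x_{k-1}$ and $x_{k+1}$, and that the Hamiltonian defining this measure depends, through the interaction $V$, only on the neighbouring variables $x_{k-2}, x_k, x_{k+2}$ (the boundary of $\{k-1,k+1\}$).

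For part (b), when $r \in \{k-1,k+1\}$, the function $\mathbb{E}^{\{\sim k\}}f$ does not depend on $x_r$ at all, since $x_r$ is one of the integration variables. Hence $\nabla_r h_k = \nabla_r f - \nabla_r(\mathbb{E}^{\{\sim k\}}f) = \nabla_r f$ pointwise, and the claimed identity $\nu|\nabla_r h_k|^q = \nu|\nabla_r f|^q$ follows immediately. This case is essentially a triviality.

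For part (a), when $r \in \{k-2, k, k+2\}$, I would first use the elementary inequality $|a+b|^q \le 2^{q-1}(|a|^q+|b|^q)$ (and in fact a cruder power $c_1 = 2^{4q}$ to match the constant already used elsewhere, e.g. in~\eqref{3.9}) to split
$$\nu|\nabla_r h_k|^q \le c_1 \nu|\nabla_r f|^q + c_1 \nu\bigl|\nabla_r(\mathbb{E}^{\{\sim k\}}f)\bigr|^q,$$
so the task reduces to bounding $\nu|\nabla_r(\mathbb{E}^{\{\sim k\}}f)|^q$. To estimate this, I would differentiate through the measure $\mathbb{E}^{\{\sim k\}}$ exactly as in the proof of Lemma~\ref{lem3.3}: writing $\rho$ for the density of $\mathbb{E}^{k-1}\mathbb{E}^{k+1}$, one has $\nabla_r(\mathbb{E}^{\{\sim k\}}f) = \mathbb{E}^{\{\sim k\}}(\nabla_r f) + \mathbb{E}^{\{\sim k\}}(f\,\nabla_r \log\rho)$, and since $\nabla_r\log\rho$ is (up to the coefficients $J_{ij}$, bounded by $J$ via (H3)) a sum of derivatives of $V$ of the form appearing in $W_k$ and centred, the second term equals $J$ times a covariance $\mathbb{E}^{\{\sim k\}}(f; \partial_r V(\cdots))$ of the kind controlled by Lemma~\ref{lem4.1}. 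Applying Jensen to the first term and Lemma~\ref{lem4.1} followed by Lemma~\ref{lem4.2} (to pass to $Q(k,k)$ and the gradient terms) to the second, then absorbing the resulting $\sum_{i=k\pm1}\nu|\nabla_i f|^q$ contribution into the $\nu|f - \mathbb{E}^{\{\sim k\}}f|^q$ term via the Spectral Gap from (H0), yields the stated bound with the constants $\frac{J^qCc_1}{\epsilon}$ in front of $Q(k,k)$ and $\frac{J^qc_1K}{\epsilon}$ in front of $\nu|f-\mathbb{E}^{\{\sim k\}}f|^q$.

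The main obstacle is the bookkeeping: making sure the splitting of $\nabla_r\log\rho$ produces exactly the covariance expressions handled by Lemmata~\ref{lem4.1}--\ref{lem4.2}, that the centred derivative $\partial_rV - \mathbb{E}^{\{\sim k\}}\partial_rV$ falls under the hypothesis (H2) exponential bounds (so that the entropy inequality~\eqref{3.3} applies with the constant $K$), and that the final constants genuinely collapse to the form stated. No case here should require a genuinely new idea beyond combining Lemma~\ref{lem3.1}-type covariance control with the differentiation-through-the-kernel computation of Lemma~\ref{lem3.3} and Lemma~\ref{lem4.3}.
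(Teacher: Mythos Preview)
Your overall plan for part (a) --- split $\nabla_r h_k$, differentiate through the density of $\mathbb{E}^{\{\sim k\}}$, and control the resulting covariance --- is exactly the paper's strategy, and your treatment of part (b) is correct. However, the covariance you obtain after differentiating $\mathbb{E}^{\{\sim k\}}f$ is $\mathbb{E}^{\{\sim k\}}(f;\nabla_r V)$, which is \emph{linear} in $f$. Lemmata~\ref{lem4.1} and~\ref{lem4.2} are tailored to the covariance $\mathbb{E}^{\{\sim k\}}(|f|^q;W_k)$ (with the normalising factor $(\mathbb{E}^{\{\sim k\}}|f|^q)^{-q/p}$), which arises when one differentiates $(\mathbb{E}^{\{\sim k\}}|f|^q)^{1/q}$ as in Lemma~\ref{lem4.3}; they do not apply to the linear covariance here. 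The correct tool is Lemma~\ref{lem3.1}(a), which bounds $\nu|\mathbb{E}^{k-1}\mathbb{E}^{k+1}(f;u)|^q$ directly by $\frac{C}{\epsilon}Q(k,k)+\frac{1}{\epsilon}\bigl(\log\nu_{\Lambda(k)}e^{\epsilon|u-\mathbb{E}^{\{\sim k\}}u|^q}\bigr)\nu|f-\mathbb{E}^{\{\sim k\}}f|^q$, already in the form stated in the lemma once (H2) gives the $K$.

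Relatedly, your final ``absorption'' step is in the wrong direction: the Spectral Gap inequality bounds $\nu|f-\mathbb{E}^{\{\sim k\}}f|^q$ \emph{above} by $\hat c\sum_{i=k\pm1}\nu|\nabla_i f|^q$, not the reverse, so you cannot trade gradient terms back for the variance term. This step is also illegitimate for a second reason: Lemma~\ref{lem5.4} is stated under (H1) alone (and implicitly (H2) for the constant $K$), with (H0) deliberately avoided --- indeed, Lemma~\ref{50} is designed to use only (H1)--(H3), and the Spectral Gap is applied only afterwards to deduce Lemma~\ref{lem3.2}. Using Lemma~\ref{lem3.1}(a) in place of Lemmata~\ref{lem4.1}--\ref{lem4.2} removes both problems at once, since its output already contains $\nu|f-\mathbb{E}^{\{\sim k\}}f|^q$ and requires no appeal to (H0).
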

 \begin{proof} We will show (a). For general $r \in \Lambda(k)\smallsetminus\{k-1,k+1\}$ we have
\begin{equation}\label{5.19}\nu\left\vert \nabla_r h_k
\right\vert^q\leq2^{q-1}\nu\left\vert \nabla_r
f
\right\vert^q+2^{q-1}\nu\left\vert \nabla_r
\mathbb{E}^{\{\sim k\}}f
\right\vert^q \end{equation}
We will now compute $\nu\left\vert \nabla_r
\mathbb{E}^{\{\sim k\}}f
\right\vert^q$ for the separate cases of $r\in \{k-2,k+2\}$  and $r=k$.
 
~
 
\noindent
 Consider  $r=\{k-2,k+2\}$. In this case
 \begin{align}\nonumber\nu\left\vert \nabla_r
\mathbb{E}^{\{\sim k\}}f
\right\vert^q\leq & 2^{q-1}\nu\left\vert \nabla_{r} f
\right\vert^q\\ &
\label{5.20}+J^q2^{q-1}\nu\left\vert\mathbb{E}^{\{\sim k\}}(f;\nabla_rV(x_s,x_r))\right\vert^q\mathcal{I}_{s\in \{k-1,k+1\}:s\sim r}\end{align}
 If we use Lemma~\ref{lem3.1} (a) to bound the second term on the right hand side of~\eqref{5.20} we obtain
 \begin{align} \nonumber\nu\left\vert \nabla_{r}
\mathbb{E}^{\{\sim k\}}f
\right\vert^q\leq&2^{q-1}\nu\left\vert \nabla_{r} f
\right\vert^q+\frac{J^q2^{q-1}C}{\epsilon }Q(k,k) \\& 
 \label{5.21}+\frac{J^q2^{q-1}K}{\epsilon }\nu\left\vert f-\mathbb{E}^{\{\sim k\}}f
\right\vert^q \end{align}
Combining~\eqref{5.19} and~\eqref{5.21} together we derive
$$\nu\left\vert \nabla_r h_k
\right\vert^q\leq c_1\nu\left\vert \nabla_rf
\right\vert^q+
 \frac{J^qCc_1}{\epsilon }Q(k,k)+\frac{J^qc_1K}{\epsilon}\nu\left\vert f-\mathbb{E}^{\{\sim k\}}f
\right\vert^q$$for $K$ as in (H2).

  Consider  $r=k$. In this case
 \begin{align}\nonumber\nu\left\vert \nabla_k
\mathbb{E}^{\{\sim k\}}f
\right\vert^q\leq & 2^{q-1}\nu_{}\left\vert \nabla_{k} f
\right\vert^q+J^q2^{q-1}\nu\left(\mathbb{E}^{\{\sim k\}}(f;W_{k})\right)^q \\   \leq&
\label{5.22} 2^{q-1}\nu\left\vert \nabla_{k}f
\right\vert^q+\frac{J^qC2^{q-1}}{\epsilon}Q(k,k)+\frac{J^q2^{q-1}K}{\epsilon}\nu\left\vert f-\mathbb{E}^{\{\sim k\}}f
\right\vert^q\end{align}
where in the last inequality the Lemma~\ref{lem3.1} (a) was used for $K$ as in (H2).  From~\eqref{5.19} and~\eqref{5.22}
$$\nu\left\vert \nabla_{k} h_{k}
\right\vert^q\leq c_1\nu\left\vert \nabla_{k}f
\right\vert^q+\frac{J^qCc_{1}}{\epsilon }Q(k,k)+\frac{J^qc_{1}K}{\epsilon }\nu\left\vert f-\mathbb{E}^{\{\sim k\}}f
\right\vert^q$$  \end{proof}
We can now prove Lemma~\ref{50}.

 ~

 \noindent
\textit{\textbf{Proof of Lemma~\ref{50}.}} If we combine the bound for $Q(k,k)$ from Lemma~\ref{lem5.3}, together with the bounds
for $\nu\left\vert \nabla_{r} h_{k}\right\vert^q,r=k-2,k-1,k,k+1,k+2$ from Lemma~\ref{lem5.4}, we obtain
\begin{align} Q(k,k)\leq& \sum_{r=k-1,k+1}\nu\left\vert \nabla_r f
\right\vert^q+c_1\nu\left\vert \nabla_kf
\right\vert^q+\frac{J^qCc_1}{\epsilon }Q(k,k)+\frac{J^qc_1K}{\epsilon}\nu\left\vert f-\mathbb{E}^{\{\sim k\}}f
\right\vert^q \nonumber  \\ & \nonumber
 +c_1\sum_{r=k-2,k+2}\left( c_1\nu\left\vert \nabla_rf
\right\vert^q+\frac{J^qCc_1}{\epsilon }Q(k,k)+\frac{J^qc_1K}{\epsilon }\nu\left\vert f-\mathbb{E}^{\{\sim k\}}f\right\vert^q \right)\\  & \nonumber
 +\frac{J^qc_1C}{\epsilon}\sum_{dist(k,t)=4}Q(t,k)+\frac{J^qc_12K}{\epsilon}\nu\left\vert f-\mathbb{E}^{\{\sim k\}}f\right\vert^q \\= & \nonumber \sum_{r=k-1,k+1}\nu\left\vert \nabla_r f
\right\vert^q+c_1\nu\left\vert \nabla_kf
\right\vert^q+\frac{J^q(c_13+c^2_12)K}{\epsilon}\nu\left\vert f-\mathbb{E}^{\{\sim k\}}f\right\vert^q\\ &  +  \label{5.23}c^2_1\sum_{r=k-2,k+2} \nu\left\vert \nabla_rf
\right\vert^q+\frac{J^q2C(c_1+c^2_1)}{\epsilon}Q(k,k)+\frac{J^qc_1C}{\epsilon}\sum_{dist(k,t)=4}Q(t,k)\end{align}
In order to bound $\sum_{dist(k,t)=4}Q(t,k)$ in the above quantity the lemma
bellow will be used.
 
\begin{lemma} \label{lem5.5}Under conditions (H1)-(H3) the following inequality
 \begin{align*}\sum_{t:dist(t,k)=4}Q(t,k)\leq & J^qTQ(k,k)+J^qT\nu\left\vert f-\mathbb{E}^{\{\sim k\}}f\right\vert^q+T\sum_{r=k-2,k+2}\nu\left\vert \nabla_r
f
\right\vert^q\\ &
 +T\sum_{n=0}^{\infty} J^{n(q-1)}\sum_{r=0}^3\left(\nu\left\vert \nabla_{k+3+4n+r} f
\right\vert^q+\nu\left\vert \nabla_{k-3-4n-r} f
\right\vert^q\right)\end{align*}
 is satisfied for some positive constant $T$ independent of $k$.\end{lemma}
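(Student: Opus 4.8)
The plan is to recognise Lemma~\ref{lem5.5} as an instance of the abstract recursion of Lemma~\ref{lem5.2}, the recursion itself being produced by iterating Lemma~\ref{lem5.3} outward from $k$ along the line. For $n\geq1$ put $P(4n)=Q(k-4n,k)+Q(k+4n,k)$, and $P(m)=0$ for $m$ not a positive multiple of $4$, so that the left-hand side of Lemma~\ref{lem5.5} is exactly $P(4)$. Before invoking Lemma~\ref{lem5.2} one must verify its standing hypothesis $\sup_{m}P(m)<\infty$. When $u$ is far from $k$, $h_k$ is supported inside $M(u)$, so $\mathbb{E}^{M(u)}\vert h_k\vert^q$ depends only on a single boundary variable of $\Lambda(u)$, and $Q(u,k)$ collapses to one covariance of the type handled in Lemma~\ref{lem5.1}; bounding it crudely through the H\"older step in the proof of Lemma~\ref{lem5.1}, the exponential moments of (H2) and the integrability assumed in Remark~\ref{newremark1} gives a bound on $Q(u,k)$ uniform in $u$, while the finitely many remaining values of $u$ are finite by the same estimates. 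This is the one place where the full strength of (H2) and the integrability hypothesis is used.

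Next I would apply Lemma~\ref{lem5.3} with $u=k+4n$ and with $u=k-4n$ and add the two inequalities. In one dimension $dist(k\pm4n,\cdot)=4$ picks out precisely the sites $k\pm4(n-1)$ and $k\pm4(n+1)$, so the $Q(t,k)$-terms combine into $P(4(n-1))+P(4(n+1))$, giving, for $n\geq2$,
$$P(4n)\leq G(4n)+\tfrac{J^qc_1C}{\epsilon}\bigl(P(4(n-1))+P(4(n+1))\bigr),$$
that is (\ref{l6-5.7}) with $K'=c_1C/\epsilon$, where $G(4n)$ collects a fixed multiple of the gradient window $\{k\pm(4n-2),\dots,k\pm(4n+2)\}$ — for which $\nu\vert\nabla_r h_k\vert^q\leq c_1\nu\vert\nabla_r f\vert^q$ by Jensen, since $\mathbb{E}^{\{\sim k\}}$ does not depend on $x_r$ once $\vert r-k\vert\geq3$ — together with a fixed $J^q$-multiple of $\nu\vert h_k\vert^q$. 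For $n=1$ the sites $k\pm2$ lie in $\Lambda(k)$, and there one must instead use Lemma~\ref{lem5.4}(a); the $Q(k,k)$ occurring here (from $dist(k\pm4,\cdot)=4$ and from Lemma~\ref{lem5.4}(a)) and an additional $\nu\vert h_k\vert^q$ term, both with $J^q$-coefficients, are absorbed into $G(4)$, yielding (\ref{l6-5.6}). Condition (\ref{l6-5.8}) holds once $J$ is small, which (H3) permits.

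Lemma~\ref{lem5.2} then gives $P(4)\leq\tfrac{1}{1-J^{2q-2}}\sum_{n\geq0}J^{n(q-1)}G(4n+4)$, and it remains to re-expand $G$. The lone $Q(k,k)$ (in $G(4)$) contributes $J^qTQ(k,k)$; the $\nu\vert h_k\vert^q$ pieces, present in every $G(4n+4)$ with a uniform $J^q$-coefficient, sum geometrically in $J^{q-1}$ to $J^qT\nu\vert h_k\vert^q$. The gradient part of $G(4)$ runs over $\{k\pm2,\dots,k\pm6\}=\{k\pm2\}\cup\{k\pm3,\dots,k\pm6\}$, supplying $\sum_{r=k-2,k+2}\nu\vert\nabla_r f\vert^q$ and the $n=0$ block of the stated sum; for $n\geq1$ the window $\{k\pm(4n+2),\dots,k\pm(4n+6)\}$ splits as $\{k\pm(4n+2)\}\cup\{k\pm(4n+3),\dots,k\pm(4n+6)\}$, and since $J<1$ the weight $J^{n(q-1)}$ on the leftover site $k\pm(4n+2)=k\pm(4(n-1)+6)$ is dominated by $J^{(n-1)(q-1)}$, so after reindexing the whole gradient contribution is absorbed, up to a constant, into $\sum_{n\geq0}J^{n(q-1)}\sum_{r=0}^{3}\bigl(\nu\vert\nabla_{k+3+4n+r}f\vert^q+\nu\vert\nabla_{k-3-4n-r}f\vert^q\bigr)$. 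Collecting everything under one constant $T$, which swallows $\tfrac{1}{1-J^{2q-2}}$, $\tfrac{1}{1-J^{q-1}}$ and the various $c_1,C,c,K,\epsilon$, produces the asserted inequality. I expect the genuinely delicate steps to be this last bookkeeping — aligning the $J$-weighted tiling of gradient windows coming from Lemma~\ref{lem5.2} with the precise block structure in the statement — and, to a lesser extent, the a priori finiteness $\sup_m P(m)<\infty$.
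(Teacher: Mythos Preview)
Your proposal is correct and follows essentially the same route as the paper. The paper packages the combination of Lemma~\ref{lem5.3} with the boundary estimate of Lemma~\ref{lem5.4}(a) into an intermediate result (Lemma~\ref{lem5.6}), then defines $\tilde Q(n)=\sum_{dist(u,k)=n}Q(u,k)$ --- which in one dimension is exactly your $P(n)$ --- derives (\ref{l6-5.6})--(\ref{l6-5.7}), and invokes Lemma~\ref{lem5.2}; the only substantive difference is the a priori bound $\sup_n P(n)<\infty$, where the paper gives a direct argument via the entropic inequality and (H2) (Lemma~\ref{newlemmasection5}) valid for every $u$, whereas your localisation sketch (for $u$ far from both $k$ and the finite support $\Sigma_f$ of $f$, not just from $k$) is a legitimate alternative.
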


The proof of Lemma~\ref{lem5.5} will be presented later in the section. If we use the bound of Lemma~\ref{lem5.5} in~\eqref{5.23}, we
obtain
 \begin{align}Q(k,k)\leq&\nonumber J^q\left(\frac{TJ^qc_{1}C}{ \epsilon}+\frac{c_13cK}{\epsilon}+\frac{c^2_12cK}{\epsilon}\right)\nu\left\vert f-\mathbb{E}^{\{\sim k\}}f
\right\vert^q\\ &
 \nonumber +J^q\left(\frac{2Cc^2_1}{\epsilon}+\frac{Cc_1}{\epsilon}\right )Q(k,k)+\frac{J^qc_1C}{ \epsilon}J^qTQ(k,k)\\ &\nonumber
+\sum_{r=k-1,k+1}\nu\left\vert \nabla_r f
\right\vert^q+c_1\nu\left\vert \nabla_kf
\right\vert^q+( \frac{J^qc_1C}{ \epsilon }T+c_1^2)\sum_{r=k-2,k+2}\nu\left\vert \nabla_r f
\right\vert^q \\ &
 \label{5.24}+\frac{J^qc_1C}{ \epsilon}T\sum_{ n=0 }^{\infty} J^{n(q-1)}\sum_{r=0}^3\left(\nu\left\vert \nabla_{k+3+4n+r} f
\right\vert^q+\nu\left\vert \nabla_{k-3-4n-r} f
\right\vert^q\right)\end{align}
 If we choose $J$ sufficiently small such that
 $$1-J^q\left(\frac{2Cc^2_1}{\epsilon}+\frac{J^qc_1CT}{ \epsilon}+\frac{Cc_1}{\epsilon }\right )>\frac{1}{2}$$
then from~\eqref{5.24} we have
\begin{align*}Q(k,k)\leq&2J^q\left(\frac{TJ^qc_{1}C}{ \epsilon}+\frac{c_13cK}{\epsilon}+\frac{c^2_12cK}{\epsilon}\right)\nu\left\vert f-\mathbb{E}^{\{\sim k\}}f
\right\vert^q+2c_1\nu\left\vert \nabla_kf
\right\vert^q\\ &
\nonumber
+2\sum_{r=k-1,k+1}\nu\left\vert \nabla_r f
\right\vert^q+2( \frac{J^qc_1C}{ \epsilon }T+c_1^2)\sum_{r=k-2,k+2}\nu\left\vert \nabla_r f
\right\vert^q \\ &+
\frac{2J^qc_1C}{ \epsilon}T\sum_{ n=0 }^{\infty} J^{n(q-1)}\sum_{r=0}^3\left(\nu\left\vert \nabla_{k+3+4n+r} f
\right\vert^q+\nu\left\vert \nabla_{k-3-4n-r} f
\right\vert^q\right)\end{align*}
 and the lemma follows for an appropriate positive constant $D$. \qed

 ~

  It remains to show Lemma~\ref{lem5.5}. For this we will need the following
 lemmata. 
 \begin{lemma} \label{lem5.6}Under conditions (H1)-(H3) the following two
bounds for $Q(u,k)$ hold.
 
~

\noindent (a) For $u$ such that $dist(u,k)\geq 8$
   \begin{align*}Q(u,k)\leq & c_1\nu_{\Lambda(u)}\left\vert \nabla_u
f
\right\vert^q+c_1\sum_{r= u-1,u+1}\nu_{\Lambda(u)}\left\vert \nabla_r
f
\right\vert^q+c^2_1\sum_{r=u-2,u+2}\nu_{\Lambda(u)}\left\vert \nabla_r f
\right\vert^q \\ &
+ 
\frac{J^qc_1C}{\epsilon}\sum_{dist(u,t)=4}Q(t,k)+\frac{J^qc_{1}2K}{\epsilon}\nu\left\vert f-\mathbb{E}^{\{\sim k\}}f
\right\vert^q\end{align*}
(b) For $u$ such that $dist(u,k)=4$
\begin{align}Q(u,k)\nonumber\leq &c_1\nu\left\vert \nabla_u
f\right\vert^q+c^2_1\sum_{r=u-2,u+2}\nu\left\vert \nabla_r f
\right\vert^q
\nonumber +J^q\left(\frac{c_12K}{\epsilon}+\frac{c^2_1K}{\epsilon }\right)\nu\left\vert f-\mathbb{E}^{\{\sim k\}}f
\right\vert^q \\ &
 \nonumber +c_1\sum_{r=u-1,u+1}\nu\left\vert \nabla_r
f
\right\vert^q+\frac{J^qCc^2_1}{\epsilon}Q(k,k)+\frac{J^qc_1C}{\epsilon}\sum_{dist(u,t)=4,t\neq k}Q(t,k)\end{align}\end{lemma}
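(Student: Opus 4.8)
The plan is to read off both estimates from Lemma~\ref{lem5.3}, which already expresses $Q(u,k)$ as a sum of the local gradient terms $\nu_{\Lambda(u)}\left\vert\nabla_r h_k\right\vert^q$ over $r\in\Lambda(u)$, the term $\frac{J^q c_1 2cK}{\epsilon}\nu\left\vert h_k\right\vert^q$, and the recursion term $\frac{J^q c_1 C}{\epsilon}\sum_{dist(u,t)=4}Q(t,k)$. All that is left is to replace the $\nu_{\Lambda(u)}\left\vert\nabla_r h_k\right\vert^q$ by honest derivatives of $f$, and, in case (b), to peel off a $Q(k,k)$ contribution; both are governed purely by how far the window $\Lambda(u)=\{u-2,\dots,u+2\}$ sits from $k$. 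The basic tool is an elementary ``far-site'' bound: if $r\notin\Lambda(k)=\{k-2,k-1,k,k+1,k+2\}$ then $\mathbb{E}^{\{\sim k\}}$ neither integrates $x_r$ nor carries $x_r$ in its density, so $\nabla_r h_k=\nabla_r f-\mathbb{E}^{\{\sim k\}}\nabla_r f$; by $|a-b|^q\le 2^{q-1}(|a|^q+|b|^q)$, Jensen's inequality for $\mathbb{E}^{\{\sim k\}}$, and the DLR identity $\nu\mathbb{E}^{\{\sim k\}}=\nu$ (legitimate since $\{\sim k\}=\{k-1,k+1\}$ is finite), one gets $\nu_{\Lambda(u)}\left\vert\nabla_r h_k\right\vert^q\le 2^q\nu_{\Lambda(u)}\left\vert\nabla_r f\right\vert^q\le c_1\nu_{\Lambda(u)}\left\vert\nabla_r f\right\vert^q$, using $c_1=2^{4q}\ge 2^q$.

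For part (a), $dist(u,k)\ge 8$ forces $dist(r,k)\ge 6$ for every $r\in\Lambda(u)$, so $r\notin\Lambda(k)$; applying the far-site bound to all five sites of $\Lambda(u)$ and substituting into Lemma~\ref{lem5.3} gives (a) immediately --- the sites $u-1,u,u+1$ enter Lemma~\ref{lem5.3} with coefficient $1$ and acquire a factor $c_1$, the sites $u\pm2$ enter with coefficient $c_1$ and acquire $c_1^2$, while the $\nu\left\vert h_k\right\vert^q$ term and the recursion term are left untouched (here $\{t:dist(u,t)=4\}=\{u-4,u+4\}$, both at distance $\ge 4$ from $k$, so nothing has to be split off).

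For part (b), $dist(u,k)=4$ means $u=k\pm 4$, hence $\Lambda(u)$ meets $\Lambda(k)$ in the single site $s_0:=u\mp 2\in\{k-2,k+2\}$, which is one of the cases of Lemma~\ref{lem5.4}(a). For this site I would bound $\nu_{\Lambda(u)}\left\vert\nabla_{s_0}h_k\right\vert^q$ by Lemma~\ref{lem5.4}(a), which brings in $\frac{J^q C c_1}{\epsilon}Q(k,k)$ and $\frac{J^q c_1 K}{\epsilon}\nu\left\vert h_k\right\vert^q$; since $s_0\in\{u-2,u+2\}$ it carries coefficient $c_1$ in Lemma~\ref{lem5.3}, producing a $Q(k,k)$-term of order $\frac{J^q C c_1^2}{\epsilon}$ together with part of the $\nu\left\vert h_k\right\vert^q$ coefficient. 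The other four sites of $\Lambda(u)$ are at distance $\ge 3$ from $k$, so $\notin\Lambda(k)$, and the far-site bound applies exactly as in (a). Finally, for $u=k\pm 4$ one has $\{t:dist(u,t)=4\}=\{k,\ k\pm 8\}$, so $\sum_{dist(u,t)=4}Q(t,k)=Q(k,k)+\sum_{dist(u,t)=4,\,t\ne k}Q(t,k)$: the first piece is merged into the $Q(k,k)$-coefficient (bounding $c_1\le c_1^2$ where needed), the second is retained, and collecting universal constants gives (b).

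The only real obstacle is the bookkeeping of this distance analysis: one has to notice that in case (b) exactly one site of $\Lambda(u)$ is ``close'' to $k$ --- forcing the delicate, $Q(k,k)$-producing estimate of Lemma~\ref{lem5.4}(a) --- whereas the remaining four sites, and all of $\Lambda(u)$ in case (a), are ``far'' and yield only genuine gradient terms; and then one must absorb the various universal constants ($2^q$ into $c_1$, $c_1$ into $c_1^2$, and the split-off $t=k$ term) so that the final bounds take precisely the stated form. The analytic content is already present in Lemmata~\ref{lem5.3} and \ref{lem5.4}; what remains is elementary.
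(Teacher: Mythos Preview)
Your proposal is correct and follows essentially the same route as the paper: start from Lemma~\ref{lem5.3}, replace each $\nu\left\vert\nabla_r h_k\right\vert^q$ by the far-site bound $\nu\left\vert\nabla_r h_k\right\vert^q\le 2^q\nu\left\vert\nabla_r f\right\vert^q$ when $r\notin\Lambda(k)$, and in case~(b) invoke Lemma~\ref{lem5.4}(a) for the single overlap site $s_0\in\{k-2,k+2\}$. If anything you are more careful than the paper, which applies Lemma~\ref{lem5.4}(a) indiscriminately ``for $r\in\{u-2,u+2\}$'' and ends its displayed bound with $\sum_{dist(u,t)=4}Q(t,k)$ rather than the $t\neq k$ sum appearing in the statement; your explicit splitting off of the $t=k$ term and absorption via $c_1\le c_1^2$ is exactly what is needed to reconcile the two, up to harmless constants.
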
 \begin{proof} The lemma follows from the bound of $Q(u,k)$ in Lemma~\ref{lem5.3}.
In the case where $dist(u,k)\geq 8$,  for $r=u-2,u-1,u,u+1,u+2$ we have
that
\begin{equation}\label{5.25}\nu\left\vert \nabla_{r}
h_k
\right\vert^q\leq2^{q-1}2\nu\left\vert \nabla_{r}
f
\right\vert^q\end{equation}
Substituting~\eqref{5.25} in the expression from Lemma~\ref{lem5.3} we immediately obtain (a).
Consider the case where  $dist(u,k)=4$. Then for $r=u-1,u,u+1$
\begin{equation}\label{5.26}\nu\left\vert \nabla_{r}
h_k
\right\vert^q\leq2^{q-1}2\nu\left\vert \nabla_{r}
f
\right\vert^q\end{equation}
While for $r=\{u-2,u+2\}$ we can bound  $\nu\left\vert \nabla_{r} h_{k}
\right\vert^q$ from  Lemma~\ref{lem5.4} (a). If we plug the bounds from~\eqref{5.26} and~Lemma~\ref{lem5.4} (a) into  the expression from Lemma~\ref{lem5.3},  we obtain
\begin{align*}Q(u,k)\leq &
J^q\left(\frac{c_12K}{\epsilon}+\frac{c^2_1K}{\epsilon }\right)\nu\left\vert f-\mathbb{E}^{\{\sim k\}}f
\right\vert^q+
 \frac{J^qCc^2_1}{\epsilon}Q(k,k)+c_1\nu\left\vert \nabla_u
f
\right\vert^q\\ &
+c_1\sum_{r= u-1,u+1}\nu\left\vert \nabla_r
f
\right\vert^q+c^2_1\sum_{r=u-2,u+2}\nu\left\vert \nabla_r f
\right\vert^q+ \frac{J^qc_1C}{ \epsilon}\sum_{dist(u,t)=4}Q(t,k)\end{align*} \end{proof} 
Before proving Lemma \ref{lem5.5}, we will also need to show that for any $k\in \mathbb{N}$$$\sup_{n\in\mathbb{N}}\sum_{dist(u,k)=n} Q(u,k)<C_{f}<\infty$$for $C_f$  a constant which depends on the function $f$ but not on $n,u$ and $k$. To show this we first need the following lemma. 
\begin{lemma}\label{newlemma0section5}For any $r,k \in \mathbb{Z}$ we have $$\nu\left\vert \nabla_r h_k
\right\vert^q\leq\tilde C_f<\infty$$
where $\tilde C_f$ depends on the function $f$ but not on $r$ and $k$.
\end{lemma}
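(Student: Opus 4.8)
The plan is to split according to the position of $r$ relative to $k$, observe that the only delicate case is $r\in\{k-2,k,k+2\}$, reduce that case to a bound on $Q(k,k)$, and finally establish $\sup_k Q(k,k)<\infty$ using the finite support of $f$. Write $h_k=f-\mathbb{E}^{\{\sim k\}}f$ and recall that $\mathbb{E}^{\{\sim k\}}=\mathbb{E}^{k-1}\mathbb{E}^{k+1}$ is a product measure whose density depends on the integrated variables $x_{k-1},x_{k+1}$ and, as parameters, only on $x_{k-2},x_k,x_{k+2}$. If $r\in\{k-1,k+1\}$ then $\mathbb{E}^{\{\sim k\}}f$ is independent of $x_r$, so $\nabla_r h_k=\nabla_r f$ and $\nu|\nabla_r h_k|^q=\nu|\nabla_r f|^q\le\nu|\nabla f|^q$ by Remark~\ref{newremark1}; this is Lemma~\ref{lem5.4}(b). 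If $r\notin\Lambda(k)$ then the density of $\mathbb{E}^{\{\sim k\}}$ does not depend on $x_r$, so $\nabla_r$ commutes with $\mathbb{E}^{\{\sim k\}}$, whence $\nabla_r h_k=\nabla_r f-\mathbb{E}^{\{\sim k\}}\nabla_r f$, and Jensen's inequality together with the DLR identity $\nu\mathbb{E}^{\{\sim k\}}=\nu$ give $\nu|\nabla_r h_k|^q\le 2^q\nu|\nabla_r f|^q\le 2^q\nu|\nabla f|^q$.

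For $r\in\{k-2,k,k+2\}$ I would invoke Lemma~\ref{lem5.4}(a): $\nu|\nabla_r h_k|^q\le c_1\nu|\nabla_r f|^q+\tfrac{J^qCc_1}{\epsilon}Q(k,k)+\tfrac{J^qc_1K}{\epsilon}\nu|f-\mathbb{E}^{\{\sim k\}}f|^q$, where $\nu|f-\mathbb{E}^{\{\sim k\}}f|^q\le 2^q\nu|f|^q<\infty$ by Jensen and $\nu\mathbb{E}^{\{\sim k\}}=\nu$ (or, if (H0) is available, the sharper bound $\le 2\hat c\,\nu|\nabla f|^q$ coming from~\eqref{3.6}). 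Combining the three regimes, $\nu|\nabla_r h_k|^q\le c_1\nu|\nabla f|^q+\tfrac{2^qJ^qc_1K}{\epsilon}\nu|f|^q+\tfrac{J^qCc_1}{\epsilon}Q(k,k)$ with constants not depending on $r$ or $k$, so the statement reduces to $\sup_k Q(k,k)\le\tilde C_f'<\infty$. Observe moreover that $\nabla_r h_k\equiv 0$ unless $k$ lies within distance one of $\Sigma_f$ and $r\in\Sigma_f\cup\{k-2,k,k+2\}$, so that only finitely many pairs $(r,k)$ are actually relevant and the supports involved have uniformly bounded diameter; this makes the claimed uniformity automatic once finiteness is in hand.

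The finiteness of $\sup_k Q(k,k)$ is the step I expect to be the main obstacle, since the natural estimate for $Q(k,k)$ is self-referential. To close it I would exploit that $h_k$ depends only on variables in the finite set $\Sigma=\Sigma_f\cup\{k-2,k,k+2\}$: whenever a block $\Lambda(u)$ is disjoint from $\Sigma$ one has $\nabla_{u\pm2}h_k=0$ and $(\mathbb{E}^{M(u)}|h_k|^q)^{1/q}$ depends only on the boundary node(s) $u\pm2$, so Lemma~\ref{lem5.1} contracts $Q(u,k)$ into $\tfrac{J^qC}{q^q\epsilon}Q(u',k)+\tfrac{J^qK}{q^q\epsilon}\nu|h_k|^q$ with $u'$ the adjacent block shifted by $4$ towards $\Sigma$. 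Iterating this contracts down to the finitely many blocks meeting $\Sigma$ and accumulates only a convergent geometric tail in $\nu|h_k|^q\le 2^q\nu|f|^q$. For those finitely many blocks $v$, Lemma~\ref{lem5.3} combined with Lemma~\ref{lem5.4} bounds $Q(v,k)$ by a finite combination of $\nu|\nabla f|^q$, $\nu|h_k|^q$, a small multiple of $Q(k,k)$, and small multiples of $Q(\cdot,k)$ at distance $4$; collecting these gives a finite linear system $Q(v,k)\le b_v+\sum_w M_{vw}Q(w,k)$ over the blocks $v$ meeting $\Sigma$ together with $v=k$, with $\sum_w|M_{vw}|=O(J^q)<1$ for $J$ as in (H3) and each $b_v$ controlled by $\nu|\nabla f|^q$ and $\nu|f|^q$. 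Strict diagonal dominance then yields $\sup_k Q(k,k)\le\tilde C_f'$, and the lemma follows with $\tilde C_f=c_1\nu|\nabla f|^q+\tfrac{2^qJ^qc_1K}{\epsilon}\nu|f|^q+\tfrac{J^qCc_1}{\epsilon}\tilde C_f'$.
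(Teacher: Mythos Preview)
Your case analysis for $r\notin\{k-2,k,k+2\}$ is fine and matches the paper. The gap is in the remaining case, where you route through Lemma~\ref{lem5.4}(a) and then try to bound $\sup_k Q(k,k)$ independently. This is circular in two ways. First, in the paper's logical order the present lemma is precisely the input needed to prove finiteness of the $Q$'s (Lemma~\ref{newlemmasection5}), so quoting a $Q$-bound here inverts the dependency. Second, your own standalone argument for $Q(k,k)<\infty$ via a finite linear system $Q\le b+MQ$ with $\|M\|<1$ does \emph{not} establish finiteness: if every $Q(v,k)=+\infty$ the inequalities hold trivially, so diagonal dominance only yields a quantitative bound once finiteness is already known. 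And the natural a~priori finiteness of $Q(k,k)$ requires, through \eqref{5.15} with $r=k$, exactly the bound $\nu|\nabla_k h_k|^q<\infty$ that you are trying to prove.

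The paper avoids all of this by never introducing $Q(k,k)$. For $r\in\{k-2,k,k+2\}$ it writes $\nu|\nabla_r h_k|^q\le 2^{q-1}\nu|\nabla_r f|^q+2^{q-1}\nu|\nabla_r\mathbb{E}^{\{\sim k\}}f|^q$ and bounds the correlation $\nu|\mathbb{E}^{\{\sim k\}}(f;Z_k)|^q$ (with $Z_k$ the appropriate combination of $\nabla_r V$ terms) \emph{directly} via the entropic inequality \eqref{3.3} applied with respect to the full measure $\nu$, not $\nu_{\Lambda(k)}$. This gives
\[
\nu|\mathbb{E}^{\{\sim k\}}(f;Z_k)|^q\ \le\ \tfrac{1}{\epsilon}\,\nu\Big(f^q\log\tfrac{f^q}{\nu f^q}\Big)+\tfrac{K}{\epsilon}\,\nu f^q,
\]
using (H2) to control $\log\nu e^{\epsilon|Z_k-\mathbb{E}^{\{\sim k\}}Z_k|^q}\le K$. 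The resulting $\tilde C_f$ is thus expressed through $\nu f^q$, $\nu|\nabla f|^q$, and the entropy $\nu(f^q\log(f^q/\nu f^q))$, with no reference to $Q$ and no need for (H1). The point you missed is that at this stage one is allowed a crude bound depending on the entropy of $f$ itself; one does \emph{not} need (and cannot yet use) the log-Sobolev machinery that would convert such a term into $\nu|\nabla f|^q$.
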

\begin{proof}For general $r \in \{k-2,k,k+2\}$ \begin{equation}\label{neweq1ofnewlemma10}\nu\left\vert \nabla_r h_k
\right\vert^q\leq2^{q-1}\nu\left\vert \nabla_r
f
\right\vert^q+2^{q-1}\nu\left\vert \nabla_r
\mathbb{E}^{\{\sim k\}}f
\right\vert^q \end{equation}
since $h_k=f-\mathbb{E}^{\{\sim k\}}f$. For the second term on the right hand side of (\ref{neweq1ofnewlemma10}) we have
 \begin{align}\label{neweq2ofnewlemma10}\nu\left\vert \nabla_r
\mathbb{E}^{\{\sim k\}}f
\right\vert^q\leq & 2^{q-1}\nu\left\vert \nabla_{r} f
\right\vert^q+J^q2^{q-1}\nu\left\vert\mathbb{E}^{\{\sim k\}}(f;Z_k)\right\vert^q \end{align}
where $$Z_k=\nabla_{k-2}V(x_{k-2},x_{k-1})\mathcal{I}_{r=k-1}+\nabla_{k+2}V(x_{k+2},x_{k+1})\mathcal{I}_{r=k+1}+W_{k}\mathcal{I}_{r=k}$$
where $W_k$ as in (\ref{defineW_k}). We will now compute the last term on the right hand side of (\ref{neweq2ofnewlemma10})
\begin{align*} \nu\left\vert\mathbb{E}^{\{\sim k\}}(f;Z_k)\right\vert^q =&\nu\left\vert\mathbb{E}^{\{\sim k\}}(f-\mathbb{E}^{\{\sim k\}}f)(Z_{k}-\mathbb{E}^{\{\sim k\}}Z_k)\right\vert^q \\ =&\nu\left\vert\mathbb{E}^{\{\sim k\}}\left(f(Z_{k}-\mathbb{E}^{\{\sim k\}}Z_k)\right)\right\vert^q  \leq \nu f^{q}\vert Z_{k}-\mathbb{E}^{\{\sim k\}}Z_k\vert^q\end{align*}
If we use the entropic inequality (\ref{3.3}) we obtain
\begin{align} \label{neweq3ofnewlemma10}\nu\left\vert\mathbb{E}^{\{\sim k\}}(f;Z_k)\right\vert^q \leq\frac{1}{\epsilon} &\nu f^q\log\frac{f^q}{\nu f^q}+\frac{1}{\epsilon}\nu f^q\log \nu e^{\epsilon\vert Z_{k}-\mathbb{E}^{\{\sim k\}}Z_k\vert^q}\nonumber\\ \leq\frac{1}{\epsilon} &\nu f^q\log\frac{f^q}{\nu f^q}+\frac{K}{\epsilon}\nu f^q\end{align}
   where $K$ as in (H2). If we combine (\ref{neweq1ofnewlemma10}), (\ref{neweq2ofnewlemma10}) and (\ref{neweq3ofnewlemma10})  we get
that for $r \in \{k-2,k,k+2\}$
\begin{align}\label{neweq3-4ofnewlemma10} \nu\left\vert \nabla_r h_k
\right\vert^q\leq2^{q}\nu\left\vert \nabla_r
f
\right\vert^q+\frac{J^q 2^{2q-2}}{\epsilon} \nu f^q\log\frac{f^q}{\nu f^q}+\frac{J^q 2^{2q-2}}{\epsilon}\nu f^q
\end{align}
For $r \notin \{k-2,k,k+2\}$ we have 
\begin{equation}\label{neweq3-4+1ofnewlemma10}\nu\left\vert \nabla_r h_k
\right\vert^q\leq2^{q} \nu f^q
\end{equation}
From (\ref{neweq3-4ofnewlemma10}) and (\ref{neweq3-4+1ofnewlemma10}) the lemma follows since  functions $f$ are as in Remark \ref{newremark1}. 
 \end{proof} 
\begin{lemma}\label{newlemmasection5}If (H2) is satisfied, then for any $k\in \mathbb{N}$$$\sup_{n\in\mathbb{N}}\sum_{dist(u,k)=n} Q(u,k)<C_{f}<\infty$$where $C_f$ is a constant which depends on the function $f$ but not on $u$ and $k$. \end{lemma}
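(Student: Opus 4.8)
The plan is to prove the stronger, fully uniform bound $\sup_{u,k}Q(u,k)\le C_f<\infty$. Since the underlying lattice is $\mathbb{Z}$, the set $\{u:dist(u,k)=n\}$ consists of at most the two points $k\pm n$, so a bound on each $Q(u,k)$ not depending on $u$ or $k$ immediately gives $\sup_n\sum_{dist(u,k)=n}Q(u,k)\le 2C_f$. I would estimate $Q(u,k)$ by unfolding the decomposition used in the proof of Lemma~\ref{lem5.3},
$$Q(u,k)=\sum_{r\in\Lambda(u)}\nu_{\Lambda(u)}\left\vert\nabla_r(\mathbb{E}^{M(u)}\vert h_k\vert^q)^{\frac1q}\right\vert^q ,\qquad h_k=f-\mathbb{E}^{\{\sim k\}}f ,$$
but — and this is the point — without re-introducing any of the quantities $Q(t,k)$; it is exactly this re-introduction (via Lemma~\ref{lem5.1}) that turns Lemma~\ref{lem5.3} into a recursion, and it must be avoided here in order to get an a priori estimate.

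For the three interior indices $r=u-1,u,u+1$, relation~\eqref{5.15} bounds the corresponding summand by $\nu\vert\nabla_r h_k\vert^q$, and by Lemma~\ref{newlemma0section5} this is $\le\tilde C_f$ uniformly in $r,k$. For the two boundary indices $r=u\pm2$, relation~\eqref{5.16} together with the H\"older step from~\eqref{5.4} (using that $\mathbb{E}^{M(u)}=\mathbb{E}^{M_{u-3}(u)}\otimes\mathbb{E}^{M_{u+3}(u)}$, Remark~\ref{remarkMs(k)}) gives
$$\nu_{\Lambda(u)}\left\vert\nabla_r(\mathbb{E}^{M(u)}\vert h_k\vert^q)^{\frac1q}\right\vert^q\le c_1\nu\vert\nabla_r h_k\vert^q+\frac{J^qc_1}{q^q}\,\nu\big(\vert h_k\vert^q Y_s^q\big),\qquad Y_s=\vert\nabla_rV(x_s,x_r)-\mathbb{E}^{M_s(u)}\nabla_rV(x_s,x_r)\vert ,$$
with $s=u\pm3$; the first summand is again $\le c_1\tilde C_f$ by Lemma~\ref{newlemma0section5}, so the whole problem reduces to bounding $\nu(\vert h_k\vert^q Y_s^q)$ uniformly in $u$ and $k$.

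This I would obtain from the entropy inequality~\eqref{3.3} for the measure $\nu$: taking $y=\vert h_k\vert^q/\nu\vert h_k\vert^q$, $t=\epsilon$, and using that $Y_s^q\le(\vert\nabla_rV\vert+\mathbb{E}^{M_s(u)}\vert\nabla_rV\vert)^q$ so that $\nu e^{\epsilon Y_s^q}\le e^K$ by (H2) (Remark~\ref{rem2.1n}), one gets
$$\nu\big(\vert h_k\vert^q Y_s^q\big)\le\frac1\epsilon\,Ent_\nu\big(\vert h_k\vert^q\big)+\frac K\epsilon\,\nu\vert h_k\vert^q ,\qquad Ent_\nu(\psi):=\nu\big(\psi\, log\,\tfrac{\psi}{\nu\psi}\big).$$
Here $\nu\vert h_k\vert^q=\nu\mathbb{E}^{\{\sim k\}}\vert f-\mathbb{E}^{\{\sim k\}}f\vert^q\le\hat c\,\nu\vert\nabla f\vert^q$ by the Spectral Gap inequality for $\mathbb{E}^{\{\sim k\}}$ (hypothesis (H0) and Remark~\ref{rem1.1}, exactly as in~\eqref{3.6}), uniformly in $k$. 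Finally $Ent_\nu(\vert h_k\vert^q)$ is bounded uniformly in $k$: writing $\vert h_k\vert^q\le 2^{q-1}(\vert f\vert^q+\mathbb{E}^{\{\sim k\}}\vert f\vert^q)=:\Psi_k$ pointwise, the elementary inequality $Ent_\nu(\psi)\le Ent_\nu(\Psi)+\tfrac2e\nu\Psi$ valid for $0\le\psi\le\Psi$, the subadditivity and homogeneity of $Ent_\nu$, and the contraction $Ent_\nu(\mathbb{E}^{\{\sim k\}}\vert f\vert^q)\le Ent_\nu(\vert f\vert^q)$ (Jensen for the conditional expectation $\mathbb{E}^{\{\sim k\}}$, legitimate by DLR) together reduce it to a fixed multiple of $Ent_\nu(\vert f\vert^q)+\nu\vert f\vert^q$, which is finite for the admissible functions of Remark~\ref{newremark1} — the same finiteness already used in Lemma~\ref{newlemma0section5}. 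Summing the five contributions then yields $Q(u,k)\le(3+2c_1)\tilde C_f+\tfrac{2J^qc_1}{q^q}\big(\tfrac1\epsilon Ent_\nu(\vert h_k\vert^q)+\tfrac K\epsilon\nu\vert h_k\vert^q\big)=:C_f$, independent of $u$ and $k$, and the lemma follows.

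The step I expect to be the crux — and the reason one cannot simply quote Lemma~\ref{lem5.3} or Lemma~\ref{lem5.6}(a) — is the uniform control of $\nu(\vert h_k\vert^q Y_s^q)$. Applying instead the logarithmic Sobolev inequality for $\nu_{\Lambda(t)}$ from (H1) to the localised entropy $Ent_{\nu_{\Lambda(t)}}(\mathbb{E}^{M(t)}\vert h_k\vert^q)$, as is done in Lemma~\ref{lem5.1}, would re-involve $Q(t,k)$ and leave only the two-sided recursion $\sigma_n\le A+\lambda(\sigma_{n-4}+\sigma_{n+4})$ for $\sigma_n=\sum_{dist(u,k)=n}Q(u,k)$; such a recursion, even with $\lambda$ small and every $\sigma_n$ finite, does not by itself force $\sup_n\sigma_n<\infty$, since it admits exponentially growing supersolutions. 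The way around this is precisely to dominate the localised entropy directly by $Ent_\nu(\vert h_k\vert^q)$ via the contraction property of entropy under conditional expectation, which breaks the recursion and produces a genuine a priori bound.
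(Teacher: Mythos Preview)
Your argument is correct and follows essentially the same route as the paper: bound the interior indices by~\eqref{5.15}, the boundary indices by~\eqref{5.16} plus H\"older, and then control the resulting weighted moment $\nu(\vert h_k\vert^q Y_s^q)$ by the entropic inequality~\eqref{3.3} applied to $\nu$ itself (rather than to $\nu_{\Lambda(t)}$), using contraction of entropy under $\mathbb{E}^{\{\sim k\}}$ to obtain a $k$--independent bound --- the paper splits $\vert h_k\vert^q\le 2^{q-1}(\vert f\vert^q+\mathbb{E}^{\{\sim k\}}\vert f\vert^q)$ \emph{before} the entropic inequality rather than after, but the substance is identical. One small remark: the lemma as stated assumes only (H2), so instead of invoking (H0) for $\nu\vert h_k\vert^q\le\hat c\,\nu\vert\nabla f\vert^q$ you can simply use $\nu\vert h_k\vert^q\le 2^{q}\nu\vert f\vert^q$ from Jensen, which keeps the proof within the stated hypothesis.
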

\begin{proof}
Since we work on the one dimensional lattice, it  is sufficient to show that
$$\sup_{n\in\mathbb{N}} Q(u,k)<C'_{f}<\infty$$
 for $C'_f$ depends only on the functions $f$. To  compute $Q(u,k)$ we can use  (\ref{5.14}) and (\ref{5.15}) to obtain\begin{align}\label{neweq4ofnewlemma10}Q(u,k)\leq\sum_{r= u-1,u,u+1}\nu\left\vert \nabla_r
h_k
\right\vert^q
+\sum_{r= u-2,u+2}\nu_{\Lambda(u)}\left\vert \nabla_{r} (\mathbb{E}^{M(u)}
\vert h_k\vert^q)^{\frac{1}{q}}
\right\vert^q \end{align}
 Furthermore, from (\ref{5.16}) for $r=u-2,u+2$ we have  \begin{align}\label{neweq6ofnewlemma10}\nu_{\Lambda(u)}&\left\vert \nabla_{r} (\mathbb{E}^{M(u)}
\vert h_k\vert^q)^{\frac{1}{q}}
\right\vert^q\leq c_1\nu\left\vert \nabla_{r} h_{k}
\right\vert^q+
 \frac{J^qc_1}{q^q}I_0
\end{align} 
where $$I_0 :=\nu_{\Lambda(u)}\left(\mathbb{E}^{M(u)}\vert h_k\vert^{q}\right)^{-\frac{q}{p}}\left(\mathbb{E}^{M(u)}(\vert h_k\vert^q;\nabla_rV(x_{r},x_{s}))\right)^q
 \mathcal{I}_{s\in \{u-3,u+3\}:s\sim r}$$ 
 In order to bound  the second term on the right hand side of (\ref{neweq6ofnewlemma10})
we compute 
\begin{align*}\mathbb{E}^{M(u)}&(\vert h_k\vert^q;\nabla_rV(x_{r},x_{s}))=\mathbb{E}^{M(u)}\left(\vert h_k\vert^{(q-1)+1}\left(\nabla_rV(x_{r},x_{s})-\mathbb{E}^{M(u)}\nabla_rV(x_{r},x_{s})\right)\right)\\&\leq\left(\mathbb{E}^{M(u)}\vert h_k\vert^{pq-p}\right)^{\frac{1}{p}} \left( \mathbb{E}^{M(u)}\left(\vert h_k\vert^{q}\left\vert\nabla_rV(x_{r},x_{s})-\mathbb{E}^{M(u)}\nabla_rV(x_{r},x_{s})\right\vert^q\right) \right)^\frac{1}{q}
\end{align*}
From the last bound, since $p$ and $q$ are conjugate, we get \begin{align*}I_{0}& \leq\nu_{\Lambda(u)}\mathbb{E}^{M(u)}\left(\vert h_k\vert^{q}\left\vert\nabla_rV(x_{r},x_{s})-\mathbb{E}^{M(u)}\nabla_rV(x_{r},x_{s})\right\vert^q\right)
 \mathcal{I}_{s\in \{u-3,u+3\}:s\sim r}\\&=\nu(\vert h_k\vert^{q}N_r)\leq 2^{q-1}\nu (f^{q}N_r)+2^{q-1}\nu( (\mathbb{E}^{\{\sim k\}} f^{q})N_r)\end{align*}
 where above we denoted $N_{r}=\left\vert\nabla_rV(x_{r},x_{s})-\mathbb{E}^{M(u)}\nabla_r V(x_{r},x_{s})\right\vert^q
 \mathcal{I}_{s\in \{u-3,u+3\}:s\sim r}$. If we use again the entropic inequality (\ref{3.3}) we obtain
 \begin{align}\nonumber\label{neweq7ofnewlemma10}I_{0}\nonumber \leq&\frac{2^{q-1}   }{\epsilon}\nu f^{q}\log \frac{f^{q}}{\nu f^{q}}+\frac{2^{q-1}   }{\epsilon}\nu f^{q}\log \nu e^{\epsilon N_r}+\frac{2^{q-1}   }{\epsilon}\nu\mathbb{E}^{\{\sim k\}} f^{q}\log\frac{ \mathbb{E}^{\{\sim k\}} f^{q}}{\nu \mathbb{E}^{\{\sim k\}} f^{q}}\\ \nonumber &+\frac{2^{q-1}   }{\epsilon}\log \nu e^{\epsilon N_r}\nu \mathbb{E}^{\{\sim k\}} f^{q}\\ \leq& \frac{2^{q-1}   }{\epsilon}\nu f^{q}\log \frac{f^{q}}{\nu f^{q}}+\frac{2^{q} K  }{\epsilon}\nu f^{q}+\frac{2^{q-1}   }{\epsilon}\nu\mathbb{E}^{\{\sim k\}} f^{q}\log\frac{ \mathbb{E}^{\{\sim k\}} f^{q}}{\nu \mathbb{E}^{\{\sim k\}} f^{q}}\end{align}
where $K$ as in (H2).
For the last term on the right hand side of (\ref{neweq7ofnewlemma10}) we can  write
\begin{align}\label{neweq7+1ofnewlemma10}\nu\mathbb{E}^{\{\sim k\}} f^{q}\log\frac{ \mathbb{E}^{\{\sim k\}} f^{q}}{\nu \mathbb{E}^{\{\sim k\}} f^{q}}=\nu f^{q}\log\frac{ \mathbb{E}^{\{\sim k\}} f^{q}}{\nu \mathbb{E}^{\{\sim k\}} f^{q}}\leq\nu f^{q}\log\frac{ f^{q}}{\nu f^{q}}
\end{align}
Combining together (\ref{neweq7ofnewlemma10}) and (\ref{neweq7+1ofnewlemma10})
 we obtain \begin{align}\label{neweq8ofnewlemma10} I_{0}\leq\frac{2^{q}   }{\epsilon}\nu f^{q}\log \frac{f^{q}}{\nu f^{q}}+\frac{2^{q} K  }{\epsilon}\nu f^{q}
\end{align}
From (\ref{neweq6ofnewlemma10}), and     (\ref{neweq8ofnewlemma10}) we then get that for $r=u-2,u+2$   \begin{align}\label{neweq9ofnewlemma10}\nu_{\Lambda(u)}\left\vert \nabla_{r} (\mathbb{E}^{M(u)}
\vert h_k\vert^q)^{\frac{1}{q}}
\right\vert^q\leq c_1\nu\left\vert \nabla_{r} h_{k}
\right\vert^q+
 \frac{J^q2^{q}c_1}{q^q\epsilon}\nu f^{q}\log \frac{f^{q}}{\nu f^{q}}+\frac{J^qc_12^{q} K  }{q^{q}\epsilon}\nu f^{q}
\end{align} 
If we combine  (\ref{neweq4ofnewlemma10}) and (\ref{neweq9ofnewlemma10}) together with Lemma \ref{newlemma0section5}
 we conclude that   for any function $f$ there is a bound of $\nu_{\Lambda(u)}\left\vert \nabla_{r} (\mathbb{E}^{M(u)}
\vert h_k\vert^q)^{\frac{1}{q}}
\right\vert^q$ uniformly with respect to the set $M(u)$ depending only on $\nu f^{q}$, $\max_{i\in \mathbb{Z}}\nu \left \vert \nabla _i f \right\vert^q$ and  $\nu f^{q}\log \frac{f^{q}}{\nu f^{q}}$.

\end{proof}
 We can now prove Lemma~\ref{lem5.5}.

 ~

 \noindent
\textit{\textbf{Proof of Lemma~\ref{lem5.5}.}} For every $u$ s.t. $dist(u,k)\geq8$ define  
  \begin{align*}G(u,k):=& c_1\nu_{\Lambda(u)}\left\vert \nabla_u
f
\right\vert^q+c_1\sum_{r= u-1,u+1}\nu_{}\left\vert \nabla_r
f
\right\vert^q \\ &
+c^2_1\sum_{r=u-2,u+2}\nu_{\Lambda(u)}\left\vert \nabla_r f
\right\vert^q+\frac{J^qc_12K}{\epsilon}\nu\left\vert f-\mathbb{E}^{\{\sim k\}}f
\right\vert^q\end{align*}
 and for every $u$ s.t. $dist(u,k)=4$ define
\begin{align*} G(u,k):=&c_1\nu\left\vert \nabla_{u}
f
\right\vert^q+c_1\sum_{r= u-1,u+1}\nu\left\vert \nabla_u
f
\right\vert^q+\frac{J^qCc^2_1}{\epsilon}Q(k,k)\\ &
 +c^2_1\sum_{i=u-2,u+2}\nu\left\vert \nabla_r f
\right\vert^q+J^q\left(\frac{c_12K}{\epsilon}+\frac{c^2_1K}{\epsilon }\right)\nu\left\vert f-\mathbb{E}^{\{\sim k\}}f
\right\vert^q\end{align*}
If we set
$K'=\frac{c_{1}C}{ \epsilon}$, then from Lemma~\ref{lem5.6} (a) and (b) respectively
we can write
\begin{equation} \label{5.28}Q(u,k)\leq G(u,k)+ J^qK'\sum_{dist(u,t)=4}Q(t,k)  \text{,
\; for } dist(u,k)\geq8\end{equation}
and
\begin{equation}
\label{5.29}Q(u,k)\leq G(u,k)+ J^qK'Q(t,k)\mathcal{I}_{dist(t,u)=4,t\neq k} \text{, \;
for } dist(u,k)=4 \end{equation}
 From equation~\eqref{5.28} we obtain
  $$\sum_{dist(u,k)=n} Q(u,k)\leq \sum_{dist(u,k)=n}G(u,k)+ J^qK'\sum_{dist(u,k)=n}\sum_{dist(t, u)=4}Q(t,k)$$
  or equivalently
 \begin{align*}\sum_{dist(u,k)=n} Q(u,k)\leq &\sum_{dist(u,k)=n}G(u,k)+ J^qK'\sum_{dist(t,k)=n+4}Q(t,k)\\
 &  +J^qK'\sum_{dist(t,k)=n-4}Q(t,k)\end{align*}
 which implies
\begin{equation}\label{5.30}\tilde{Q}(n)\leq \tilde{G}(n)+J^q K'\tilde{Q}(n-4)+J^qK'\tilde{Q}(n+4)\end{equation}
where we denote $$\tilde{Q}(n)=\sum_{dist(u,k)=n} Q(u,k)\text{\; and \;}\tilde{G}(n)= \sum_{dist(u,k)=n}G(u,k)$$
While from equation~\eqref{5.29}, we have
 $$\sum_{dist(u,k)=4} Q(u,k)\leq \sum_{dist(u,k)=4}G(u,k)+ J^qK'\sum_{dist(u,k)=4}Q(t,k)\mathcal{I}_{dist(t,u)=4,t\neq k}$$
 This implies
 $$\sum_{dist(u,k)=4} Q(u,k)\leq \sum_{dist(u,k)=4}G(u,k)+ J^qK'\sum_{dist(t,k)=8}Q(t,k)$$
 which is equivalent to
 \begin{equation}\label{5.31}\tilde{Q}(4)\leq \tilde{G}(4)+J^qK'\tilde{Q}(8)  \end{equation}
 Choose  $J$ in (H3)
sufficiently small such that hypothesis~\eqref{l6-5.8} of Lemma~\ref{lem5.2} is satisfied. Then, since  relationships~\eqref{5.30}, ~\eqref{5.31} and Lemma \ref{newlemmasection5} are true, the conditions of Lemma~\ref{lem5.2}  are satisfied  for $P=\tilde{ Q}$ and  $G=\tilde{ G}$ and so we  obtain
$$\nonumber \tilde{Q}(4)\leq \hat{ J}\sum_{n=0}^{+\infty} J^{nq-n}\tilde{G}(4n+4)$$
where $\hat{ J}=\frac{1}{1-J^{2q-2}}$. This is equivalent to
\begin{align}  \sum_{t:dist(t,k)=4}Q(t,k)\leq &\hat{ J} \sum_{dist(u,k)=4}G(u,k)\nonumber\\
  &   \label{5.32}+  \hat{ J}\sum_{n=1}^{+\infty} J^{nq-n}\sum_{dist(u,k)=4n+4}G(u,k)\end{align}
  Substituting $G(u,k)$     leads to
 \begin{align}\nonumber\sum_{t:dist(t,k)=4}Q&(t,k)\leq\frac{J^qCc^{2}_{1}}{\epsilon }\hat J \sum_{dist(u,k)=4}Q(k,k)\\  &
 \nonumber + \hat{ J}c_1\sum_{n=0}^{+\infty} J^{nq-n}\sum_{dist(u,k)=4n+4} \nu_{\Lambda(u)}\left\vert \nabla_{u}
f
\right\vert^q\\  &
 + \hat{ J}c_1\sum_{n=0}^{+\infty} J^{nq-n}\sum_{dist(u,k)=4n+4}\sum_{r= u-1,u+1}\nu_{\Lambda(u)}\left\vert \nabla_{r}
f
\right\vert^q\nonumber\\ \nonumber  &
 +\hat{ J}c^2_1\sum_{n=0}^{+\infty} J^{nq-n}\sum_{dist(u,k)=4n+4}\sum_{r= u-2,u+2}\nu_{\Lambda(u)}\left\vert \nabla_{r} f
\right\vert^q\\  &
 \label{5.33}+J^q\hat J\frac{c_1K}{\epsilon}(c_{1}+2)\sum_{n=0}^{+\infty} J^{nq-n}\sum_{dist(u,k)=4n+4}\nu\left\vert f-\mathbb{E}^{\{\sim k\}}f
\right\vert^q\end{align}
But for   $J$   in  (H3)  we have    $J^{q-1}<1$ which implies $\tilde{J}=\sum_{n=0}^{+\infty} J^{nq-n}<\infty$. ~\eqref{5.33} then implies

~

$\sum_{t:dist(t,k)=4}Q(t,k)\leq\frac{J^qCc^{2}_{1}}{\epsilon}2\hat{ J} Q(k,k)$

~

$ \ \ \ \ \  \ \ \ \ +\hat{J}c_1\tilde{J}\sum_{dist(u,k)=4n+4}\sum_{r= u-1,u,u+1}\nu\left\vert \nabla_{r}
f
\right\vert^q$

~

$ \ \ \ \ \ \ \ \ \  +\hat{J}c^2_1\tilde{J}\sum_{dist(u,k)=4n+4}\sum_{r= u-2,u+2}\nu\left\vert \nabla_{r} f
\right\vert^q$

~

$ \ \ \ \ \ \ \ \ +J^{q}\frac{c_1K}{\epsilon}(c_{1}+2)\hat{ J}\frac{2}{1-J^{q-1}}\nu\left\vert f-\mathbb{E}^{\{\sim k\}}f
\right\vert^q $
 
~
 
\noindent and the lemma follows for appropriate constant $T>0$. \qed
 \section{Conclusion}
 In the present work,  we have determined conditions
 for the infinite volume Gibbs measure  to satisfy the Log-Sobolev
 Inequality. As explained in the introduction, the   criterion presented
 in Theorem~\ref{thm2.1} can     in particular be applied in the case of local specifications $\{\mathbb{E}^{\Lambda,\omega}\}_{\Lambda\subset\subset \mathbb{Z} ,\omega \in
\Omega}$    with   no quadratic interactions for which $$\left\Vert \nabla_i \nabla_j V(x_i,x_j) \right\Vert_{\infty}=\infty$$
Thus, we have shown that our results  can
go    beyond the usual uniform boundness of the second derivative of the interactions considered in  [Z1], [Z2],  [M] and [O-R].

Concerning the additional conditions (H1) and (H2) placed
here to handle the exotic interactions, they refer to finite dimensional measures with no boundary conditions which
are easier to handle than the  $\{\mathbb{E}^{\Lambda,\omega}\}_{\Lambda\subset\subset \mathbb{Z} ,\omega \in
\Omega}$  measures or the
infinite dimensional Gibbs measure $\nu$. 

In fact, the following results concerning the conditions can be proven. This is a work in progress that will consist the material of a forthcoming paper. \begin{proposition}\label{conclProp}The hypothesis (H0), (H3) and (H2) imply hypothesis (H1). \end{proposition}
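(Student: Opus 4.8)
The plan is to reformulate $(H1)$ as a statement about $\nu$ and then to prove it by a finite-range version of the iterative scheme of Section~2 carried out without ever invoking $(H1)$. Since a differentiable $f$ localised in $\Lambda(k)$ satisfies $\nu_{\Lambda(k)}(f^q\log(f^q/\nu_{\Lambda(k)}f^q))=\nu(f^q\log(f^q/\nu f^q))$ and $|\nabla_{\Lambda(k)}f|^q=|\nabla f|^q$, hypothesis $(H1)$ is equivalent to the inequality $\nu(f^q\log(f^q/\nu f^q))\le C\,\nu|\nabla f|^q$ for every $f$ supported in $\Lambda(k)$; by Remark~\ref{newremark1} one may use throughout the a priori finiteness of $\nu f^q$, of $\nu|\nabla f|^q$ and of the entropy itself, exactly as in Lemmas~\ref{newlemma0section5} and~\ref{newlemmasection5}.

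First I would decompose $\mathrm{Ent}_\nu(f^q)$ by the identity $\mathrm{Ent}_\nu(g)=\nu\,\mathrm{Ent}_{\mathbb{E}^{\Delta}}(g)+\mathrm{Ent}_\nu(\mathbb{E}^{\Delta}g)$, with $\Delta$ taken alternately to be the even and the odd sites of an expanding window $\{k-2-m,\dots,k+2+m\}$. By the DLR equation each $\mathbb{E}^{\Delta}$ is a genuine $\nu$-conditional expectation, and since the sites of $\Delta$ are pairwise non-adjacent it factorises into single-site measures, hence satisfies $LS_q$ with constant $c$ by $(H0)$ and Remark~\ref{rem1.1}; after $n$ sweeps this gives $\mathrm{Ent}_\nu(f^q)\le c\sum_{m<n}\nu|\nabla_{\Delta_m}g_m^{1/q}|^q+\mathrm{Ent}_\nu(g_n)$, where $g_m=\mathbb{E}^{\Delta_{m-1}}\cdots\mathbb{E}^{\Delta_0}f^q$. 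Each gradient term is expanded as in~\eqref{3.8} into a ``diagonal'' part $\propto\nu|\nabla_if|^q$, which is nonzero only for the finitely many $i\in\Lambda(k)$, and a ``leakage'' part which is a $q$-th power of a covariance $\mathbb{E}^{\{j\}}(\,\cdot\,;\nabla_iV)$ carrying a factor $J^q$ from $(H3)$. After subtracting the relevant conditional means (so that the residual $\nu$-averages become conditional variances, bounded by gradients through the Spectral Gap that $(H0)$ yields via Remark~\ref{rem1.1}), these covariances are estimated without $(H1)$ precisely as in Lemma~\ref{newlemma0section5}: combine the pointwise bound $|\mathbb{E}^{\{j\}}(G(\nabla_iV-\mathbb{E}^{\{j\}}\nabla_iV))|^q\le\nu(G|\nabla_iV-\mathbb{E}^{\{j\}}\nabla_iV|^q)$, the entropy inequality~\eqref{3.3}, and the exponential bound $\nu e^{\epsilon|\nabla_iV-\mathbb{E}^{\{j\}}\nabla_iV|^q}\le e^{K}$ coming from $(H2)$ and Jensen (Remark~\ref{rem2.1n}).

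Two points remain: the convergence of the sum over $m$ and the vanishing of the residual $\mathrm{Ent}_\nu(g_n)$ as $n\to\infty$. Both need geometric decay, that is, an $(H1)$-free analogue of Proposition~\ref{prp2.2} with contraction constant below one and an $(H1)$-free analogue of Proposition~\ref{prp2.3} ($g_n\to\nu f^q$ $\nu$-a.e.). For $J$ small such a contraction should follow from a Dobrushin-type bound on the single-site conditionals rather than from $(H1)$, while the $(H2)$-only estimates of Lemmas~\ref{newlemma0section5} and~\ref{newlemmasection5} supply the uniform finiteness that upgrades ``contraction plus bounded'' to geometric decay; granting this, the $J^q$-weighted resummations of Lemmas~\ref{lem5.2-1} and~\ref{lem5.2} close the estimate and give $\mathrm{Ent}_\nu(f^q)\le C_1\,\nu|\nabla f|^q+C_2\,\mathrm{Ent}_\nu(f^q)$ with $C_2<1$, hence $(H1)$ with $C=C_1/(1-C_2)$. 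The main obstacle is exactly this: breaking the circularity by re-proving the contraction estimates of Sections~3--5 from $(H0)$, $(H2)$, $(H3)$ alone. The natural strategy is to first establish the Spectral Gap inequality for $\nu$ from $(H0)$, $(H2)$, $(H3)$ --- a strictly weaker statement, available for phases beyond convexity at infinity by the methods of [G-R] --- and then bootstrap to the Log-Sobolev inequality along the scheme above; alternatively one may attempt a perturbation-theoretic argument, writing $\nu_{\Lambda(k)}$ as an unbounded perturbation, with exponential-moment control furnished by $(H2)$, of a product reference measure whose $LS_q$ inequality comes from $(H0)$.
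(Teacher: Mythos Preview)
The paper does not actually prove this proposition: it is announced in Section~6 as ``work in progress that will consist the material of a forthcoming paper,'' and the only information given is the heuristic that ``the main idea of the proof \ldots\ follows in main lines the method followed in the current paper'' together with the remark that ``the fact that \ldots\ the Gibbs measure is localised and thus the approximation procedure starts from a finite set compensates for the loss of the LSq for $\nu_{\Lambda(i)}$.'' So there is no proof in the paper to compare against, only this one-sentence hint.

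Your outline is in the same spirit as that hint: you correctly reformulate $(H1)$ as the $LS_q$ inequality for $\nu$ restricted to functions localised in $\Lambda(k)$, and you propose to rerun the iterative scheme of Sections~2--5 on such functions while replacing every use of $(H1)$ by $(H0)$/$(H2)$-only estimates of the type in Lemmas~\ref{newlemma0section5} and~\ref{newlemmasection5}. This is exactly the route the paper gestures at. The paper's additional observation --- that localisation means the approximation ``starts from a finite set'' --- is the point you only partially exploit: because $f$ lives on five sites, the cascade of $Q(u,k)$ terms in Section~5 terminates after finitely many steps in each direction rather than producing an infinite recursion, and this finiteness is what should let you close the estimate without the $(H1)$-based Log-Sobolev step in~\eqref{3.5} and in Lemma~\ref{lem5.1}.

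That said, your write-up is a plan rather than a proof, and you are candid about this: the phrase ``granting this'' at the contraction step, the appeal to an unproved ``Dobrushin-type bound,'' and the two fallback strategies (Spectral Gap first via [G-R], or a perturbation argument) all flag the same gap --- you have not shown how to obtain the geometric decay of Proposition~\ref{prp2.2} without $(H1)$. Since the paper itself defers the argument to a sequel, this is not a defect relative to the paper; but as a self-contained proof it is incomplete at precisely the point you identify.
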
 
Consequently,    the main result of Theorem \ref{thm2.1} is then reduced to the following  \begin{theorem} \label{conclTheor}If hypothesis (H0), (H3) and (H2) are satisfied, then the infinite dimensional Gibbs measure $\nu$  for the local specification $\{\mathbb{E}^{\Lambda,\omega}\}_{\Lambda\subset\subset \mathbb{Z},\omega \in
\Omega}$ satisfies the $q$ Log-Sobolev inequality
$$\nu \left\vert f\right\vert^q log\frac{\left\vert f\right\vert^q}{\nu \left\vert f\right\vert^q}\leq \mathfrak{C} \ \nu \left\vert \nabla f
\right\vert^q$$                              
for some positive constant $\mathfrak{C}$ independent of $f$. \end{theorem}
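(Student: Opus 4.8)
The plan is to obtain Theorem~\ref{conclTheor} as an immediate consequence of Proposition~\ref{conclProp} together with Theorem~\ref{thm2.1}. First I would invoke Proposition~\ref{conclProp}: assuming (H0), (H3) and (H2), it furnishes hypothesis (H1), that is, the restriction $\nu_{\Lambda(k)}$ of the Gibbs measure to $\Sigma_{\Lambda(k)}$ satisfies the Log-Sobolev-$q$ inequality with a finite constant $C$. Once (H1) is available, all four hypotheses (H0)--(H3) are in force, so Theorem~\ref{thm2.1} applies verbatim and yields
$$\nu \left\vert f\right\vert^q \log\frac{\left\vert f\right\vert^q}{\nu \left\vert f\right\vert^q}\leq \mathfrak{C}\ \nu \left\vert \nabla f\right\vert^q$$
for every $f$ as in Remark~\ref{newremark1}. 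Thus the body of the argument is a one-line deduction; the only point requiring more than a bare citation is the extra clause, absent from Theorem~\ref{thm2.1}, that $\mathfrak{C}$ can be taken independent of $f$.

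For that clause I would trace the constant through the proof of Theorem~\ref{thm2.1}, where it was produced as $\mathfrak{C}=\max\{cA(C_1/C_2+C_2+C_1),\,cA\}$ with $c$ the single-site Log-Sobolev constant of (H0), $A=\sum_{k\ge 0}C_2^{2k}<\infty$ since $C_2<1$, and $C_1\in(0,\infty)$, $0<C_2<1$ the structural constants supplied by Proposition~\ref{prp2.2}. Chasing these back through Proposition~\ref{prp2.2}, Lemmata~\ref{lem3.2}, \ref{lem4.2}, \ref{lem4.3}, \ref{lem3.1} and the auxiliary estimates of Section~5, one sees that each of $C_1,C_2$ is expressed solely in terms of $q$, the constant $c$ (hence $\hat c=4c/\log 2$), the smallness parameter $J<1$ from (H3), the constants $C$, $K$, $\epsilon$ of (H1)--(H2), and absolute constants $c_0,c_1,D,\dots$; none of these depends on the particular function $f$, whose only role is to make both sides finite via Remark~\ref{newremark1}. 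Hence $\mathfrak{C}$ is independent of $f$, which completes the deduction of Theorem~\ref{conclTheor}.

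Accordingly, the genuine mathematical difficulty of Theorem~\ref{conclTheor} lies not in this corollary but entirely inside Proposition~\ref{conclProp}, whose proof is deferred to the forthcoming paper; within the present work nothing beyond combining Proposition~\ref{conclProp} with Theorem~\ref{thm2.1} is needed. I expect the main obstacle of that companion argument (not required here) to be showing that the no-boundary finite-volume measure $\nu_{\Lambda(k)}$ inherits a finite Log-Sobolev constant: one would presumably represent $\nu_{\Lambda(k)}$ through the DLR equation as an average of the local specifications $\mathbb{E}^{\Lambda(k),\star}$ over the boundary distribution, use (H0) together with the product and union properties of Remark~\ref{rem1.1} on the disjoint one-dimensional blocks inside $\Lambda(k)$, and then absorb the cross terms by the exponential-moment bounds (H2) along the lines of Lemma~\ref{lem3.1}, with the smallness of $J$ in (H3) being what closes the perturbative estimate. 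But for Theorem~\ref{conclTheor} itself the two cited results suffice.
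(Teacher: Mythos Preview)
Your proposal is correct and matches the paper's own treatment: the paper presents Theorem~\ref{conclTheor} as an immediate consequence of Proposition~\ref{conclProp} together with Theorem~\ref{thm2.1}, with no separate proof given beyond the word ``Consequently.'' Your additional paragraph tracing the independence of $\mathfrak{C}$ from $f$ through the constants of Proposition~\ref{prp2.2} and the lemmata of Sections~3--5 is a reasonable elaboration of a point the paper leaves implicit.
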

 Concerning examples of measures that satisfy
 the above conditions, one can consider measures with phase  $\phi (x)=\vert x\vert^t$ with $t\geq \frac{q}{q-1}$ and interaction
 $V(x,y)=\vert x-y\vert^r$, with    $\max\{r,(r-1)q\}<t$.   
The main idea of the proof of the  Proposition  \ref{conclProp} follows in main lines the method followed in the current paper. Although some of the details are more involved because of the lack of hypothesis (H1), the fact that in Proposition \ref{conclProp}  the Gibbs measure is localised and thus the approximation procedure starts from a finite set  compensates for the loss of the LSq for  $\nu_{\Lambda (i)}$.

In this paper we have been concerned with the $q$ Logarithmic Sobolev inequality  for measures on the 1 dimensional Lattice $\mathbb{Z}$. It is interesting to try to extend the current result to a higher dimensional lattice on $\mathbb{Z}^d,d\geq 2$, although this does not appear to be immediate. In a different direction,    we can consider the  following  class of modified  Logarithmic Sobolev inequalities  presented in [G-G-M]: 
\begin{equation}\label{modeq}\nu \left\vert f\right\vert^2 log\frac{\left\vert f\right\vert^2}{\nu \left\vert f\right\vert^2}\leq  \mathfrak{C}\  \mathfrak{}\ \int  H_{a,c}  \left( \frac{ \nabla f
}{f}\right)f^2d\nu\end{equation}                            
for  some positive constant $\mathfrak{C}$, where  $$ H_{a,c}( x)=\begin{cases}\frac{x^2}{2} & \text{if \ } \vert  x\vert \leq a \\
a^{2-\beta} \frac{\vert x \vert ^{\beta}}{\beta}+a^2\frac{\beta-2}{2\beta}& \text{if \ }\vert x\vert \geq\ a \text{\ and \ }c \neq1\\
+\infty & \text{if \ }\vert x\vert \geq\ a \text{\ and \ }c =1
\end{cases}$$
for $c\in [1,2], a>0$ and $\beta$ satisfying $\frac{1}{c}+\frac{1}{\beta}=1$ ($\beta\geq 2$).
This new class of inequalities is an interpolation between  Log-Sobolev (LS2) and Spectral Gap inequalities (SG2), which retains the basic  properties of the Log-Sobolev inequalities mentioned in Remark \ref{rem1.1}. Some preliminary results suggest that  on $\mathbb{Z}^d,d\geq2$, the infinite dimensional Gibbs measure  satisfies a [G-G-M] type   inequality with $\beta=2q$,   under hypothesis (H0) for LSq ($1< q<  2$) and some  hypothesis stronger than (H2). This is work in early stages, but   hopefully a  modified LS inequality comparable to the [G-G-M] inequalities can be obtained  in the case of  the higher dimensional lattice.

In addition, it is interesting to investigate whether the result presented in this paper can be extended to the family of weaker inequalities presented in [G-G-M], assuming (H0) and (H1) for the (\ref{modeq}) inequality instead of the LSq.  However, this does not seem to be immediate especially in showing the sweeping out relationships and so more work needs to be done towards this direction.

Furthermore, concerning the  hypothesis on the single-site measure,    the main hypothesis (H0) for $\mathbb{E}^{\{i\},\omega}$ can be reduced to the same assumption for the boundary free single-site measure, that
is

~

\noindent \textbf{(H0$'$)}: The single-site measure $\frac{e^{-\phi (x)}dx}{\int e^{-\phi (x)}dx}$ satisfies
the LSq Inequality.

~

 Measures as in ($H0'$) do not involve boundary conditions and for this
reason it is easier to show that they satisfy the Log-Sobolev inequality.
 For instance,  when in $\mathbb{R}$ one can think of
 phases    that are  convex  and increase sufficiently fast, like     $\phi(x)=\left\vert x\right\vert^p$ for $p>2$ (see [B-Z]). In the case of the Heisenberg
   group $\mathbb{}\mathbb{H}$    one can consider $\phi(x)=\beta d(x)^p$
   with $p$ conjugate of $q$ (see [H-Z]).  

However, that does not mean that condition (H0$'$) is in general weaker  than condition
(H0) as there are examples of single-site boundary free
measures $\frac{e^{-\phi (x)}dx}{\int e^{-\phi (x)}dx}$ that do not satisfy
the LS$q$ inequality, which    when perturbed with   interactions, give  new measures
 $\mathbb{E}^{\{i\},\omega}$ that satisfy the Log-Sobolev-q  inequality uniformly on the boundary conditions, that is condition (H0) is satisfied.  In addition, in the case of hypothesis ($H0'$), it seems that the analogues of Proposition \ref{conclProp} and Theorem \ref{conclTheor} will be more to difficult to be shown.

 ~

 \noindent\textbf{Acknowledgements:} The author would like to thank Prof. Boguslaw Zegarlinski for his valuable comments and suggestions.

\bibliographystyle{alpha}

\begin{thebibliography}{Dillo 83}
 
\bibitem[B-J-S]{kn:Gaps1463}{\sc V.  Bach, T. Jecko and J. Sjostrand},
{\em  Correlation Asymptotics of Classical Lattice spin Systems with Nonconvex
Hamilton function at Low Temperature}, Ann. Henri Poincare, 1, 59-100 (2000) 
\bibitem[B-E]{kn:BM462}{\sc D. Bakry and M. Emery},
{\em Diffusions hypercontractives },S\'eminaire de Probabilit\'es XIX, Springer Lecture Notes in Math., 1123, 177-206 (1985) 
 \bibitem[B-HK]{kn:LS1461}{\sc J. Bellisard and R. Hoegn-Krohn},
Compactness and the maximal Gibbs state for random fields on the Lattice, Commun.
Math. Phys., 84, 297-327 (1982). 
  \bibitem[B-G]{kn:Gaps1460}{\sc S.G.  Bobkov and F. Gotze},
{\em Exponential integrability and transportation cost
          related to logarithmic sobolev inequalities
 }, J of Funct Anal., 163,  1-28 (1999)
 
 \bibitem[B-L]{kn:LS2459}{\sc S.G. Bobkov and M. Ledoux},
{\em From Brunn-Minkowski  to Brascamp-Lieb and to Logarithmic Sobolev Inequalities},
Geom. funct. anal., 10, 1028-1052 (2000). 

 
 \bibitem[B-Z]{kn:Gaps2458}{\sc S.G. Bobkov and B. Zegarlinski},
{\em Entropy Bounds and Isoperimetry.} Memoirs of the American Mathematical Society,   176,  1 - 69 (2005)
  
 
 
\bibitem[B-H] {kn:gs457}{\sc T. Bodineau and B. Helffer},
{\em Log-Sobolev inequality for unbounded spin systems }, J of Funct Anal., 166, 168-178 (1999) 
\bibitem[D-S]{kn:Gaps3449}{\sc J.D. Deuschel and D. Stroock},
{\em Large Deviations }, Academic Press, San Diego,
(1989).
 
 \bibitem [D]{kn:LS4448}{\sc R. L. Dobrushin},
{\em The problem of uniqueness of a Gibbs random field and the problem of
phase transition},  Funct.
Anal. Apll., 2, 302-312 (1968).
\bibitem[G-G-M]{kn:Gaps44471}{\sc I. Gentil,  A. Guillin and L. Miclo},
{\em Modified logarithmic Sobolev inequalities and transportation inequalities}, Probab.Theor. Relat. Fields 133, 409-436 (2005).
 
  \bibitem[G-R]{kn:Gaps44472}{\sc I. Gentil and C. Roberto},
{\em Spectral Gaps for Spin Systems: Some
Non-convex Phase Examples}, J. Func.
Anal.,
180, 66-84 (2001). 
\bibitem[G]{kn:Gaps5446}{\sc L. Gross},
{\em Logarithmic Sobolev inequalities}, Am. J. Math., 97, 1061-1083 (1976)
 
 \bibitem[G-Z]{kn:Gaps6445}{\sc A.Guionnet  and B.Zegarlinski},
{\em Lectures on Logarithmic Sobolev Inequalities}, IHP Course 98,  S\'eminaire de Probabilit\'es XXVI, Lecture Notes in
         Mathematics 1801, Springer, 1-134 (2003).
  
 
 
 
 \bibitem[H]{kn:Gaps5446helf}{\sc B. Helffer},
{\em Semiclassical Analysis, Witten Laplacians and Statistical Mechanics}. Partial Differential Equations and Applications. World Scientific, Singapore (2002)

 
   \bibitem[H-Z]{kn:Gaps62001}{\sc W.Hebisch  and B.Zegarlinski},
Coercive inequalities on metric measure spaces. J of Funct Anal. (to appear)   \bibitem[I-P]{kn:LS256200}{\sc J. Inglis and I. Papageorgiou},
{\em Logarithmic Sobolev Inequalities for Infinite Dimensional H\"ormander Type Generators on the Heisenberg Group}. Potential Anal., 31, 79-102 (2009) 
\bibitem[L]{kn:Gaps4119}{\sc M. Ledoux},
{\em Concentration of measure and logarithmic Sobolev inequalities},  S\'eminaire de Probabilit\'es, XXXIII,   Lecture Notes in Math. 1709, Springer-Verlag,   120-216. (1999) 
  \bibitem[M]{kn:semiclassical9118}{\sc  K.Marton},
{\em Logarithmic Sobolev Inequality for Weakly Dependent Random Variables.} (preprint)        
 

\bibitem[M-M]{kn:Gaps2117}{\sc O.  Matte and J. S.  Moller},
{\em On the spectrum of semi-classical Witten-Laplacians and Schrödinger
operators in large dimension}, J of Funct Anal, Volume 220 (2), 243-264 (2005)

 
  \bibitem[O-R]{kn:LSA116}{\sc F. Otto and M. Reznikoff},
{\em A new criterion for the Logarithmic Sobolev Inequality and two Applications }, J. Func.
Anal., 243, 121-157 (2007).

 \bibitem[P]{kn:semiclassical956}{\sc C.J.Preston},{\em Random Fields}, LNM 534, Springer (1976)  \bibitem[R-Z]{kn:LS91114}{\sc C. Roberto and B. Zegarlinski},
{\em Orlicz-Sobolev inequalities for sub-Gaussian measures and ergodicity of Markov semi-groups }, J. Func.
Anal.,
243 (1), 28--66 (2007). 
 
  \bibitem[Y]{kn:lat113}{\sc N.Yoshida},
{\em The log-Sobolev inequality for weakly coupled lattice field}, Probab.Theor. Relat. Fields 115 , 1-40 (1999) 
 
\bibitem[Z1]{kn:decay112}{\sc B. Zegarlinski},
{\em On log-Sobolev Inequalities for Infinite Lattice Systems}, Lett. Math. Phys. 20, 173-182 (1990)
  
 
 
 
\bibitem[Z2]{kn:decay211}{\sc B. Zegarlinski},
{\em The strong decay to equilibrium for the
stochastic dynamics
of unbounded spin systems on a lattice}, Comm. Math.
Phys. 175, 401-432 (1996) 
\end{thebibliography}

\end{document}